
\documentclass[a4paper,11pt]{article}

\usepackage{amsmath,amsthm,amssymb,mathrsfs,latexsym,amsfonts}
\usepackage{graphicx,psfrag,epsfig}
\usepackage{bbm}
\usepackage{a4wide}
\usepackage[english]{babel}
\usepackage[latin1]{inputenc}
\usepackage{authblk}

\numberwithin{equation}{section}

\newtheorem{theorem}{Theorem}[section]
\newtheorem{lemma}[theorem]{Lemma}
\newtheorem{proposition}[theorem]{Proposition}

\newtheorem{corollary}[theorem]{Corollary}
\newtheorem{definition}{Definition\rm}

\newcounter{paraga}[section]

\newcommand{\etoile}{^*}
\newcommand{\N}{\mathbb{N}}
\newcommand{\Z}{\mathbb{Z}}
\newcommand{\Q}{\mathbb{Q}}
\newcommand{\R}{\mathbb{R}}
\newcommand{\C}{\mathbb{C}}

\newcommand{\eps}{\varepsilon}
\newcommand{\al}{\alpha}

\newcommand{\Span}{{\rm Span}}

\newcommand{\lam}{\lambda}

\newcommand{\Lam}{\Lambda}
\newcommand{\calC}{{\mathcal{C}}}
\newcommand{\lamu}{\lambda_1(\calC, \Lam)}
\newcommand{\lamk}{\lambda_k(\calC, \Lam)}
\newcommand{\lamd}{\lambda_d(\calC, \Lam)}
\newcommand{\lamkmuc}{\lambda_k(\mu \calC, \Lam)}
\newcommand{\lamkmulam}{\lambda_k( \calC, \mu\Lam)}

\newcommand{\Lamet}{\Lambda\etoile}
\newcommand{\calCet}{{\mathcal{C}}\etoile}

\newcommand{\lamdpumket}{\lambda_{d+1-k}(\calCet, \Lamet)}

\newcommand{\Lampr}{\Lambda '}
\newcommand{\calCpr}{{\mathcal{C}} ' }
\newcommand{\lamkpr}{\lambda_k(\calCpr, \Lampr)}

\newcommand{\vol}{{\rm vol}}

\newcommand{\und}{\{1,\ldots,d\}}

\newcommand{\pphk}{\Psi_k}

\newcommand{\etoi}{'}
\newcommand{\Psiet}{\Psi\etoi}
\newcommand{\ppsk}{\Psiet_k}
\newcommand{\ppsdmk}{\Psiet_{d+1-k}}

\newcommand{\cc}[1]{\calC(#1) }
\newcommand{\ccpr}[1]{\calC' (#1) }
\newcommand{\ccet}[1]{\calC\etoile(#1) }

\begin{document}

\def\MP{\,{<\hspace{-.5em}\cdot}\,}
\def\SP{\,{>\hspace{-.3em}\cdot}\,}
\def\PM{\,{\cdot\hspace{-.3em}<}\,}
\def\PS{\,{\cdot\hspace{-.3em}>}\,}
\def\EP{\,{=\hspace{-.2em}\cdot}\,}
\def\PP{\,{+\hspace{-.1em}\cdot}\,}
\def\PE{\,{\cdot\hspace{-.2em}=}\,}
\def\N{\mathbb N}
\def\C{\mathbb C}
\def\Q{\mathbb Q}
\def\R{\mathbb R}
\def\T{\mathbb T}
\def\A{\mathbb A}
\def\Z{\mathbb Z}
\def\demi{\frac{1}{2}}

\begin{titlepage}
\author{Abed Bounemoura\footnote{Institute for Advanced Study, Einstein Drive, Princeton, NJ 08540, USA} {} and St\'ephane Fischler\footnote{Laboratoire de math\'ematiques d'Orsay, Univ Paris Sud, 91405 Orsay Cedex, France}}
\title{\LARGE{\textbf{A Diophantine duality applied to the KAM and Nekhoroshev theorems}}}
\end{titlepage}

\newcommand{\pb}[1]{{\bf #1}}

\maketitle

\begin{abstract}
In this paper, we use geometry of numbers to relate two dual Diophantine problems. This allows us to focus on simultaneous approximations rather than small linear forms. As a consequence, we develop a new approach to the perturbation theory for quasi-periodic solutions dealing only with periodic approximations and avoiding classical small divisors estimates. We obtain two results of stability, in the spirit of the KAM and Nekhoroshev theorems, in the model case of a perturbation of a constant vector field on the $n$-dimensional torus. Our first result, which is a Nekhoroshev type theorem, is the construction of a ``partial" normal form, that is a normal form with a small remainder whose size depends on the Diophantine properties of the vector. Then, assuming our vector satisfies the Bruno-Rüssmann condition, we construct an ``inverted" normal form, recovering the classical KAM theorem of Kolmogorov, Arnold and Moser for constant vector fields on torus. 
\end{abstract}
 
\section{Introduction and results}

Let $\alpha$ be a non-zero vector in $\R^n$. A quasi-periodic solution with frequency $\alpha$ is simply an integral curve of the constant vector field $X_\alpha=\alpha$ on the $n$-dimensional torus $\T^n=\R^n/\Z^n$. Constant vector fields on a torus have only trivial dynamical properties, but this is not the case for their analytic perturbations, which are of great interest both in theoretical and practical aspects in celestial mechanics and Hamiltonian systems.

First let us recall the dynamical properties of a constant vector field $X_\alpha$, which depends on the ``Diophantine" properties of $\alpha$ (note that these properties depend on $\alpha$ only through its equivalence class under the action of $\mathrm{PGL}(n,\Z)$). If the components of the vector $\alpha$ are independent over the field of rational numbers $\Q$, then $X_\alpha$ is minimal, that is every orbit is dense on $\T^n$, and moreover $X_\alpha$ is uniquely ergodic, that is every orbit is equidistributed with respect to the Haar measure. If the components of the vector $\alpha$ are dependent over $\Q$, and if $m$ is the number of independent rational relations, then there is an invariant foliation on $\T^n$, whose leaves are diffeomorphic to $\T^{n-m}$ and whose leaf space is diffeomorphic to $\T^m$, and the restriction of $X_\alpha$ to each leaf is minimal and uniquely ergodic. Here we will not always make the assumption that $m=0$, and we will write $d=n-m$. 

Adding a small analytic perturbation $P$ to our vector field $X_\alpha$, a simple description of the orbits of $X_\alpha+P$ is in general no longer available, and since Poincaré it is customary to try to conjugate the perturbed vector field to a ``simpler" one, which is usually called a normal form. In our context, the best one can hope for is to conjugate the perturbed vector field to a vector field of the form $X_\alpha+N$, where the vector field $N$ commutes with $X_\alpha$. To construct such a conjugacy to a normal form, one usually has to integrate along the integral curves of $X_\alpha$ and difficulties arise as these curves are in general non-compact: more precisely, values of linear forms $k\cdot\alpha$, $k\in\Z^n$, which might be very small for integers $k$ with large norms, appear in the denominator of the function generating the conjugacy. These are usually called small divisors, and they show that more quantitative information on the values of $|k\cdot\alpha|$, $k\in\Z^n$, that is on the Diophantine properties of $\alpha$, is needed in order to carry on a perturbation theory. By Dirichlet's box principle, when $m=0$ and hence $d=n$, for a given $Q\geq 1$, one can always find a non-zero integer $k$ with $|k|\leq Q$ for which $|k\cdot\alpha|^{-1}\geq |\alpha|^{-1}Q^{n-1}$. In the general case where $d$ is smaller than $n$, adding further multiplicative constants depending on $\alpha$, $Q^{n-1}$ can be replaced by $Q^{d-1}$. This gives a first way of quantifying these Diophantine properties, by introducing a real-valued function depending on $\alpha$, that we shall call here $\Psi'=\Psi'_{\alpha}$, which has to be non-decreasing, piecewise constant, unbounded and grows at least as $Q^{d-1}$ (see \S\ref{ssec1} for a precise definition; this function is denoted by $\Psi'_1$ there and is part of a family of functions $\Psi'_k$ for $1 \leq k \leq d$). Assuming that this function grows at most as a power in $Q$ (which was a condition introduced by Siegel in a related problem, and is known as a Diophantine condition), this problem of small divisors was solved by Kolmogorov (\cite{Kol53}, \cite{Kol54}), who introduced a new fast-converging iteration method similar to a Newton iteration method.

Yet there is one special situation in which these difficulties do not appear, namely in the case $m=n-1$, hence $d=1$, where the integral curves are all periodic, and hence compact. For such a vector, that we shall denote here $\omega$ to distinguish with the general $\alpha$, this means that there exists a real number $T>0$ such that $T\omega\in\Z^n$, and assuming that $T$ is minimal with this property, $T$ is the minimal common period of all the integral curves. By extension, such a vector $\omega$ is called periodic, and it is characterized by the fact the $(n-1)$-vector defined by the ratios of its components (or equivalently, its image in the projective space) is rational. The perturbation theory in this case is much simpler, it was already known to Poincaré and a classical Picard iteration method is here sufficient. By Dirichlet's box principle, it is always possible to approximate a non-zero vector $\alpha$ by a periodic vector $\omega$ as follows: when $m=0$ and hence $d=n$, for a given $Q\geq 1$, one can always find a periodic vector $\omega$, with period $T$, such that $|T\alpha-T\omega|\leq Q^{-1}$ and $|\alpha|^{-1}\leq T \leq |\alpha|^{-1}Q^{n-1}$. As before, in the general case where $d$ is smaller than $n$, one obtains $Q^{d-1}$ instead of $Q^{n-1}$ but with other constants depending on $\alpha$. However, such a periodic approximation does not give us enough information in general.  

It is in fact possible to find not just one but $d$ independent periodic approximations in the following sense: for any $Q\geq 1$, there exists independent periodic vectors $\omega_1,\dots,\omega_d$, with periods $T_1,\dots,T_d$, such that $|T_j\alpha-T_j\omega_j|\leq Q^{-1}$ for $1\leq j \leq d$. This last inequality will be expressed by saying that each $\omega_j$ is a $Q$-approximation of $\alpha$. Of course, in general the upper bound on the periods $T_j$, which is at least of order $Q^{d-1}$, can now be much larger, but we can introduce yet another real-valued function, that we call $\Psi=\Psi_\alpha$, which also has to be non-decreasing, piecewise constant, unbounded and grows at least as $Q^{d-1}$ and which gives the growth of the upper bound on these periods in terms of $Q$ (once again, see \S\ref{ssec1} for a precise definition where this function is denoted by $\Psi_d$ and is also part of a family of functions $\Psi_k$ for $1 \leq k \leq d$, see also Proposition \ref{corutile} in \S \ref{subseccordio} for the result). This is another characterization of the Diophantine properties of $\alpha$, and our main Diophantine result, Theorem~\ref{thdio}, will imply (as a particular case) that this characterization is equivalent to the previous one, in the sense that the functions $\Psi$ and $\Psi'$ are the same up to constants depending on $d$ and $\alpha$ (Theorem~\ref{thdio} states that the functions $\Psi_k$ and $\Psi'_{d+1-k}$ are the same up to constants depending on $d$ and $\alpha$, for any $1\leq k \leq d$). This will be proved in \S\ref{sec2}, by using arguments from the geometry of numbers: more precisely, our result will be a consequence of the relation between the successive minima of a convex body with respect to a given lattice and the successive minima of the polar convex body with respect to the polar lattice.

Then, in \S\ref{sec3}, we shall develop a new approach to the perturbation theory for quasi-periodic solutions based uniquely on periodic approximations. Roughly speaking, from an analytical point of view, we will reduce the general quasi-periodic case $1\leq d \leq n$ to the periodic case $d=1$. One could probably say that perturbation theory involves some arithmetics, some analysis and some geometry. Since we have nothing new to offer as far as the geometry is concerned, we shall restrict to a situation where it simply does not enter into the picture, and this is the case if the frequency of the solution is fixed. This is why we will consider perturbation of constant vector fields on the torus (we could have also considered perturbation of linear integrable Hamiltonian systems, this is quite similar). In this context, we will apply our method and prove two results in the spirit of the KAM and Nekhoroshev theorems. Our first result, Theorem~\ref{thmfini}, which is valid for any $1\leq d \leq n$ and any $\alpha \in \R^n\setminus\{0\}$, is the existence of an analytic conjugacy to a ``partial" normal form, that is we will construct an analytic conjugacy between $X_\alpha+P$ and a vector field $X_\alpha+N+R$, where $X_\alpha+N$ is a normal form and $R$ a ``small" remainder. The smallness of this remainder depends precisely on the Diophantine properties of $\alpha$, and if the latter satisfies a classical Diophantine condition, the remainder is exponentially small (up to an exponent) with respect to the inverse of the size of the perturbation. This statement is analogous to the Nekhoroshev theorem for perturbation of linear integrable Hamiltonian systems. Our second result, Theorem~\ref{thminfini}, which is valid only for $d=n$ and for $\alpha \in \R^n\setminus\{0\}$ satisfying the Bruno-Rüssmann condition, is the existence of an analytic conjugacy between a modified perturbed vector field $X_\alpha+X_\beta+P$, where $X_\beta=\beta$ is another constant vector field that depends on $P$, and the unperturbed vector field $X_\alpha$. Note that for $d=n$, constant vector fields are exactly vector fields $N$ for which $[N,X_\alpha]=0$, hence the theorem states the existence of a vector field $N$ which commutes with $X_\alpha$ such that $X_\alpha+N+P$ is analytically conjugated to $X_\alpha$, and therefore this can be called an ``inverted" normal form. This statement is exactly the classical KAM theorem for vector fields on the torus of Kolmogorov, Arnold and Moser.

To conclude, let us point out that the idea of introducing periodic approximations in perturbation theory is due to Lochak. In \cite{Loc92}, using only Dirichlet's box principle, Lochak gave drastic improvements, both from a qualitative and a quantitative point of view, on Nekhoroshev's stability estimates for a perturbation of a convex integrable Hamiltonian system. Lochak's argument strongly relies on convexity, and this assumption enables him to use just one periodic approximation to derive an essentially ``optimal" result. However, in the general case, one periodic approximation seems to be not enough. The idea has been then taken up by Niederman in \cite{Nie07}, and further improved in \cite{BN09}, where linearly independent periodic approximations were used to prove Nekhoroshev's estimates for a much larger class of integrable Hamiltonian systems. However, the arguments there were based only on successive applications of Dirichlet's box principle, and unlike \cite{Loc92}, the results obtained were far from being ``optimal" (this also stems from the fact that Nekhoroshev's estimates are much more complicated to derive without the convexity assumption). At last, we should also point out that using a multi-dimensional continued fraction algorithm due to Lagarias, Khanin, Lopes Dias and Marklof gave a proof of the KAM theorem with techniques closer to renormalization theory (see \cite{KDM06}, \cite{KDM07}). Even though their arguments deal with small divisors yet in a different way, they were still based on Fourier expansions and elimination of Fourier modes, which is quite different from the techniques we will use in this paper, since Fourier analysis will not be involved at all.


\section{A Diophantine duality}\label{sec2}

\newcommand{\etapr}{\eta'}
\newcommand{\Ek}{{\mathcal E}_k}
\newcommand{\Fk}{{\mathcal F}_k}
\newcommand{\Eun}{{\mathcal E}_1}
\newcommand{\Ede}{{\mathcal E}_2}
\newcommand{\es}{\underline e}
\newcommand{\el}{\es_\ell}
\newcommand{\eprl}{\es'_\ell}
\newcommand{\eprlpu}{\es'_{\ell+1}}
\newcommand{\eprs}{\es'}
\newcommand{\eun}{\es_1}
\newcommand{\eprun}{\es'_1}
\newcommand{\elpu}{\es_{\ell+1}}
\newcommand{\Xl}{X_\ell}
\newcommand{\Xprl}{X'_\ell}
\newcommand{\unk}{\{1,\ldots,k\}}
\newcommand{\Cpsipr}{C'_k}

We gather in this section all the Diophantine part of this text. In \S \ref{ssec1} we define the functions $\Psi_k$ and $\Psi'_k$ mentioned in the Introduction, and we state our main result: $\Psi_k$ and $\Psi'_{d+1-k}$ are essentially equal. We don't claim this result to be new, but we did not find it in the literature so we provide a complete proof based on a classical duality result of geometry of numbers. 
In the special case $k=d$ (which is the one used in the rest of this paper), this Diophantine duality is related to inhomogeneous approximation 
(see \cite{Casselsdio}, Chapter V, \S 9), in particular to transference results due to Khintchine (see Theorem VI in Chapter XI, \S 3 of  \cite{Cassels},
Theorem XVII in Chapter V, \S 8 of  \cite{Casselsdio}, and also Lemma 3 of \cite{BLMoscow2005} and Lemma 4.1 of \cite{MiWDEA}, p. 143). We would like to point out the fact that these transference results, between homogeneous and inhomogeneous Diophantine approximations, are distinct from the well-known ``Khintchine's transference principle'' (stated as Theorem 5A, p. 95 of \cite{Schmidt}) which connects the existence of solutions in dual homogeneous Diophantine approximation problems. By the way we believe that this transference principle of Khintchine is in general not good enough for the applications to the KAM and Nekhoroshev theorems (except in the very special case where the vector satisfies a classical Diophantine condition with the best exponent).

In \S \ref{subseccordio} we derive some corollaries and state other useful Diophantine properties (some of them related to Diophantine vectors and the Bruno-Rüssmann condition). In this way, all Diophantine properties used in our application to the perturbation theory for quasi-periodic solutions are stated in \S  \ref{subseccordio}.

We recall in \S \ref{ssec2} the definition and classical properties of the successive minima of a convex body with respect to a lattice, including the main result  from geometry of numbers we rely on (namely \eqref{eqdualCassels}). Using these tools we prove in \S \ref{ssec3} the results stated in  \S\S \ref{ssec1} or  \ref{subseccordio}, except for Proposition \ref{proppsietpsiprime} which is proved in \S \ref{subsecminifam}. At last, we focus in \S \ref{subsecunnb} on the case of one number, in which the continued fraction expansion enables one to compute $\Psi_k$ and $\Psi'_k$ explicitly.

\subsection{The general result} \label{ssec1}

Let $n\geq 2$, and $\al = (\al_0,\ldots,\al_{n-1})\in \R^{n}\setminus\{0\}$. Without loss of generality, we assume that $|\al|=|\al_0|$, where $|\al|=\max_{0\leq i \leq n-1}|\al_i|$. 

Let us say that a vector subspace of $\R^n$ is rational if it possesses a basis of vectors in $\Q^n$ (see for instance \cite{Boualg}, \S 8), and let $F=F_\alpha$ be the smallest rational subspace of $\R^n$ containing $\alpha$. The dimension $d$ of $F$, which is also the rank of the $\Z$-module $\Z^n \cap F$, is called the number of effective frequencies of $\al$. Of course, $F=\R^n$ if and only if $\al_0, \dots, \al_{n-1}$ are linearly independent over $\Q$.

\begin{definition} \label{defphi} For $k \in \{1,\dots,d\}$ and a real number $Q \geq 1 $, we let $\Psi_k(Q)$ denote the infimum of the set of $K > 0$ such that there exist $k$ linearly independent vectors $x\in\Z^n\cap F$ satisfying the inequalities
\begin{equation}\label{eqphi} 
\begin{cases}
| x | \leq K, \\ 
| x_0 \alpha - \alpha_0 x | \leq Q^{-1}. 
\end{cases}
\end{equation}
\end{definition}

For any $Q\geq 1$, $\Psi_k(Q)$ exists because it is  the infimum of a non-empty set, that is for $Q\geq 1$ and $K>0$ sufficiently large, the set of $k$ linearly independent vectors $x\in\Z^n\cap F$ satisfying~\eqref{eqphi} is non-empty; we will prove this in \S \ref{ssec3} (within the proof of Theorem \ref{thdio}), eventhough a direct proof would be possible. This set being also finite, the infimum in the above definition is in fact a minimum. In \S\ref{sec3}, we will use $\pphk$ only when $k=d$, which amounts to finding a basis of $F$ consisting in vectors of $\Z^n$ such that \eqref{eqphi} hold. On the other hand, when $k=1$ the problem is to find a non-zero vector in $\Z^n\cap F$ satisfying these inequalities. We will prove in \S\ref{subseccordio}, Proposition~\ref{corutile}, that for $Q$ sufficiently large, each non-zero integer vector satisfying \eqref{eqphi} gives a periodic vector $\omega$, with a period $T$ bounded (up to a constant) by $K$, which is essentially a $Q$-approximation of $\alpha$ in the sense that $|T\alpha-T\omega|$ is bounded (up to a constant) by $Q^{-1}$. Hence when $k=d$, we will find $d$ linearly independent periodic vectors, with periods essentially bounded by $\Psi_d(Q)$, and which are essentially $Q$-approximations of $\alpha$. Of course, $\Psi_k(Q)$ depends on $\alpha$, but for simplicity, we shall forget this dependence in the notation (except in \S\ref{sec3}, where the function $\Psi_d$ will be denoted by $\Psi_\alpha$). This function $\Psi_k$ is central in our approach; it is non-decreasing,  piecewise constant, and left-continuous (see Proposition \ref{proppsietpsiprime} in \S \ref{subseccordio} below). 

Definition \ref{defphi} can be easily stated in terms of successive minima (see \S \ref{ssec2} below). A variant of this definition would be  to ask for a family of $k$ linearly independent vectors  which can be completed to form a basis of the $\Z$-module $\Z^n\cap F$ (see Proposition \ref{corutile} in \S \ref{subseccordio} below); this would induce only minor changes. 

The classical properties of successive minima (recalled in  \S \ref{ssec2}) will allow us in  \S \ref{ssec3} to prove that $\pphk(Q)$ exists, that it is a positive real number for any $Q\geq 1$, and to relate the function $\pphk$ to the ``dual'' one $\ppsk$ we define now.

\begin{definition} \label{defpsi} For $k \in \{1,\dots,d\}$ let $\Cpsipr $ denote the least integer $Q \geq 1$ for which there exist $k$ linearly independent vectors $x\in\Z^n\cap F$ such that $|x|\leq Q$. For any   real number $Q \geq \Cpsipr $, we let $\Psi'_k(Q)$ denote the supremum of the set of $K > 0$ such that there exist $k$ linearly independent vectors $x\in\Z^n\cap F$ satisfying the inequalities
\begin{equation}\label{eqpsi}
\begin{cases}
| x | \leq Q,  \\ 
| x\cdot\alpha | \leq K^{-1}. 
\end{cases}
\end{equation}
\end{definition}

It is clear that $\Psi'_k(Q)$ exists for any $Q\geq \Cpsipr$ because there are only finitely many non-zero vectors $x\in \Z^n\cap F$ with $|x|\leq Q$, and they satisfy $x\cdot \alpha \neq 0$. From this it is also clear that the supremum in the above definition is in fact a maximum. The constant $\Cpsipr$ will not be very important in this paper, because we are specially interested in $\Psi'_k(Q)$ when $Q$ tends to infinity. Anyway, if $\al_0$, \ldots, $\al_{n-1}$ are linearly independent over $\Q$ then $\Cpsipr=1$ because $F=\R^n$.

The most interesting case is $k=1$, which amounts to  finding a non-zero vector in $\Z^n\cap F$ satisfying \eqref{eqpsi}. Of course, $\Psi'_k(Q)$ depends on $\alpha$, but as before, we shall not take into account this dependence in the notation. 

We will prove  in \S \ref{ssec3} that   the functions $\pphk$ and $\ppsdmk$ are equal up to constants that will be irrelevant in the applications. The precise result is the following (in which we have not tried to optimize the  constants neither the dependence in the lattice $\Z^n \cap F$).

\begin{theorem}\label{thdio} 
Let $Q_0=\max(1, (n+2)\Cpsipr |\al|^{-1})$, and
\[ c_1=(n+1)^{-1}|\al|, \; c_2=(n+2)^{-1}|\al|, \; c_3=(\det \Lam)^2 d! |\al|, \; c_4=2c_3.  \]
If  $Q \geq Q_0$,  then we have the inequalities
$$c_1\ppsdmk (c_2Q)  \leq \pphk(Q) \leq  c_3\ppsdmk (c_4Q).$$
\end{theorem}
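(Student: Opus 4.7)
The plan is to prove the two inequalities by rather different methods: for the lower bound, an elementary algebraic identity together with a dimension count; for the upper bound, the duality of successive minima (equation \eqref{eqdualCassels} of \S\ref{ssec2}) combined with the classical inclusion $(\det \Lambda)^2 \Lambda^* \subset \Lambda$. For the lower bound $c_1 \Psi'_{d+1-k}(c_2 Q) \leq \Psi_k(Q)$, I would argue by contradiction. Assume $K := \Psi_k(Q) < c_1 K'$ with $K' := \Psi'_{d+1-k}(c_2 Q)$ and (using the fact that the extrema are attained) pick $k$ linearly independent vectors $x^{(1)},\ldots,x^{(k)} \in \Lambda$ realizing $\Psi_k(Q) = K$ together with $d+1-k$ linearly independent vectors $y^{(1)},\ldots,y^{(d+1-k)} \in \Lambda$ realizing $\Psi'_{d+1-k}(c_2 Q) = K'$. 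The algebraic identity
\[ \alpha_0\, (x \cdot y) \;=\; x_0\, (y \cdot \alpha) \;-\; \sum_{i=1}^{n-1}(x_0 \alpha_i - \alpha_0 x_i)\, y_i, \]
combined with the triangle inequality and the constraints on $x^{(i)}, y^{(j)}$, yields $|\alpha_0| \cdot |x^{(i)} \cdot y^{(j)}| \leq K/K' + (n-1) c_2$. The choices $c_1 = |\alpha|/(n+1)$ and $c_2 = |\alpha|/(n+2)$ (together with $|\alpha| = |\alpha_0|$) are calibrated precisely so that this quantity is strictly less than $|\alpha_0|$. Hence $|x^{(i)} \cdot y^{(j)}| < 1$; since $x^{(i)}, y^{(j)} \in \Z^n$ have integer dot products, $x^{(i)} \cdot y^{(j)} = 0$ for all $i,j$. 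The $y^{(j)}$'s then lie in the $\R^n$-orthogonal complement of $\Span(x^{(1)},\ldots,x^{(k)})$ inside $F$, a subspace of dimension $d-k$, contradicting the linear independence of $d+1-k > d-k$ vectors.

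For the upper bound $\Psi_k(Q) \leq c_3 \Psi'_{d+1-k}(c_4 Q)$, which is an existence statement, I would invoke \eqref{eqdualCassels}. Introduce the symmetric convex body
\[ \calC(T) \;=\; \{\, x \in F : |x| \leq 1,\ |x_0 \alpha - \alpha_0 x| \leq |\alpha|/T \,\}, \]
for which $K \cdot \calC(T) = \{\, x \in F : |x| \leq K,\ |x_0 \alpha - \alpha_0 x| \leq K|\alpha|/T \,\}$. Choosing $T := K |\alpha| Q$ with $K := \Psi_k(Q)$ makes this body exactly the one defining $\Psi_k(Q)$, so $\lambda_k(\calC(T), \Lambda) \leq K$. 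A short computation with the polar body (taken with respect to the restriction of the $\R^n$ inner product to $F$) shows that any $y \in t\, \calC(T)^*$ satisfies $|y| \leq T t$ and $|y \cdot \alpha| \leq |\alpha|\, t$. Applying \eqref{eqdualCassels} gives $\lambda_{d+1-k}(\calC(T)^*, \Lambda^*) \leq d!/K$, producing $d+1-k$ linearly independent $y \in \Lambda^*$ with $|y| \leq d!\,|\alpha|\, Q$ and $|y \cdot \alpha| \leq d!\,|\alpha|/K$. To return from $\Lambda^*$ to $\Lambda$, I would use the inclusion $(\det\Lambda)^2 \Lambda^* \subset \Lambda$, easily obtained by writing the dual basis via the adjugate of the integer Gram matrix $(v_i \cdot v_j)_{1 \leq i,j \leq d}$ of a basis of $\Lambda$. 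The rescaled vectors $y' := (\det\Lambda)^2 y$ then lie in $\Lambda$, are still linearly independent, and satisfy $|y'| \leq c_3 Q$ and $|y' \cdot \alpha| \leq c_3/K$; this gives $\Psi'_{d+1-k}(c_3 Q) \geq K/c_3$, whence $\Psi_k(Q) = K \leq c_3\, \Psi'_{d+1-k}(c_3 Q) \leq c_3\, \Psi'_{d+1-k}(c_4 Q)$ since $c_4 > c_3$ and $\Psi'_{d+1-k}$ is non-decreasing.

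The main technical obstacle is that $\Psi_k(Q)$ is not itself a single successive minimum of any fixed convex body: the two defining constraints $|x| \leq K$ and $|x_0 \alpha - \alpha_0 x| \leq 1/Q$ do not scale uniformly in a single parameter. The remedy is to introduce the one-parameter family $\calC(T)$ above and then substitute $T = K |\alpha| Q$ after applying the Mahler--Cassels duality, which is exactly what produces the $|\alpha|$-factor in the $Q$-shifts $c_2, c_4$. A secondary subtlety arises when $d < n$: one must work intrinsically in the subspace $F$ with its induced inner product, where $\Lambda = \Z^n \cap F$ and its dual $\Lambda^*$ are both lattices in $F$ but generically $\Lambda^* \neq \Lambda$; the factor $(\det\Lambda)^2$ appearing in $c_3, c_4$ is precisely the rescaling needed to inject $\Lambda^*$ back into $\Lambda$. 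One must also check the hypothesis $Q \geq Q_0$ is just what is needed so that the extrema in both definitions are realized and the constants above make sense, but this is routine once the framework is in place.
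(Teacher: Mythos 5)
Your overall architecture is sound, and the lower bound is treated by a genuinely different (and in some ways more transparent) argument than the paper's. The paper proves both inequalities uniformly by passing to the dual convex body via Lemma~\ref{lemdio} and then invoking both sides of the Mahler duality \eqref{eqdualCassels}. You instead prove the lower bound $c_1 \ppsdmk(c_2 Q) \leq \pphk(Q)$ directly: the algebraic identity $\alpha_0(x\cdot y)=x_0(y\cdot\alpha)-\sum_{i\geq 1}(x_0\alpha_i-\alpha_0 x_i)y_i$ plus the triangle inequality forces $|x^{(i)}\cdot y^{(j)}|<1$, hence $=0$ by integrality, and then the dimension count $k+(d+1-k)>d$ in the $d$-dimensional space $F$ gives the contradiction. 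This is in effect an unpacking of the ``easy'' inequality $1\leq\lamk\lamdpumket$ of \eqref{eqdualCassels} combined with the inclusion $(n+1)^{-1}|\al|\ccpr{Q,K}\subset\ccet{Q,K}$ from Lemma~\ref{lemdio}, so it buys you a self-contained argument for one direction that avoids quoting the deep half of the duality theorem at all. Your constant calibration works: $c_1+(n-1)c_2 = |\al|(n^2+n+1)/((n+1)(n+2)) < |\al|=|\al_0|$ (in fact you get a sharper $(n-1)$ where the paper crudely writes $n$, since the $i=0$ term in the sum vanishes).

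There are, however, two genuine (though fixable) gaps in the upper bound argument. First, you write $\lambda_k(\calC(T),\Lam)\leq K$ and then apply \eqref{eqdualCassels} to conclude $\lambda_{d+1-k}(\calCet(T),\Lamet)\leq d!/K$. This step is logically unjustified as stated: from $\lamk\lamdpumket\leq d!$ and $\lamk\leq K$ one only gets $\lamdpumket\leq d!/\lamk$, whose right-hand side could be much larger than $d!/K$. What you actually need is $\lambda_k(\calC(T),\Lam)\geq K$, which does hold by minimality of $\Psi_k(Q)$ (for $\lambda<K$ one has $\lambda\calC(T)\subset\cc{Q,\lambda}$, so $k$ independent lattice points in $\lambda\calC(T)$ would contradict $\Psi_k(Q)=K$), so $\lambda_k(\calC(T),\Lam)=K$ exactly. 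The paper sidesteps this by perturbing to $(1\mp\eps)\pphk(Q)$, which also lets it establish the existence of $\pphk(Q)$ in the same sweep; you should either prove the equality $\lambda_k=K$ explicitly or adopt the $\eps$-perturbation. Second, your bound $|y|\leq Tt$ for $y\in t\,\calCet(T)$ is off by a factor of $2$: using the candidate $x=(2T|y|)^{-1}y\in\calC(T)$ (one needs $|y_0\alpha-\alpha_0 y|\leq 2|\al|\,|y|$, and the $2$ cannot be dropped for the sup norm) one only gets $|y|\leq 2Tt$. After the substitution $T=K|\al|Q$ and rescaling by $(\det\Lam)^2$, this yields $|y'|\leq c_4 Q$ rather than $c_3 Q$. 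That is precisely why the statement has $c_4=2c_3$; as written, your final step goes through only because you then enlarge $c_3 Q$ to $c_4 Q$ by monotonicity, but the inequality $\ppsdmk(c_3 Q)\geq K/c_3$ you claim along the way is not established. Inserting the factor $2$ in the polar estimate repairs both issues and reproduces the paper's constants exactly. Your derivation of $(\det\Lam)^2\Lamet\subset\Lam$ via the adjugate of the integer Gram matrix is correct and is a slight variant of the paper's Smith-normal-form argument.
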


As the proof shows, it is possible to replace $c_2$ with any real number less than $c_1$. With this improvement, the first inequality becomes essentially optimal in the case $n=2$ (see 
Proposition  \ref{propexplipsipsipr} in \S \ref{subsecunnb} below, and the remark following it). On the opposite, the constants $c_3$ and $c_4$ can be improved (for instance using refined versions of \eqref{eqdualCassels}), but we did not try to go any further in this direction because the values of these constants are not relevant in the applications we have in mind.

\subsection{Other Diophantine results} \label{subseccordio}

We gather in this section some corollaries of Theorem \ref{thdio}, and other Diophantine results useful in this paper.  
To begin with, let us state some properties of the functions $\Psi_k$ and $\Psi'_k$.

\begin{proposition}\label{proppsietpsiprime}
Let $k\in\und$. The functions $\Psi_k$ and $\Psi'_k$ are non-decreasing, piecewise constant,  and have limit $+\infty$ as $Q\to+\infty$. More precisely, there exist increasing sequences $(Q_\ell)_{\ell\geq 1}$ and $(Q'_\ell)_{\ell\geq 1}$, with limit $+\infty$, such that:
\begin{itemize}
\item $\Psi_k$ is constant on each interval $]Q_\ell, Q_{\ell+1}]$ and on $[1,Q_1]$, with $Q_1\geq 1$;
\item$\Psi'_k$ is constant on each interval $[Q'_\ell, Q'_{\ell+1}[$, with $Q'_1=\Cpsipr$;
\item The values $\Psi_k(Q_\ell)$ and the arguments $Q'_\ell$ are integers.
\end{itemize}
In particular, $\Psi_k$ is  left-continuous and $\Psi'_k$ is  right-continuous.
\end{proposition}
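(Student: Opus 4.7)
My plan is to deduce the four claimed properties directly from the definitions of $\Psi_k$ and $\Psi'_k$, using only the discreteness of $\Z^n$ and an elementary pigeonhole argument. Throughout I may assume $d\geq 2$; the case $d=1$, where $F\cap\Z^n$ is generated by a single primitive vector proportional to $\alpha$, is degenerate but trivial to verify separately. For $\Psi_k$, the first observation is that for any nonzero $x\in F\cap\Z^n$ the quantity $\delta(x):=|x_0\alpha-\alpha_0 x|$ is strictly positive: otherwise $x$ would be proportional to $\alpha$, forcing $\R\alpha$ to be a rational line and hence $\dim F=1$. Thus $x$ is admissible in Definition \ref{defphi} exactly when $Q\leq 1/\delta(x)$; the infimum defining $\Psi_k(Q)$ is attained on a finite collection of integer vectors, so $\Psi_k(Q)\in\N\setminus\{0\}$; and monotonicity is immediate (the admissible set shrinks with $Q$). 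Combined with integer-valuedness and the finiteness of $\Psi_k$ on $[1,\infty[$ (from the existence argument in the proof of Theorem \ref{thdio}), this produces the piecewise-constant structure with finitely many jumps on every bounded interval, and hence the increasing sequence $(Q_\ell)_{\ell\geq 1}$.

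The technical heart of the argument is the left-continuity of $\Psi_k$, which I would prove by a pigeonhole/compactness argument. Suppose, for contradiction, that $\lim_{Q\to Q_0^-}\Psi_k(Q)<\Psi_k(Q_0)$; then there exist $Q_n\nearrow Q_0$ together with optimal systems $x^{(1)}(n),\ldots,x^{(k)}(n)$ of norms uniformly bounded by some $C<\Psi_k(Q_0)$. Since there are only finitely many integer $k$-tuples with entries of norm $\leq C$, some subsequence is stationary, say equal to $y^{(1)},\ldots,y^{(k)}$, and $\delta(y^{(i)})\leq 1/Q_n$ for all $n$ passes in the limit to $\delta(y^{(i)})\leq 1/Q_0$. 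This provides an admissible system at $Q_0$ with norms $\leq C<\Psi_k(Q_0)$, contradicting the definition of $\Psi_k(Q_0)$. The same pigeonhole argument applied to a sequence $Q_n\to+\infty$ shows that boundedness of $\Psi_k$ would produce $k$ linearly independent integer vectors with $\delta=0$, all proportional to $\alpha$; this forces $k=1$ and $\dim F=1$, the degenerate case excluded above. Hence $\Psi_k(Q)\to+\infty$.

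For $\Psi'_k$ the analysis is much simpler because the constraint $|x|\leq Q$ with $|x|\in\N$ depends only on $\lfloor Q\rfloor$. Consequently $\Psi'_k$ is automatically constant on each interval $[m,m+1[$ for $m\in\N$, $m\geq\Cpsipr$, yielding the integer jump points $Q'_\ell$ and right-continuity, with $Q'_1=\Cpsipr$ by definition. Monotonicity follows from the admissible set growing with $Q$, and the limit $\Psi'_k(Q)\to+\infty$ follows from $\Psi_{d+1-k}(Q)\to+\infty$ via the equivalence established in Theorem \ref{thdio}. The principal obstacle in the whole proof is the pigeonhole compactness step for $\Psi_k$ just described; everything else is bookkeeping from the definitions.
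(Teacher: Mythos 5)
Your proof is correct, and it follows a genuinely different route from the paper's. The paper (\S\ref{subsecminifam}) deliberately avoids the direct argument: it introduces sequences of \emph{minimal families} in the spirit of Davenport and Schmidt, selecting for each $X\geq C'_k$ the family of $k$ independent lattice vectors of norm $\leq X$ minimizing $\eta$ (resp.\ $\eta'$), and then reads off $\Psi_k$ (resp.\ $\Psi'_k$) as a step function whose values are the integers $|\underline e_\ell|$ and whose jump points are $\eta(\underline e_\ell)^{-1}$ (resp.\ whose jump points are the integers $|\underline e'_\ell|$ and values $\eta'(\underline e'_\ell)^{-1}$); that machinery is not gratuitous, since it is reused in \S\ref{subsecunnb} to compute both functions explicitly via continued fractions when $n=2$. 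You instead argue straight from Definitions \ref{defphi} and \ref{defpsi}: integrality, attainment and monotonicity from the discreteness of $\Z^n\cap F$; left-continuity and divergence of $\Psi_k$ by a pigeonhole/stationarity argument on the finitely many candidate systems of bounded norm (using that admissibility of a fixed $x$ is the closed condition $Q\leq \delta(x)^{-1}$); and the properties of $\Psi'_k$ from the observation that the constraint $|x|\leq Q$ depends only on $\lfloor Q\rfloor$. This is precisely the ``direct proof'' the authors say is possible but do not write out. Your appeal to Theorem \ref{thdio} for $\Psi'_k\to+\infty$ is legitimate and non-circular (the proof of Theorem \ref{thdio} uses only existence and integer-valuedness of $\Psi_k$, established independently of Proposition \ref{proppsietpsiprime}, and it precedes this proposition in the paper); interestingly, at the corresponding point the paper's proof simply asserts that $\eta'(\underline e'_\ell)$ decreases to $0$, so on this sub-claim your argument is the more explicit one. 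One caveat: in the case $d=1$ that you set aside, every nonzero $x\in\Z^n\cap F$ is proportional to $\alpha$, so $\Psi_1$ and $\Psi'_1$ are in fact constant and the ``limit $+\infty$'' clause fails rather than being ``trivial to verify''; this is an imprecision of the statement itself (the paper's proof likewise assumes $\eta(\underline e)>0$, which also fails when $d=1$), but it should be flagged as such rather than dismissed as an easy verification.
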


It is not difficult to prove this result directly; however we will deduce it in \S \ref{subsecminifam} from an interpretation of $\Psi_k$ and $\Psi'_k$ in terms of sequences of minimal families, in the spirit of Davenport and Schmidt.

\bigskip

Then, let us recast the definition of the function $\Psi_d$ in terms of approximations of $\alpha$ by periodic vectors.  Recall that a vector $\omega\in\R^n\setminus\{0\}$ is $T$-periodic, $T>0$, if $T\omega\in\Z^n$.

\begin{proposition} \label{corutile}
For any $Q>\max(1,d|\alpha|^{-1})$, there exists $d$ periodic vectors $\omega_1, \dots, \omega_d$, of periods $T_1, \dots, T_d$, such that $T_1\omega_1, \dots, T_d\omega_d$ form a $\Z$-basis of $\Z^n \cap F$ and for $j\in\{1,\dots,d\}$,
\[ |\alpha-\omega_j|\leq d(|\alpha|T_j Q)^{-1}, \quad |\alpha|^{-1} \leq T_j \leq |\alpha|^{-1}d\Psi_d(Q).\]
\end{proposition}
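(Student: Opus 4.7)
The proof strategy would be to invoke the definition of $\Psi_d(Q)$ in its ``basis'' form to obtain $d$ integer vectors which both form a $\Z$-basis of $\Lambda:=\Z^n\cap F$ and satisfy the defining approximation inequalities, and then to produce the periodic approximations by a direct renormalisation of each basis vector.

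First I would establish the following refinement of Definition \ref{defphi}: there exist $x_1,\ldots,x_d\in\Lambda$ forming a $\Z$-basis of $\Lambda$ with
$$|x_j|\leq d\,\Psi_d(Q)\qquad\text{and}\qquad |x_{j,0}\al-\al_0 x_j|\leq d\,Q^{-1},\qquad 1\leq j\leq d.$$
This is exactly the ``variant'' announced in the paragraph following Definition \ref{defphi}. It rests on a classical theorem in the geometry of numbers ensuring that any lattice admits a basis whose $j$-th vector is controlled by a dimension-dependent constant times the $j$-th successive minimum of a symmetric convex body, applied to the body
$$\bigl\{x\in F : |x|\leq \Psi_d(Q),\ |x_0\al-\al_0 x|\leq Q^{-1}\bigr\}$$
whose $d$ successive minima with respect to $\Lambda$ are at most $1$ by the very definition of $\Psi_d(Q)$.

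Next I would check that the hypothesis $Q>d|\al|^{-1}$ forces $x_{j,0}\neq 0$ for each $j$: otherwise $|x_{j,0}\al-\al_0 x_j|=|\al_0|\,|x_j|=|\al|\,|x_j|\geq|\al|$ (since $x_j$ is a nonzero integer vector), contradicting the second bound above and the assumption $|\al|Q>d$. Up to changing $x_j$ into $-x_j$, which does not affect the basis property, I may then set
$$T_j:=\frac{x_{j,0}}{\al_0}>0,\qquad \omega_j:=\frac{\al_0}{x_{j,0}}\,x_j,$$
so that $T_j\omega_j=x_j\in\Lambda$; consequently $T_1\omega_1,\ldots,T_d\omega_d$ is a $\Z$-basis of $\Lambda$ and each $\omega_j$ is $T_j$-periodic.

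Finally, using $|\al|=|\al_0|$, the bounds $|x_{j,0}|\geq 1$ and $|x_j|\leq d\,\Psi_d(Q)$ immediately yield $|\al|^{-1}\leq T_j\leq d\,\Psi_d(Q)\,|\al|^{-1}$, whereas
$$|\al-\omega_j|=\frac{|x_{j,0}\al-\al_0 x_j|}{|x_{j,0}|}\leq \frac{d\,Q^{-1}}{|\al|\,T_j}=\frac{d}{|\al|\,T_j\,Q},$$
as required. The main obstacle is concentrated in the first step, namely the passage from mere linear independence (as provided by Definition \ref{defphi}) to an actual $\Z$-basis of $\Lambda$ with only a constant-factor loss; everything else is a routine rescaling.
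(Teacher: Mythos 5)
Your proof is correct and follows essentially the same route as the paper's: obtain a $\Z$-basis of $\Lambda=\Z^n\cap F$ inside a dilate of the convex body $\cc{Q,\Psi_d(Q)}$ via the classical geometry-of-numbers fact about bases versus successive minima (the paper cites the remark after Theorem VII, Chapter VIII of Cassels, giving a basis in $d\lambda_d\calC$, which yields the same constant $d$), then check $x_{j,0}\neq0$ from $Q>d|\alpha|^{-1}$ and renormalise each $x_j$ to a periodic vector. The only cosmetic difference is in how the non-vanishing of $x_{j,0}$ is phrased; the substance is identical.
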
 

We will prove this proposition at the end of \S \ref{ssec3} below, after the proof of Theorem~\ref{thdio}.

\bigskip

Finally, we have stated Theorem~\ref{thdio} in a very general setting, since we believe that this generality can be useful in applications. However, in \S\ref{sec3}, we will be  mainly interested in the special case where $k=d$, and Theorem~\ref{thdio} states that the functions $\Psi_d$ and $\Psi_1'$ are equal up to constants. Usually, in the perturbation theory of quasi-periodic solutions, the Diophantine properties of $\alpha$ are quantified by the function $\Psi'_1$: applying Theorem~\ref{thdio}, we will now reformulate these properties in terms of the function $\Psi_d$.  

\begin{definition}\label{defdio}
Given a real number $\tau\geq d-1$, a vector $\alpha$ belongs to $\mathcal{D}_d^\tau$ if and only if there exist two constants $Q'_\alpha \geq C'_1$ and $C'_\alpha >0$ such that for all $Q\geq Q'_\alpha$, $\Psi_1'(Q)\leq C'_\alpha Q^\tau$.
\end{definition}

Vectors in $\mathcal{D}_d^\tau$ are called Diophantine vectors with exponent $\tau$; they are usually defined only when $d=n$. The corollary below is an immediate consequence of Theorem~\ref{thdio}.

\begin{corollary}\label{cordio}
Given  $\tau\geq d-1$, a vector $\alpha$ belongs to $\mathcal{D}_d^\tau$ if and only if there exist two constants $Q_\alpha \geq 1$ and $C_\alpha > 0$ such that for all $Q\geq Q_\alpha$, $\Psi_d(Q)\leq C_\alpha Q^\tau$.
\end{corollary}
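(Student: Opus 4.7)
The plan is to deduce Corollary \ref{cordio} directly from Theorem \ref{thdio} by specializing to $k=d$. Since $d+1-k = 1$ when $k=d$, Theorem \ref{thdio} gives, for every $Q\geq Q_0$,
\[
c_1 \Psi'_1(c_2 Q) \;\leq\; \Psi_d(Q) \;\leq\; c_3 \Psi'_1(c_4 Q),
\]
with the explicit constants $c_1,c_2,c_3,c_4$ from the theorem. The whole corollary is extracting what this chain of inequalities says when one side grows polynomially in $Q$.

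For the forward direction, assume $\alpha \in \mathcal{D}_d^\tau$, so that $\Psi'_1(Q)\leq C'_\alpha Q^\tau$ for $Q \geq Q'_\alpha$. Substituting $c_4 Q$ into this bound and combining with the upper half of the displayed inequality, I get, whenever $Q \geq \max(Q_0,\, Q'_\alpha/c_4)$,
\[
\Psi_d(Q) \;\leq\; c_3 \Psi'_1(c_4 Q) \;\leq\; c_3 C'_\alpha (c_4 Q)^\tau \;=\; (c_3 c_4^\tau C'_\alpha)\, Q^\tau,
\]
so one may take $C_\alpha := c_3 c_4^\tau C'_\alpha$ and $Q_\alpha := \max(1,Q_0,Q'_\alpha/c_4)$.

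For the converse, assume $\Psi_d(Q) \leq C_\alpha Q^\tau$ for $Q \geq Q_\alpha$. The lower half of the displayed inequality says $\Psi'_1(c_2 Q) \leq c_1^{-1}\Psi_d(Q)$, hence $\Psi'_1(c_2 Q) \leq c_1^{-1} C_\alpha Q^\tau$ for $Q \geq \max(Q_0, Q_\alpha)$. Setting $R = c_2 Q$, this rewrites as
\[
\Psi'_1(R) \;\leq\; c_1^{-1} c_2^{-\tau} C_\alpha\, R^\tau
\]
for $R \geq c_2 \max(Q_0, Q_\alpha)$. Taking $C'_\alpha := c_1^{-1} c_2^{-\tau} C_\alpha$ and $Q'_\alpha := \max(C'_1,\, c_2 \max(Q_0,Q_\alpha))$ (to ensure also that $\Psi'_1$ is defined on $[Q'_\alpha,+\infty)$), one obtains $\alpha \in \mathcal{D}_d^\tau$.

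There is no real obstacle here: the argument is purely a rewriting of Theorem \ref{thdio}, and the hypothesis $\tau \geq d-1$ plays no role in the deduction itself (it only makes the class $\mathcal{D}_d^\tau$ non-empty, as $\Psi'_1$ grows at least like $Q^{d-1}$).
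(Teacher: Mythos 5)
Your proof is correct and is exactly the route the paper intends: the paper states Corollary~\ref{cordio} as an immediate consequence of Theorem~\ref{thdio} with $k=d$ (so $\Psi'_{d+1-k}=\Psi'_1$), and you have simply made the constant-tracking and threshold adjustments explicit, including the harmless care about $Q'_\alpha\geq C'_1$. Nothing further is needed.
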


Another class of vectors which is well-studied in connection with perturbation of analytic quasi-periodic solutions is the class of vectors satisfying the Bruno-Rüssmann condition.

\begin{definition}\label{defbr}
A vector $\alpha$ belongs to $\mathcal{B}_d$ if and only if there exist a continuous, non-decreasing and unbounded function $\Phi': [1,+\infty[ \rightarrow [1,+\infty[$ such that $\Phi' \geq \Psi'_1$ and 
\[ \int_{1}^{+\infty}Q^{-2}\ln(\Phi'(Q))dQ < \infty. \]
\end{definition}

Vectors in $\mathcal{B}_d$ are called Bruno-Russmann vectors, and as before, they are usually defined only in the case $d=n$. This is not the original definition of Rüssmann (see \cite{Rus01} for instance), but is equivalent to it (for $d=n$): instead of $\Phi'\geq \Psi_1'$, one requires that $|k\cdot\alpha|^{-1}\leq \Phi'(|k|)$ for all $k\in\Z^n\setminus\{0\}$, but in view of the definition of $\Psi_1'$ and the fact that the supremum is reached in this definition, these are equivalent.  

The corollary below is also an immediate consequence of Theorem~\ref{thdio}.

\begin{corollary}\label{corbr}
A vector $\alpha$ belongs to $\mathcal{B}_d$ if and only if there exist a continuous, non-decreasing and unbounded function $\Phi: [1,+\infty[ \rightarrow [1,+\infty[$ such that $\Phi \geq \Psi_d$ and 
\[ \int_{1}^{+\infty}Q^{-2}\ln(\Phi(Q))dQ < \infty. \]
\end{corollary}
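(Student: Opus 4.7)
The plan is to derive the corollary directly from Theorem \ref{thdio} applied with $k=d$, which furnishes the two-sided bound
$$c_1 \Psi'_1(c_2 Q) \,\leq\, \Psi_d(Q) \,\leq\, c_3 \Psi'_1(c_4 Q), \qquad Q \geq Q_0.$$
The convergence of a Bruno--R\"ussmann type integral $\int_1^{+\infty} Q^{-2} \ln \Phi(Q)\, dQ$ is insensitive to the two elementary operations on $\Phi$ that will appear: dilation of the argument by a positive constant $c$, and multiplication of the function by a positive constant $\kappa$. Indeed, the change of variable $u = cQ$ gives
$$\int_{\max(1,c^{-1})}^{+\infty} Q^{-2} \ln \Phi(cQ)\, dQ \,=\, c \int_{\max(c,1)}^{+\infty} u^{-2} \ln \Phi(u)\, du,$$
and the part of the original integral over the bounded complementary interval (if any) is absorbed into a finite quantity by the convergence of $\int Q^{-2}\,dQ$; multiplying $\Phi$ by $\kappa>0$ only adds $\ln \kappa$.

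For the direct implication, starting from $\Phi'$ as in Definition \ref{defbr}, I set
$$\Phi(Q) \,=\, \max\bigl(c_3 \Phi'(\max(c_4 Q,\,1)),\ \max(1,\Psi_d(Q_0))\bigr), \qquad Q \geq 1.$$
Then $\Phi$ is continuous, non-decreasing, unbounded, and takes values in $[1,+\infty[$. It dominates $\Psi_d$: on $[1,Q_0]$ the monotonicity of $\Psi_d$ (Proposition \ref{proppsietpsiprime}) gives $\Phi(Q) \geq \Psi_d(Q_0) \geq \Psi_d(Q)$, while on $[Q_0,+\infty[$ the upper half of the sandwich combined with $\Phi' \geq \Psi'_1$ yields $\Phi(Q) \geq c_3 \Psi'_1(c_4 Q) \geq \Psi_d(Q)$. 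Finiteness of $\int_1^{+\infty} Q^{-2} \ln \Phi(Q)\, dQ$ then follows from the two elementary facts above together with the inequality $\ln \max(a,b) \leq \ln a + \ln b + \ln 2$ (valid for $a,b \geq 1$), which splits the integrand into integrable pieces.

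The converse is entirely symmetric: given $\Phi$ as in the corollary, set
$$\Phi'(Q) \,=\, \max\bigl(c_1^{-1} \Phi(\max(c_2^{-1} Q,\,1)),\ \max(1,\Psi'_1(c_2 Q_0))\bigr),$$
noting that $c_2 Q_0 \geq C'_1$ from the definition of $Q_0$, so that $\Psi'_1(c_2 Q_0)$ is well-defined and finite by Proposition \ref{proppsietpsiprime}. One then checks $\Phi' \geq \Psi'_1$ via the lower half $c_1 \Psi'_1(c_2 Q) \leq \Psi_d(Q)$ of the sandwich and verifies integrability in the same way. I do not anticipate any substantial obstacle; the whole argument is a routine transfer of an integrability condition across a linear two-sided estimate. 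The only care required is to keep the constructed function continuous, non-decreasing, unbounded, and above the target on the bounded initial interval on which Theorem \ref{thdio} is silent, and this is uniformly arranged by the outer $\max$ with a suitable finite constant.
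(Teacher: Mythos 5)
Your argument is correct and is exactly the route the paper takes: Corollary \ref{corbr} is stated there as an immediate consequence of Theorem \ref{thdio} applied with $k=d$, and your explicit constructions of $\Phi$ from $\Phi'$ (and conversely) merely spell out that transfer, including the correct treatment of the initial interval $[1,Q_0]$ and the verification that $c_2Q_0\geq C'_1$. One cosmetic point: since $c_3$ (resp. $c_1^{-1}$) may be smaller than $1$, the quantity $c_3\Phi'(\cdot)$ (resp. $c_1^{-1}\Phi(\cdot)$) need not be $\geq 1$, so in the integrability step either replace these constants by $\max(c_3,1)$ (resp. $\max(c_1^{-1},1)$) or bound $\ln\max(a,b)\leq\max(0,\ln a)+\ln b$ for $b\geq 1$; nothing else changes.
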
 

In the appendix, we will show in fact that one can always find a continuous, non-decreasing and unbounded function $\Phi : [1,+\infty[ \rightarrow [1,+\infty[$ such that $\Psi_d(Q) \leq \Phi(Q) \leq \Psi_d(Q+1)$ for $Q\geq 1$: therefore it is equivalent to ask for the integral condition with $\Psi_d$ instead of $\Phi$. Of course, the same comment applies to $\Psi'_1$ in the original definition.

\subsection{Successive minima} \label{ssec2}

In this section we state the definition, and several classical properties, of the successive minima of a convex body with respect to a lattice. Standard references on  this topic include   \cite{Cassels} (Chapter VIII) and \cite{GL}. Classical results will be sufficient for our purposes; they are contained in  the first edition \cite{Lseul} of \cite{GL}. The interested  reader will find in these references much more than what we present here, including proofs, attributions and refinements of the results we state. 

\medskip

Let $V$ be a Euclidean vector space of dimension $d\geq 1$. Let $\Lam$ be a lattice in $V$, that is a discrete $\Z$-submodule of rank $d$. This means that $\Lam  = \Z e_1 + \ldots + \Z e_d$ for some basis $(e_1,\ldots,e_d)$ of $V$. Of course the most important example is $\Z^d$ if $V= \R^d$.

Let $\calC \subset V$ be a convex compact subset, symmetric with respect to the origin (that is, $-x \in \calC$ for any $x\in \calC$), with positive volume. Then  $\calC$ contains a small open ball around the origin. For any $\lam \in \R$ we let $\lam \calC$ denote the set of all $\lam x$ with $x\in \calC$.

Let $k\in\und$. Then the set of all positive real numbers $\lam$ such that $\lam \calC \cap \Lam$ contains $k$ linearly independent points is a closed interval $[\lamk,+\infty[$: the positive real number $\lamk$ defined in this way is called the  {\em $k$-th successive minimum} of $\calC$ with respect to $\Lam$.
In particular, $\lamu$ is the least positive  real number $\lam$ such that $ \lam \calC \cap \Lam \neq \{0\}$ (since $\calC$ is compact, it is bounded so that $ \lam \calC \cap \Lam  = \{0\}$ if $\lam>0$ is sufficiently small, because $\Lam$ is discrete). On the other hand, $\lamd$ is the least $\lam > 0 $ such that  $ \lam \calC \cap \Lam$ contains a basis of $V$. The definition of successive minima yields easily the following properties:
\begin{equation}\label{eqordre}
0 < \lamu \leq \ldots \leq \lamk \leq \ldots \leq \lamd, 
\end{equation}
\begin{equation}\label{eqinclus}
\lamk \geq \lamkpr
\end{equation}
for any $1\leq k \leq d$, if $\calC \subset \calCpr$ and $\Lam \subset \Lampr$, and
\begin{equation}\label{eqmultcvx}
\lamkmuc = \mu^{-1} \lamk
\end{equation}
\begin{equation}\label{eqmultlam}
\lamkmulam = \mu  \lamk
\end{equation}
for any $1\leq k \leq d$ and any $\mu>0$. 

A basis of $V$ consisting of vectors of $\Lam $ is not always a $\Z$-basis of $\Lam$: the $\Z$-submodule of $\Lam$ it generates may have an index $\geq 2$ in $\Lam$. Therefore the definition of $\lamd$ does not imply the existence of a $\Z$-basis of $\Lam$ consisting in vectors of $  \lamd \calC   $. 
However it is known (see the remark after the corollary of Theorem VII in Chapter VIII of \cite{Cassels}) that there is a $\Z$-basis of $\Lam$ consisting in vectors of $d \lamd \calC  $ (and this will be the main argument in the proof of Proposition \ref{corutile}, in \S \ref{ssec3}). More generally, for any $k \in \und$ there exist linearly independent vectors $e_1,\ldots,e_k \in  k \lamk \calC \cap \Lam $ such that $\Z e_1 + \ldots + \Z e_k$ is saturated in $\Lam $ (that is, $\Span_\R(e_1,\ldots,e_k) \cap \Lam = \Z e_1 + \ldots + \Z e_k$), so that $e_1,\ldots,e_k$ are the first $k$ vectors of a basis of $\Lam$.

\bigskip

Our main tool in the proof of Theorem \ref{thdio} is a duality result, namely Theorem VI in Chapter VIII of \cite{Cassels}, \S 5. Let $\calC$ and $\Lam $ be as above. The polar (or dual) convex body $\calCet$ of $\calC$ is the set of all $x\in V$ such that, for any $y \in \calC$, $x\cdot y \leq 1$ (where $\cdot$ is the scalar product on $V$); see for instance Chapter IV, \S 3.3 of  \cite{Cassels}. Then $\calCet $ is also  a convex compact subset of $V$, symmetric with respect to the origin, with positive volume. The polar (or dual, or reciprocal) lattice $\Lamet $ of $\Lam$ is  the set of all $x\in V$ such that, for any $y \in \Lam$, $x\cdot y \in \Z$; see for instance Chapter I, \S 5 of  \cite{Cassels}. Then  Theorem VI in Chapter VIII of \cite{Cassels}, \S 5, asserts that
\begin{equation} \label{eqdualCassels}
1 \leq \lamk \lamdpumket \leq d!
\end{equation}
for any $k \in \und$. The constants are not important in the applications we have in mind, so this result means that $\lamk$ is essentially equal to $1/\lamdpumket$. With $d!^2$ instead of $d!$ in the upper bound (which does not matter for our purposes), it has been proved by Mahler \cite{Mahler1939} (see also \cite{MahlerCompound}) after an earlier result of Riesz \cite{Riesz}. More recent results show that the constant $d!$ can be in fact improved to $Cd(1+\log d)$ for some universal constant $C$ (see \cite{Ban96}, the optimal constant is expected to be $Cd$).

\bigskip

Let $\Lam$ be a lattice, and $(e_1,\ldots,e_d)$ be a $\Z$-basis of $\Lam$. Then the determinant of $(e_1,\ldots,e_d)$ with respect to an orthonormal basis of $F$    depends, in absolute value,  neither on  $(e_1,\ldots,e_d)$ nor on the chosen orthonormal basis. Its absolute value is the determinant of $\Lam$, denoted by $\det \Lam$. In general the theory of successive minima is introduced in $\R^d$, so that the orthonormal basis can be chosen to be the canonical one. If $\Lamet $ denotes the dual lattice as above, then $\det \Lamet = (\det \Lam)^{-1}$ (see Lemma 5 in Chapter I, \S 5 of  \cite{Cassels}).

\bigskip

Even though we won't use it  directly in this paper, it seems important to recall Minkowski's theorem 
$$\frac{2^d \det(\Lam)}{d! \vol(\calC)} \leq \prod_{k=1} ^d \lamk \leq \frac{2^d \det(\Lam)}{\vol(\calC)}$$
where $ \vol(\calC)$  is the volume of $\calC$ (see Chapter VIII, \S 1 of \cite{Cassels}); this result is used in the proof of \eqref{eqdualCassels}. Using \eqref{eqordre} it implies
$$\lamu^d \, \, \vol(\calC) \leq 2^d \det(\Lam).$$
Accordingly, if $ \vol(\calC) \geq  2^d \det(\Lam)$ then $\lamu \leq 1$ so that  $\calC \cap \Lam \neq \{0\}$: this consequence is known as Minkowski's first theorem on convex bodies.

\subsection{Proof of the  Diophantine results} \label{ssec3}

In this section we relate the functions $\pphk$ and $\ppsk$ defined in \S \ref{ssec1} to successive minima, and apply classical results recalled in \S \ref{ssec2} to prove Theorem \ref{thdio} stated  in \S \ref{ssec1}, and also that $\Psi_k(Q)$ exists for any $Q\geq 1$. At the end of this section we prove also Proposition \ref{corutile}, using the same tools.

With the notation of  \S \ref{ssec1}, for any $Q,K > 0$ we denote by $\cc{Q,K}$ the set of all $x\in F$ such that \eqref{eqphi} hold, that is
\[ \mathcal{C}(Q,K)=\{x\in F \; | \; |x|\leq K, \; |x_0\alpha-\alpha_0x|\leq Q^{-1}\}. \]
This is a compact convex subset, symmetric with respect to the origin, in the vector space $F$ equipped with the Euclidean structure induced from the canonical one on $\R^n$. It has a positive volume (see Lemma 4 in Appendix B of \cite{Casselsdio}). 

We let also $\Lam = \Z^n \cap F$. By definition, $F$ has a basis consisting in vectors of $\Q^n$. Multiplying these vectors by a common denominator of their coordinates yields a basis of $F$ consisting in vectors of $\Z^n$. Therefore the $\Z$-module $\Lam = \Z^n \cap F$ has rank at least $d$; since it is discrete, it has rank $d$ and it is a lattice in $F$.

Now for any $k \in\und$ and any $Q \geq 1 $ we have
\begin{equation} \label{eqlienphi}
\pphk(Q) = \inf\{K>0,\, \lam_k(\cc{Q,K},\Lam)\leq 1\},
\end{equation}
provided it is not the infimum of an empty set (and we shall prove this below).

To obtain an analogous property for $\ppsk$, for any $Q,K>0$ we denote by  $\ccpr{Q,K}$ the set of all $x\in F$ such that \eqref{eqpsi} hold, that is
\[ \ccpr{Q,K}=\{x\in F \; | \; |x|\leq Q, \; |x\cdot\al|\leq K^{-1}\}. \]
Then $\ccpr{Q,K}$ is also a compact convex subset, symmetric with respect to the origin, with positive volume; and we have for any $k\in\und$ and any $Q\geq \Cpsipr$:
\begin{equation} \label{eqlienpsi}
\ppsk(Q) = \sup\{K>0,\, \lam_k(\ccpr{Q,K},\Lam)\leq 1\}.
\end{equation}

We will deduce Theorem \ref{thdio} from \eqref{eqdualCassels} (see  \S \ref{ssec2}); with this aim in mind, we denote by $\Lamet$ the lattice dual to $\Lam$, and by $\ccet{Q,K}$   the convex body  dual to $\cc{Q,K}$. The main step is the following lemma.

\begin{lemma}\label{lemdio} 
We have 
$$\Lam \subset \Lamet \subset (\det \Lam)^{-2} \Lam$$
and, for any $Q,K>0$ such that $2 |\al | Q K \geq 1$,
$$(n+1)^{-1}|\al|\ccpr{Q,K} \subset \ccet{Q,K} \subset \ccpr{2|\al| Q, |\al| ^{-1} K  }.$$
\end{lemma}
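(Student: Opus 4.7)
The plan is to prove the lemma by treating the three inclusions separately by direct computation; the convex body inclusions, especially the second one, are the main content. For the lattice inclusions, $\Lam = \Z^n \cap F$ consists of integer vectors, so the dot product of any two elements of $\Lam$ is an integer, which immediately gives $\Lam \subset \Lamet$. For the reverse inclusion, I would fix a $\Z$-basis $(e_1,\ldots,e_d)$ of $\Lam$ with (integer) Gram matrix $G = (e_i\cdot e_j)_{ij}$, and take the dual basis $(e_1^*,\ldots,e_d^*)$ characterized by $e_i^* \cdot e_j = \delta_{ij}$, which is automatically a $\Z$-basis of $\Lamet$. Writing $e_i^* = \sum_j (G^{-1})_{ij}\, e_j$ and applying Cramer's rule together with $\det G = (\det \Lam)^2$ shows that each entry of $G^{-1}$ lies in $(\det \Lam)^{-2}\Z$, hence $\Lamet \subset (\det \Lam)^{-2}\Lam$.

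For the first convex inclusion, the key tool is the decomposition $y = (y_0/\al_0)\al + \eps$ valid for any $y \in F$: if $y \in \cc{Q,K}$ then $|\al_0 \eps| = |y_0 \al - \al_0 y| \leq Q^{-1}$ forces $|\eps| \leq (|\al|Q)^{-1}$. For $x' \in \ccpr{Q,K}$ and such $y$, I would split $x'\cdot y = (y_0/\al_0)(x'\cdot\al) + x'\cdot \eps$ and estimate each piece, using $|y_0|\leq K$, $|x'\cdot\al|\leq K^{-1}$, $|x'| \leq Q$, and the crude bound $|x'\cdot \eps| \leq n|x'||\eps|$. The first piece is at most $1/|\al|$ and the second is at most $n/|\al|$, so $|x'\cdot y| \leq (n+1)/|\al|$ and hence $(n+1)^{-1}|\al|\, x' \in \ccet{Q,K}$.

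For the second inclusion, take $x \in \ccet{Q,K}$. The bound $|x\cdot \al|\leq |\al|/K$ comes at once from testing against $y = (K/|\al|)\al \in \cc{Q,K}$ (note $y_0\al - \al_0 y = 0$). The sup-norm bound $|x| \leq 2|\al|Q$ is the main obstacle, because when $F \subsetneq \R^n$ one cannot test against the standard coordinate vectors $e_j\in\R^n$ themselves. I would test instead against $v_j := P_F(e_j) \in F$, the orthogonal projection of $e_j$ onto $F$: since $x\in F$, one has $x_j = x\cdot e_j = x\cdot v_j$. A short Cauchy--Schwarz argument using the idempotency of $P_F$ (whose diagonal entries lie in $[0,1]$) gives $|v_j| \leq 1$---this is not automatic, since orthogonal projections contract only the $\ell^2$ norm---and consequently $|(v_j)_0 \al - \al_0 v_j| \leq 2|\al|$. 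Choosing $\delta = \min(K, (2|\al|Q)^{-1})$ ensures $\delta v_j \in \cc{Q,K}$, producing $|x_j| \leq \delta^{-1} = \max(K^{-1}, 2|\al|Q) \leq 2|\al|Q$, where the final inequality uses precisely the hypothesis $2|\al|QK \geq 1$.
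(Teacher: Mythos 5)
Your proposal is correct: all three inclusions are verified, the hypothesis $2|\al|QK\geq 1$ is used exactly where it is needed, and the estimates match the constants in the statement. The overall strategy (direct verification by feeding well-chosen test vectors into the polarity condition) is the same as in the paper, and your treatment of the first convex inclusion is essentially the paper's computation, just rewritten through the decomposition $y=(y_0/\al_0)\al+\eps$ instead of the identity $\al_0\, x\cdot y=(\al_0x-x_0\al)\cdot y+x_0\,\al\cdot y$. The two places where you genuinely diverge are as follows. For $\Lamet\subset(\det\Lam)^{-2}\Lam$, you invoke the Gram matrix and Cramer's rule ($G^{-1}=\mathrm{adj}(G)/\det G$ with $\det G=(\det\Lam)^2$ and integer adjugate), whereas the paper uses elementary divisors: a basis $(v_j)$ of $\Lamet$ with $(\ell_jv_j)$ a basis of $\Lam$, the index $\ell_1\cdots\ell_d=(\det\Lam)^2$, and hence $\ell_j\mid(\det\Lam)^2$; the two arguments are of comparable length and both standard. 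For the sup-norm bound $|x|\leq 2|\al|Q$ on elements of the polar body, your route is to test against the orthogonal projections $P_F(e_j)$ of the coordinate vectors, which forces you to prove the (correct, and not automatic) auxiliary fact that these projections have sup-norm at most $1$, via the entries of the projection matrix; the paper instead tests against the rescaled vector $(2Q|y|\,|\al|)^{-1}y$ itself, obtaining $|y|^2\leq y\cdot y\leq 2Q|y|\,|\al|$ in one line with no projection lemma at all. The paper's self-testing trick is shorter and stays entirely inside $F$ without auxiliary geometry; your version costs an extra lemma but bounds each coordinate of $x$ individually, which is a slightly more robust mechanism if one wanted coordinatewise information. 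Both arguments consume the hypothesis $2|\al|QK\geq 1$ for the same purpose, namely to ensure the test vector has sup-norm at most $K$.
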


In general $ \Lamet$ is not equal to $\Lam$, so that  $(\det \Lam)^{-2}$ can not be replaced with 1. However it can probably be replaced with another function of $\Lam$  (see the proof below). 

\begin{proof}[Proof of Lemma~\ref{lemdio}] Since $\Lam \subset \Z^n$, we have $x\cdot y \in \Z$ for any $x,y\in\Lam$ so that $\Lam \subset \Lamet$. Therefore  $\Lam$ is a sub-lattice of $\Lamet$, so that there exists a $\Z$-basis $(v_1,\ldots,v_d)$ of $\Lamet$ and positive integers $\ell_1,\ldots,\ell_d$ such that $(\ell_1v_1,\ldots,\ell_dv_d)$ is 
 a $\Z$-basis  of $\Lam$. Then $\ell_1\ldots\ell_d$ is the index of $\Lam$ in $\Lamet$, that is the absolute value of the determinant of  $(\ell_1v_1,\ldots,\ell_dv_d)$ with respect to the basis $(v_1,\ldots,v_d)$. Since $(\ell_1v_1,\ldots,\ell_dv_d)$  (respectively  $(v_1,\ldots,v_d)$) has  determinant equal (up to a sign) to $\det \Lam$ (respectively $\det \Lamet = (\det \Lam)^{-1}$) in an orthonormal basis of $F$, the index $\ell_1\ldots\ell_d$ is equal to $ (\det \Lam)^2$. Therefore all integers $\ell_j$ are divisors of $ (\det \Lam)^2$, and we have $ (\det \Lam)^2  \Lamet \subset  \Lam $.

Let $y\in  \ccet{Q,K} $, so that 
\begin{equation} \label{eqdemlemun}
 \forall x \in \cc{Q,K}, \quad | x \cdot y | \leq 1 
\end{equation}
by applying the definition of a dual convex body to both $x$ and $-x$, since $ \cc{Q,K}$ is symmetric with respect to 0. Since $|\al|^{-1}K \al = (|\al|^{-1}K\al_0,\ldots,|\al|^{-1}K\al_{n-1})\in  \cc{Q,K}$ because $\al \in F $, \eqref{eqdemlemun} yields $|y\cdot \al| \leq |\al| K^{-1}$. On the other hand, if $y\neq 0$ then $(2Q |y| \, |\al|)^{-1}y\in  \cc{Q,K}$ since $2 |\al | Q K \geq 1$, so that \eqref{eqdemlemun} yields $|y|^2 \leq y\cdot y \leq 2Q  |y| \, |\al| $ and therefore $|y| \leq 2   |\al| Q$. This concludes the proof that $\ccet{Q,K} \subset \ccpr{2|\al| Q, |\al| ^{-1} K  }$. 

Now let $y\in \ccpr{Q,K} $ and $x\in \cc{Q,K} $. Then we have
$$|\al_0 x\cdot y| = \Big| (\al_0 x - x_0 \al)\cdot y + x_0 \al \cdot y \Big| \leq n | \al_0 x - x_0 \al | \, |y| + |x| \, |\al \cdot y| \leq \frac{n}{Q}Q+K \frac{1}K=n+1,$$
so that $(n+1)^{-1}|\al|y \in  \ccet{Q,K} $ since $|\al| = |\al_0|$, thereby concluding the proof of Lemma \ref{lemdio}.
\end{proof}

\begin{proof}[Proof of Theorem \ref{thdio}] First of all, let us notice that for any positive real number $\mu$ and any $Q,K> 0$ we have
\begin{equation} \label{eqhom}
\begin{cases}
\mu \cc{Q,K} = \cc{\mu^{-1} Q,\mu K}, \\
\mu \ccpr{Q,K} = \ccpr{ \mu Q ,\mu^{-1}K}.
\end{cases}
\end{equation}
Using \eqref{eqmultcvx}, these equalities imply
\begin{equation} \label{eqmult}
\begin{cases}
\lam_k \left( \cc{\mu^{-1}Q,\mu K}, \Lam\right) = \mu^{-1}  \lam_k \left(  \cc{Q,K}, \Lam\right), \\
\lam_k \left(  \ccpr{ \mu Q ,\mu^{-1}K}  , \Lam\right) = \mu^{-1}  \lam_k \left(  \ccpr{Q,K}, \Lam\right)
\end{cases}
\end{equation}
for any $k\in\und$ and any $\mu > 0$.

We shall prove now, at the same time, that $\Psi_k(Q)$ exists and that Theorem \ref{thdio} holds. Let $k\in\und$, and $Q \geq \max(1, (n+2)\Cpsipr |\al|^{-1})$. If $\Psi_k(Q)$ exists, for any   $\eps> 0$  sufficiently small  \eqref{eqlienphi} yields
\begin{equation*}
\begin{cases}
\lambda_k\left(\cc{ Q, (1-\eps) \pphk(Q)}, \Lam\right) > 1, \\
\lambda_k\left(\cc{ Q, (1+\eps) \pphk(Q)}, \Lam\right) \leq 1.
\end{cases}
\end{equation*}
If $\Psi_k(Q)$ does not exist, then the first inequality is valid  when $  (1-\eps) \pphk(Q)$ is replaced with any positive real number, and in what follows the first inequality in each pair will still be valid in this case.

The main duality result from geometry of numbers, namely \eqref{eqdualCassels}, enables one to deduce that 
\begin{equation*}
\begin{cases}
\lambda_{d+1-k}  \left(\ccet{ Q, (1-\eps) \pphk(Q)}, \Lamet\right) < d!, \\
\lambda_{d+1-k}\left(\ccet{ Q, (1+\eps) \pphk(Q)}, \Lamet\right) \geq 1
\end{cases}
\end{equation*}
Using \eqref{eqinclus} and the fact that $2|\al| (1-\eps)\pphk(Q)Q\geq  (n+2)\Cpsipr \Psi_k(Q) \geq 1$ for $\eps \leq 1/2$ (because $\pphk$ takes positive integer values), Lemma~\ref{lemdio} yields
\begin{equation*}
\begin{cases}
\lambda_{d+1-k}  \left(\ccpr{ 2|\al| Q,  (1-\eps)  |\al| ^{-1} \pphk(Q) } , (\det\Lam)^{-2} \Lam \right) < d!, \\
\lambda_{d+1-k}\left( (n+1)^{-1}|\al| \ccpr{ Q, (1+\eps) \pphk(Q)}, \Lam \right) \geq 1.
\end{cases}
\end{equation*}
Using \eqref{eqmultlam} and \eqref{eqhom} we obtain
\begin{equation*}
\begin{cases}
\lambda_{d+1-k}  \left(\ccpr{ 2|\al| Q,   (1-\eps)|\al|^{-1} \pphk(Q)   },  \Lam \right) < (\det\Lam)^{ 2}d!, \\
\lambda_{d+1-k}\left(  \ccpr{  (n+1)^{-1}|\al| Q , (1+\eps) (n+1) |\al|^{-1} \pphk(Q)}, \Lam \right)\geq 1.
\end{cases}
\end{equation*}
Now \eqref{eqmult} implies
\begin{equation*}
\begin{cases}
\lambda_{d+1-k}  \left(\ccpr{ 2|\al|  (\det\Lam)^{ 2}d! Q, ((\det\Lam)^{ 2}d! |\al|)^{-1}(1-\eps) \pphk(Q)},  \Lam \right) < 1, \\
\lambda_{d+1-k}(  \ccpr{((1+\eps)(n+1))^{-1}|\al| Q, (1+\eps)^2 (n+1) |\al|^{-1} \pphk(Q)}, \Lam )\geq 1+\eps.
\end{cases}
\end{equation*}
Since for any $Q'\geq 1$, the set of $K > 0$ such that $\lambda_{d+1-k}(\ccpr{Q',K},\Lam)\leq 1$  is an interval between 0 and $\ppsdmk(Q')$, we obtain
\begin{equation*}
\begin{cases}
((\det\Lam)^{ 2}d! |\al|)^{-1}(1-\eps) \pphk(Q)  \leq  \ppsdmk (    2|\al|  (\det\Lam)^{ 2}d! Q), \\
(1+\eps)^2 (n+1) |\al|^{-1} \pphk(Q)  \geq \ppsdmk (((1+\eps)(n+1))^{-1}|\al| Q).
\end{cases}
\end{equation*}
The first inequality if false when  $  (1-\eps) \pphk(Q)$ is replaced with a sufficiently large real number: this concludes the proof that $\Psi_k(Q)$  exists. Moreover, 
by letting $\eps$ tend to zero (so that $(1+\eps)(n+1) \leq n+2$), these inequalities  conclude also the proof of Theorem \ref{thdio}.
\end{proof}

\begin{proof}[Proof of Proposition~\ref{corutile}] 
Using \eqref{eqmult} and the fact that the infimum in the definition of $\Psi_d(Q)$ is attained, we have
$$d \lambda_d( \cc{d^{-1}Q, d\Psi_d(Q)}, \Lambda) =  \lambda_d( \cc{Q,  \Psi_d(Q)}, \Lambda) \leq 1$$
so that there exists a  basis of $\Lam$ contained in   $\cc{d^{-1}Q, d\Psi_d(Q)}  $ (using   the remark after the corollary of Theorem VII in Chapter VIII of \cite{Cassels}, recalled in \S \ref{ssec2}). Let us denote by $x_1,\dots,x_d$ this basis, and fix $j\in \{1,\dots,d\}$. Since $x_j\in\cc{d^{-1}Q, d\Psi_d(Q)} $, if we write $x_j=(x_{j,0}, \dots, x_{j,n-1})$, we have
\begin{equation}\label{incorutile}
\begin{cases}
| x_{j} | \leq d\Psi_d(Q),  \\ 
| x_{j,0} \alpha - \alpha_0 x_{j}| \leq dQ^{-1}. 
\end{cases}
\end{equation}
We claim that $x_{j,0}\neq 0$. Indeed, if $x_{j,0}=0$, then the second inequality in~\eqref{incorutile} gives $|x_{j,i} \alpha_0 | \leq dQ^{-1}$ for all $i \in \{1,\dots,n-1\}$. As $|\alpha|=|\alpha_0|$, this implies that $|x_{j,i}| \leq d(|\alpha|Q)^{-1}$ but since $Q>d|\alpha|^{-1}$ by assumption, this gives $|x_{j,i}|<1$. Now $x_{j,i}\in\Z$, so this necessarily implies $x_{j,i}=0$ and together with $x_{j,0}=0$, the vector $x_j$ has to be zero. This contradicts the fact that $x_j$ is an element of a basis, and proves the claim. Now we can define $\omega_j=x_{j,0}^{-1}\alpha_0 x_j$, then $\omega_j$ is $T_j$-periodic with $T_j=x_{j,0}\alpha_0^{-1}$, and replacing $x_j$ by $-x_j$ if necessary, we can assume $T_j>0$. Since $T_j\omega_j=x_j$, this proves the first part of the statement. Then from the second inequality in~\eqref{incorutile}, we have
\[ |T_j\alpha-T_j\omega_j|=| x_{j,0} \alpha_0^{-1} \alpha - x_{j}|\leq d(|\alpha|Q)^{-1}, \]
hence
\[ |\alpha-\omega_j|\leq d(|\alpha|T_jQ)^{-1} \]
and from the first inequality in~\eqref{incorutile},
\[ |\alpha|^{-1}\leq T_j \leq |\alpha|^{-1}d\Psi_d(Q),  \]
which gives the second part of the statement. This concludes the proof.
\end{proof}

\subsection{Minimal families} \label{subsecminifam}

In this section we prove Proposition \ref{proppsietpsiprime} by  interpreting the functions  $\Psi_k$ and $\Psi'_k$ in the spirit of the sequence of minimal points introduced by Davenport and Schmidt (see \cite{DS67}, \S 3 or \cite{DS}, \S 3 and \S 7), even though a direct proof would be possible. This interpretation seems interesting in itself, and it will be useful to compute these functions explicitly in the case $n=2$ (see \S \ref{subsecunnb}).

\begin{proof}[Proof of Proposition  \ref{proppsietpsiprime}]
Let $k \in \und$, and denote by $\Ek$ the (countable) set of all families $\es  = (e_1,\ldots,e_k)$ consisting in $k$ linearly independent vectors in $\Z^n\cap F$. Letting $e_p  = (e_{p,0},\ldots,e_{p,n-1})$ for any $p \in\unk$, we put 
$$ | \es |  = \max_{1\leq p \leq k}   \max_{0\leq j \leq n-1} | e_{p,j}|.$$
We fix an ordering on $\Ek$ (given by a bijective map of $\Ek$ to $\N$) such that if $|\es| < |\es'|$, then $\es \mbox{ comes before } \es'$. To study the function $\Psi_k$, we let
$$\eta(\es) = \max_{1\leq p \leq k} \max_{1\leq j \leq n-1} | e_{p,0}\al_j - e_{p,j} \al_0|.$$
Notice that for any $\es\in\Ek$, $|\es|$ is a positive integer and $\eta(\es)$ is a positive real number; there exist values of $\eta(\es)$ arbitrarily close to 0 (because $\Psi_k(Q)$  exists for any $Q\geq 1$).

Recall from  Definition \ref{defpsi} (\S \ref{ssec1}) that $\Cpsipr$ is the least value of $|\es|$ with $\es\in\Ek$. 
For any real $X \geq \Cpsipr$ we consider the  set of $\es\in\Ek$ such that $|\es| \leq X$. Among this finite non-empty set, we consider the subset consisting in all $\es$ for which $\eta(\es)$ takes its minimal value. We call {\em minimal family corresponding to $X$} the element in this subset which comes first  in the ordering  we have fixed on $\Ek$. The choice of this ordering is not important, but choosing one in advance enables us  to make consistent choices (as $X$ varies): for instance, changing some $e_p$ into $-e_p$, or making a permutation of $e_1,\ldots,e_k$, leaves $|\es|$ and $\eta(\es)$ unchanged. Anyway the most important for us will be $|\es|$ and $\eta(\es)$ for minimal families $\es$, and these values do not depend on the ordering we choose.

Let $\Fk$ be the set of all minimal families  $\es$ (corresponding to some  $X \geq \Cpsipr$) such that  $\eta(\es)\leq 1$. Then $\Fk$ is infinite, countable, and for any distinct $\es, \es'\in\Fk$ we have $|\es|\neq |\es'|$. Let $( \el  )_{\ell \geq 1}$ denote the sequence of all elements of $\Fk$, ordered in such a way that the integer  sequence $(|\el |)_{\ell \geq 1}$ increases to $+\infty$. Then   $ \el $ is the  minimal family corresponding to all  $X$ in the range $|\el| \leq X < |\elpu|$, and   the real sequence $(\eta(\el))_{\ell \geq 1}$ decreases to 0.

For any $Q$ such that $\eta(\el)^{-1} < Q \leq \eta(\elpu)^{-1} $ with $\ell \geq 1$, we have $\Psi_k(Q) = |\elpu|$ because the infimum in the definition of $\Psi_k(Q)$ is attained by the family $\elpu$. In the same way, we have   $\Psi_k(Q) = |\eun|$ for any $Q$ such that $1\leq Q \leq \eta(\eun)^{-1}$. Therefore $\Psi_k$ is constant on each interval $] \eta(\el)^{-1}, \eta(\elpu)^{-1} ]$ (and also on $[1,  \eta(\eun)^{-1}]$); it is left-continuous, and its values are positive integers.

\bigskip

A similar interpretation holds for $\Psi'_k$, letting
$$\eta'(\es) = \max_{1\leq p \leq k}| e_p \cdot \al | $$
for any $\es = (e_1,\ldots,e_k)\in\Ek$. 
For any real $X \geq \Cpsipr$ we consider   the  set of $\es\in\Ek$ such that $|\es| \leq X$. Among this finite set, we focus on  the subset consisting in all $\es$ for which $\eta'(\es)$ takes its minimal value. Within this subset,   we call {\em minimal family corresponding to $X$} the one   which comes first  in the ordering we have fixed on $\Ek$. Of course   the minimal families here are not the same as the ones above (which are defined in terms of   $\eta$). Since the minimal families (corresponding to some $X \geq \Cpsipr$) make up an infinite countable set such that  $|\es| \neq |\es'|$ as soon as $\es\neq \es'$, they can be ordered in a sequence  $(|\eprl |)_{\ell \geq 1}$ with the following properties:  the integer  sequence $(|\eprl |)_{\ell \geq 1}$ increases to $+\infty$;  the real sequence $(\eta'(\eprl))_{\ell \geq 1}$ decreases to 0; $\eprl$ is the  minimal family corresponding  to all $X$ in the range $|\eprl|   \leq X < |\eprlpu|$. 

For any $Q$ such that $|  \eprl |  \leq Q<|  \eprlpu |  $ with $\ell \geq 1$, we have $\Psi'_k(Q) = \eta(  \eprl)^{-1}$ because the supremum in the definition of $\Psi'_k(Q)$ is attained by the family $\eprl$.
 Therefore $\Psi'_k$ is constant on each interval $[ |  \eprl |  ,  |  \eprlpu |  [ $; it is right-continuous, and its  points of discontinuity are positive integers. 
 This concludes the proof of Proposition \ref{proppsietpsiprime}.
\end{proof}

\subsection{The case $n=2$} \label{subsecunnb}

Let us focus now on the case of just one number, in which the functions $\Psi_k$ and $\Psi'_k$ can be made explicit in terms of continued fractions. Eventhough  this section won't be used in the rest of the paper, we think it is   an instructive example. In precise terms, we take $n=2$, $\al_0 = 1$ and $\al_1 = \xi$ with $\xi\in\R\setminus\Q$ and  $0 <  \xi <1$. We have $d=2$ and $F = \R^2$.

\newcommand{\pej}{|q_j \xi - p_j|}
\newcommand{\pejpu}{|q_{j+1} \xi - p_{j+1}|}
\newcommand{\pejmu}{|q_{j-1} \xi - p_{j-1}|}
\newcommand{\pejt}{|q_{j,t} \xi - p_{j,t}|}
\newcommand{\pejtpu}{|q_{j,t+1} \xi - p_{j,t+1}|}
\newcommand{\pejtmu}{|q_{j,t-1} \xi - p_{j,t-1}|}
\newcommand{\pejajpumu}{|q_{j,a_{j+1}- 1} \xi - p_{j,a_{j+1}-1}|}
\newcommand{\peajpusurtrois}{|q_{j,a_{j+1}/3}\xi - p_{j,a_{j+1}/3}|}

We denote by $p_j/q_j$ the $j$-th convergent in the continued fraction expansion of $\xi$, with $p_0=0$, $q_0=1$, $p_{j+1} = a_{j+1} p_{j } + p_{j-1}$ and $q_{j+1} = a_{j+1} q_{j } + q_{j-1}$ for any $j \geq 1$ where $a_j$ is the $j$-th partial quotient (see for instance \cite{Schmidt} or \cite{HW}). We also need to consider the semi-convergents defined for any $j \geq 1$ and any integer $t$ with $0\leq t \leq a_{j+1}$ by
$$q_{j,t} = t q_j + q_{j-1}, \quad  p_{j,t} = t p_j + p_{j-1}.$$
Notice that 
$$q_{j,0} = q_{j-1}, \, p_{j,0} = p_{j-1}, \quad q_{j,a_{j+1}} = q_{j+1}, \, p_{j,a_{j+1}} = p_{j+1},$$
and that for any $t \in \{0,\ldots,a_{j+1}\}$:
\begin{equation} \label{eqepsj}
| q_{j,t}\xi - p_{j,t}|  = | q_{j-1}\xi-p_{j-1}| - t   | q_j   \xi-p_j| = (a_{j+1}+\eps_j-t) \pej > 0
\end{equation}
since the sign of $q_{n }\xi-p_{n }$ is that of $(-1)^n$; here $\eps_j\in]0,1[$ is such that $\eps_j \pej = \pejpu$. In particular this quantity is a decreasing function of $t\in\{0,\ldots,a_{j+1}\}$, and we have 
$| q_{j,t}\xi - p_{j,t}|  > | q_{j }\xi - p_{j }|$ if $t < a_{j+1}$.
 Let us recall also that 
$$q_{j+1}<  \pej^{-1} < q_{j+1}+q_j=q_{j+1,1} < 2 q_{j+1}.$$
These relations will be used repeatedly below, without explicit reference.

To compute the functions $\Psi_k$ and $\Psi'_k$, we recall that 
for any $Q \geq 1$ there is an integer $j$, and only one, such that $q_j \leq  Q < q_{j+1}$. There is also a unique $t$ such that $0\leq t <  a_{j+1}$ and $q_{j,t} \leq  Q < q_{j,t+1}$.

\begin{proposition}\label{propexplipsipsipr}
For any $j$ and any $Q\geq 1$ we have:
\[ \Psi_1(Q) = q_j, \quad \pejmu ^{-1}< Q \leq \pej^{-1},\]
\begin{equation*}
\Psi_2(Q)=
\begin{cases}
q_{j,t+1}, \quad \pejt  ^{-1}< Q \leq \pejtpu^{-1}, \quad 0\leq t \leq a_{j+1}-2, \\
q_{j+1}, \quad \pejajpumu   ^{-1}< Q \leq \pej ^{-1}, 
\end{cases}
\end{equation*}
and
\[ \Psi'_1(Q) = \pej^{-1}, \quad q_j\leq  Q < q_{j+1},\]
\begin{equation*}
\Psi'_2(Q)=
\begin{cases}
\pejmu^{-1}, \quad q_j\leq  Q < q_{j, 1}, \\
\pejt^{-1}, \quad q_{j,t}  \leq  Q < q_{j,t+1}, \quad 1\leq t \leq a_{j+1}-1. 
\end{cases}
\end{equation*}
These functions satisfy the following inequalities for any $Q\geq 1$:
\[ \frac13 \Psi'_2(Q/3) < \Psi_1(Q)< \Psi'_2(Q), \quad \frac13 \Psi'_1(Q/3) < \Psi_2(Q) < \Psi'_1(Q).\]
\end{proposition}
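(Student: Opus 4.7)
The plan is to reduce both Diophantine problems to classical one-dimensional approximation of $\xi$ and then appeal to the theory of continued fractions. Since $\alpha=(1,\xi)$ with $\alpha_0=1$, for any $x=(x_0,x_1)\in\Z^2$ we have $|x_0\alpha-\alpha_0 x|=|x_0\xi-x_1|$ and $x\cdot\alpha=x_0+x_1\xi$, so Definitions~\ref{defphi} and~\ref{defpsi} become statements about $|q\xi-p|$ for integer pairs $(p,q)$; Proposition~\ref{proppsietpsiprime} then identifies $\Psi_k$ and $\Psi'_k$ through the sequence of minimal families. First I would compute $\Psi_1$ and $\Psi'_1$, which involve only one best approximation: the classical best-approximation theorem of the second kind gives $\min_{p\in\Z}|q\xi-p|=\pej$ as the minimum over $1\leq q\leq Q$ when $q_j\leq Q<q_{j+1}$, and this immediately yields $\Psi'_1(Q)=\pej^{-1}$, and by inversion $\Psi_1(Q)=q_j$ for $Q\in(\pejmu^{-1},\pej^{-1}]$ (using $p_j\leq q_j$, which holds since $0<\xi<1$, so that $|(q_j,p_j)|=q_j$).

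Next I would compute $\Psi_2$ and $\Psi'_2$. Given the optimal first vector $(q_j,p_j)$, any linearly independent second vector that is also a good approximation must come from a semi-convergent $(q_{j,t},p_{j,t})$ (with $t=0$ recovering $(q_{j-1},p_{j-1})$): the determinant with $(q_j,p_j)$ equals $(-1)^j$, and the approximation value $|q_{j,t}\xi-p_{j,t}|=\pejmu-t\pej$ decreases in $t$ by~\eqref{eqepsj}. For $\Psi'_2(Q)$ with $Q\in[q_{j,t},q_{j,t+1})$, the optimal pair uses the largest admissible $t$, giving $\Psi'_2(Q)=\pejt^{-1}$ (which reads $\pejmu^{-1}$ when $t=0$). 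Dually, for $\Psi_2(Q)$ with $Q^{-1}\in[\pejtpu,\pejt)$ and $0\leq t\leq a_{j+1}-2$, the second vector must be a semi-convergent $(q_{j,s},p_{j,s})$ with $|q_{j,s}\xi-p_{j,s}|\leq Q^{-1}$, which forces $s\geq t+1$; minimality gives $s=t+1$ and hence $\Psi_2(Q)=q_{j,t+1}$. The remaining range $Q^{-1}\in[\pej,\pejajpumu)$ is handled separately: no semi-convergent with $s\leq a_{j+1}-1$ fits, but $(q_{j+1},p_{j+1})$ does (since $\pejpu<\pej\leq Q^{-1}$), yielding $\Psi_2(Q)=q_{j+1}$.

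The main obstacle is rigorously excluding all non-(semi-)convergent competitors. To show that any $(a,b)\in\Z^2$ linearly independent from $(q_j,p_j)$ with $|a\xi-b|\leq Q^{-1}$ satisfies $|a|\geq q_{j,t+1}$ in the relevant range, I would expand $(a,b)=m(q_j,p_j)+n(q_{j-1},p_{j-1})$ in the $\Z$-basis of $\Z^2$ formed by two consecutive convergents (determinant $\pm 1$): linear independence with $(q_j,p_j)$ forces $n\neq 0$, and $a\xi-b=m(q_j\xi-p_j)+n(q_{j-1}\xi-p_{j-1})$, with opposite-sign terms of magnitudes $\pej$ and $\pejmu$. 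The constraint on the absolute value then pushes $(m,n)$ near the line $m\pej=n\pejmu$, and minimizing $|a|=|mq_j+nq_{j-1}|$ subject to this constraint forces $(m,n)=(t+1,1)$, exactly the semi-convergent. Finally, the inequalities $\Psi_1(Q)<\Psi'_2(Q)$ and $\Psi_2(Q)<\Psi'_1(Q)$ follow by direct comparison of the explicit formulas, using the classical bounds $q_{j+1}<\pej^{-1}<q_j+q_{j+1}$ and $\pejmu<q_j^{-1}$; for instance $\Psi_1(Q)=q_j<\pejmu^{-1}\leq\Psi'_2(Q)$. The reverse inequalities $\frac13\Psi'_2(Q/3)<\Psi_1(Q)$ and $\frac13\Psi'_1(Q/3)<\Psi_2(Q)$ are slightly sharper specializations, for $n=2$, of the general duality Theorem~\ref{thdio}, and can be obtained by tracking, for each $Q$, where $Q/3$ lands in the semi-convergent scale and bounding the resulting ratio explicitly.
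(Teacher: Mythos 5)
Your derivation of the explicit formulas follows essentially the same route as the paper's: identify the optimal vectors as convergents and semi-convergents, and exclude every other competitor by expanding an arbitrary candidate in the $\Z$-basis formed by two consecutive convergents and exploiting the alternating sign of $q_j\xi-p_j$. The paper organizes this through the minimal-family machinery of Proposition~\ref{proppsietpsiprime} and expands in the basis $\bigl((q_j,p_j),(q_{j,t},p_{j,t})\bigr)$ after a determinant bound, but the substance is the same, and your sketch of the exclusion step is completable: writing $(a,b)=m(q_j,p_j)+n(q_{j-1},p_{j-1})$ with $n\neq 0$, the cases $|n|=1$ and $|n|\geq 2$ (and $m\leq 0$) force $|a|\geq q_{j,t+1}$ in the relevant range, and the symmetric argument you leave implicit for $\Psi'_2$ runs the same way. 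One small point to make explicit: for $\Psi_2$ and $\Psi'_2$ the optimal pair need not contain $(q_j,p_j)$ a priori, but since two independent vectors cannot both be proportional to it, the exclusion bound applied to one member of the pair suffices.

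The gap is in the last assertion. The inequalities $\frac13\Psi'_{3-k}(Q/3)<\Psi_k(Q)$ are \emph{not} obtainable as specializations of Theorem~\ref{thdio}: with $n=2$, $|\al|=1$, $\det\Lam=1$ that theorem gives $\frac13\Psi'_{3-k}(c_2Q)\leq\Psi_k(Q)$ only with $c_2$ \emph{strictly} less than $c_1=\frac13$ (even after the improvement mentioned right after its statement) and only for $Q\geq Q_0$, whereas the proposition asserts the bound with $Q/3$ inside, strictly, and for all $Q\geq 1$; since $\Psi'_{3-k}$ is non-decreasing, $\Psi'_{3-k}(c_2Q)\leq\Psi'_{3-k}(Q/3)$ goes the wrong way, so the theorem is genuinely weaker here --- indeed the paper itself remarks that the bound is ``essentially the same'' as Theorem~\ref{thdio} but that the proof must instead go through the explicit formulas. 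Your fallback (``track where $Q/3$ lands in the semi-convergent scale'') is the correct route, but the entire content of that step is a pair of quantitative facts which your plan does not supply, namely \eqref{equn1} and \eqref{equn2}: $|q_{j,a_{j+1}/3}\xi-p_{j,a_{j+1}/3}|^{-1}<3q_j$ and $q_{j,(a_{j+1}+1)/3}>\frac13|q_j\xi-p_j|^{-1}$ (with $q_{j,t},p_{j,t}$ extended to non-integer $t$), proved from \eqref{eqepsj} together with $2q_j|q_{j-1}\xi-p_{j-1}|>1$ and $|q_j\xi-p_j|^{-1}<q_{j+1}+q_j$, plus the boundary cases $t=0$ and $t=a_{j+1}-1$ for $t=\lfloor a_{j+1}/3\rfloor$. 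Without these estimates the specific constant $\frac13$ is not justified, so this part of the argument is incomplete as written; the upper bounds $\Psi_1(Q)<\Psi'_2(Q)$ and $\Psi_2(Q)<\Psi'_1(Q)$, by contrast, do follow from your explicit formulas exactly as you indicate.
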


The lower bound for $\Psi_k$ is essentially the same as the one provided by Theorem \ref{thdio} in this case (see the remark after the statement of this result). Of course the proof is different (and we find it instructive) because it relies on the explicit determination of the functions. By following this proof it is not difficult to construct examples of numbers $\xi$ for which this lower bound is essentially an equality for infinitely many $Q$.

On the other hand, the upper bound on $\Psi_k$ refines upon the one provided by Theorem \ref{thdio}, and can also be seen to be essentially an equality for infinitely many $Q$, if $\xi$ is properly chosen.

\begin{proof}[Proof of Proposition \ref{propexplipsipsipr}]  We compute $\Psi_k$ and $\Psi'_k$ by determining the sequences of minimal families defined in \S \ref{subsecminifam}.   Let us start by computing $\Psi_1$ and $\Psi'_1$, that is by taking   $k=1$. For a non-zero vector $\es = e_1 = (e_{1,0},e_{1,1}) \in \Z^2$ we have $|\es| = \max (| e_{1,0}|,|e_{1,1}|)$, 
$$\eta(\es) = |e_{1,0}\xi-e_{1,1}|, \quad \etapr(\es) = |e_{1,0}+ e_{1,1} \xi |.$$
It is a classical property of continued fractions that $((q_\ell, p_\ell))_{\ell \geq 0} $ is the sequence of minimal families defined in terms of $\eta$ and related to $\Psi_1$, if the ordering on $\Eun$ is chosen properly. In the same way $((p_\ell, -q_\ell))_{\ell \geq 0} $ is the  one    related to $\Psi'_1$. These facts lead immediately to the expressions for $\Psi_1$ and $\Psi'_1$ in Proposition \ref{propexplipsipsipr} (see the end of  \S \ref{subsecminifam}). 

\smallskip

The case $k=2$ is slightly more complicated.  
We let 
$$\es_{j,t} = ( (q_j,p_j),  (q_{j,t}, p_{j,t})).$$
In particular   $\es_{j,0}$ consists in two consecutive convergents; so does $\es_{j,a_{j+1}}$, which is equal to $  \es_{j+1,0}$ up to permuting the two vectors. The determinant of the vectors ${\tiny \left(\begin{array}{c} q_{j} \\ p_j \end{array}\right)}$ and  ${\tiny \left(\begin{array}{c} q_{j,t} \\ p_{j,t} \end{array}\right)}$ is equal to that of ${\tiny \left(\begin{array}{c} q_{j} \\ p_j \end{array}\right)}$ and  ${\tiny \left(\begin{array}{c} q_{j-1} \\ p_{j-1} \end{array}\right)}$, so that it is $\pm 1$: the vectors in the family $\es_{j,t}$ make up a basis of $\Z^2$.

We have
$$|\es_{j,t} |  = q_{j,t} =   t q_j + q_{j-1}$$   
if $t \geq 1$ and $|\es_{j,0} |  =  q_j$, whereas
$$  \eta(\es_{j,t} ) =\pejt =  | q_{j-1}\xi-p_{j-1}| - t   | q_{j }\xi-p_{j }| > 0$$ 
if $t \leq a_{j+1}-1$ and $\eta(\es_{j,a_{j+1}} ) = \pej$. 

Let us prove that (up to changing the ordering we have fixed on $\Ede$) the sequence $(\el)_{\ell \geq 1} $ of minimal families constructed in \S \ref{subsecminifam} is exactly the sequence of families $\es_{j,t}$ with $j\geq 1$ and $0\leq t \leq a_{j+1}-1$, indexed in such a way that $|\es_{j,t}|$ is increasing. With this aim in view, let $X\geq 1$ and denote by $\es=(e_1,e_2)$ the minimal family corresponding to $X$. Let $j\geq 1$ and $t\in\{0,\ldots,a_{j+1}-1\}$ be such that $|\es_{j,t}|\leq X < |\es_{j,t+1}|$ (that is, $\max(q_j,q_{j,t})\leq X < q_{j,t+1}$). Let $e = (q, p)$ denote either $e_1$ or $e_2$. We have $|q| \leq X < q_{j+1}$ and $|q\xi-p| \leq \eta(\es_{j,t})\leq \pejmu$ so that:
\begin{eqnarray*}
\left| \det \left[\begin{array}{cc} q & q_j \\ p & p_j \end{array} \right] \right| 
&=&
\left| \det \left[\begin{array}{cc} q & q_j \\ q\xi - p & q_j\xi - p_j \end{array} \right] \right| \\
&\leq& |q| \cdot \pej + q_j |q\xi-p| < q_{j+1} \pej + q_j \pejmu < 2
\end{eqnarray*}
so that this determinant is equal to $-1$, 0, or 1. Since $\es_{j,t}$ is a basis of $\Z^2$, this yields $e = (q,p)  =\al (q_j,p_j) + \beta(q_{j,t}, p_{j,t})$ with $\al\in\Z$ and $\beta\in\{-1,0,1\}$. Changing $(q,p)$ into $(-q,-p)$ if necessary (which follows from a suitable change in the ordering on $\Ede$), we may assume that $e = \al (q_j,p_j)$ with $\al \geq 1$ if $\beta= 0$, and $\beta=1 $ otherwise. In the latter case we have $q = q_{j-1} + (t+\al)q_j$ with $\al \leq 0$ (since $|q| \leq X < q_{j,t+1}$),  $t+\al \geq 0$ (since $|q\xi-p|  \leq  \pejmu$, using the fact that $q_{j-1}\xi-p_{j-1}$ and $q_j\xi-p_j$ have opposite signs), and finally $\al = 0$ because $|q\xi-p|\leq \eta(\es_{j,t}) = \pejt$. Therefore we have proved that (up to permuting the two vectors $e_1$ and $e_2$ which make up the family $\es$) we have $e_1 =  \al (q_j,p_j)$ with $\al \geq 1$ and $e_2 =   (q_{j,t},p_{j,t})$. Since $\eta(\es) \leq \eta(\es_{j,t})$ we deduce that  $\eta(\es) = \eta(\es_{j,t})$. Now $\es$ comes before $\es_{j,t}$ in the ordering on $\Ede$ (by definition of a minimal family), so that $|\es| \leq |\es_{j,t}| $ (thanks to the assumption we make, throughout the paper, on the orderings we choose on $\Ek$). If $t=0$ this implies $\al = 1$ and $\es = \es_{j,t}$. If $t \geq 1$ then $\al$ might be different from 1, but since $|\es| = |\es_{j,t}|$ and $\eta(\es) = \eta(\es_{j,t})$ we may change the ordering on $\Ede$ in such a way that $\es_{j,t} $ comes first, so that $\es = \es_{j,t}$ in this case too. This concludes the proof that $\es_{j,t}$ is the minimal family corresponding to $X$. 

The values of $\Psi_2(Q)$ follow immediately (see the end of \S \ref{subsecminifam}). The proof is similar for $\Psi'_2(Q)$: the  sequence  of minimal families  defined in terms of $\eta'$ and corresponding to $\Psi'_2$ is given by the families $((p_j,-q_j),(p_{j,t}, -q_{j,t}))  $ if the ordering on $\Ede$ is appropriate.

\bigskip

To conclude the proof of  Proposition \ref{propexplipsipsipr}, let us compare $\Psi_k$ and $\Psi'_{3-k}$, starting with the inequality   $\Psi_1(Q) < \Psi'_2(Q)$. Let $j$ be such that $\pejmu^{-1}< Q \leq \pej^{-1}$; then $\Psi_1(Q) = q_j$. Since $\Psi'_2$ is non-decreasing and $\pejmu^{-1}<q_{j,1}$, we may assume that  $\pejmu^{-1}< Q<q_{j,1}$ so that $\Psi'_2(Q)= \pejmu^{-1}$ and the conclusion follows.

Let us prove in the same way that $\Psi_2(Q) <  \Psi'_1(Q)$. Let $j$ be such that $q_j \leq Q  < q_{j+1}$, so that  $\Psi'_1(Q) = \pej^{-1}$. Since $Q < q_{j+1} \leq \pej^{-1}$ and $\Psi_2$ is non-decreasing, we have $\Psi_2(Q)\leq \Psi_2(\pej^{-1})=q_{j+1} <  \Psi'_1(Q)$.

The remaining two inequalities are slightly less straightforward. We shall begin with the following facts:
\begin{equation}\label{equn1}
\peajpusurtrois ^{-1}  <  3 q_j
\end{equation}
and
\begin{equation}\label{equn2}
q_{j, (a_{j+1}+1)/3} > \frac13 \pej^{-1},
\end{equation}
where $q_{j,t} $ and $p_{j,t}$ are defined by 
$  t q_j + q_{j-1}$   and  $  t p_j + p_{j-1} $ even if $t$ is not an integer. 

To prove   \eqref{equn1}, we write using \eqref{eqepsj}:
$$3q_j \peajpusurtrois = q_j (2a_{j+1}+3\eps_j)\pej >  2 q_j ( a_{j+1}+ \eps_j)\pej =  2 q_j  \pejmu > 1,$$
whereas   \eqref{equn2} follows from
$$\pej^{-1} < q_{j+1}+q_j=q_{j-1}+(a_{j+1}+1)q_j< 3\left(q_{j-1}+\frac{a_{j+1}+1}3 q_j\right).$$
Let us deduce now that $\frac13 \Psi'_2(Q/3) <  \Psi_1(Q)$ for any $Q$. Let $j$ be such that $\pejmu^{-1} < Q \leq \pej^{-1}$, so that $\Psi_1(Q)=q_j$. Since $\Psi'_2$ is non-decreasing we may assume $Q= \pej^{-1}$. Let $t$ denote the integer part of $a_{j+1}/3$. Then  \eqref{equn2} yields $Q/3 < q_{j,t+1}$ so that, using  \eqref{equn1} and the fact that $\Psi'_2$ is non-decreasing:
$$\Psi'_2(Q/3) \leq \pejt^{-1} <  3q_j=3\Psi_1(Q)$$
if $t\geq 1$, and 
$$\Psi'_2(Q/3)\leq \pejmu^{-1} < 2q_j=2\Psi_1(Q)$$
if $t=0$. This concludes the proof that $\frac13 \Psi'_2(Q/3) < \Psi_1(Q)$.

Let us prove now, along the same lines,  that $\frac13 \Psi'_1(Q/3) <  \Psi_2(Q)$. Let $j$ be such that $q_j \leq Q/3 < q_{j+1}$; then $ \Psi'_1(Q/3) = \pej^{-1}$. Since $\Psi_2$ is non-decreasing we may assume that $Q= 3q_j$. Letting $t$ denote the integer part of $a_{j+1}/3$,    \eqref{equn1} yields $ \pejt^{-1} < Q$. If $t \leq a_{j+1}-2$ we obtain  $\Psi_2(Q)\geq q_{j,t+1}> \frac13 \Psi'_1(Q/3)$ using \eqref{equn2}; if $t=a_{j+1}-1$ then $\Psi_2(Q) \geq q_{j+1} > \frac12 \Psi'_1(Q/3)$. In both cases this concludes the proof of Proposition \ref{propexplipsipsipr}. 
\end{proof}

\section{Applications to the KAM and Nekhoroshev theorems}\label{sec3}

Based on the results of the previous section, we describe now a new approach to the perturbation theory for quasi-periodic solutions. For simplicity, we will restrict to perturbations of constant vector fields on the $n$-dimensional torus, but clearly the approach can be extended to other situations. We will give stability results both in ``finite" and ``infinite" time, which correspond respectively to the construction of a ``partial" normal form, for any $1\leq d \leq n$, and to an ``inverted" normal form for $d=n$. The first result corresponds to a Nekhoroshev type theorem, while the second result is the KAM theorem for vector fields.

We state our main stability results, Theorem~\ref{thmfini} and Theorem~\ref{thminfini}, in \S\ref{ss30}. Then in \S\ref{ss31} we prove a periodic averaging result (Lemma~\ref{one}), which will be the only analytical tool in our proofs. A quasi-periodic averaging result (Lemma~\ref{multi}) will then be easily obtained from the periodic averaging result through approximation by periodic vectors (Proposition~\ref{corutile} in \S\ref{subseccordio}), and this constitutes the main novelty in our proofs. Finally, Theorem~\ref{thmfini} and Theorem~\ref{thminfini} will be proved respectively in \S\ref{ss33} and \S\ref{ss34} by applying inductively the quasi-periodic averaging lemma: the proof of Theorem~\ref{thmfini} consists of a finite iteration and is direct while the proof of Theorem~\ref{thmfini} relies on an infinite induction and is more implicit. 

\subsection{Main results}\label{ss30}

Let us first describe the setting. Let $n\geq 2$, $\T^n=\R^n/\Z^n$ and $\T_{\C}^{n}=\C^n/\Z^n$. For $z=(z_1,\dots,z_n)\in \C^n$, we define $|z|=\max_{1\leq i \leq n}|z_i|$, and given $s>0$, we define a complex neighbourhood of $\T^n$ in $\T_{\C}^{n}$ by 
\[ \T^n_s=\{\theta\in \T_{\C}^{n}\; | \; |\mathrm{Im}(\theta)|< s\}. \]
Given $\alpha\in\R^n\setminus\{0\}$, we will consider bounded real-analytic vector fields on $\T^n_{s}$, of the form
\begin{equation}\label{H}
X=X_\alpha+P, \quad X_\alpha=\alpha, \quad |P|_{s}=\sup_{z\in \T^n_s}|P(z)|\leq \varepsilon. \tag{$*$}
\end{equation}
By real-analytic, we mean that the vector field is analytic and is real valued for real arguments. For any such vector field $Y$, we shall denote by $Y^t$ its time-$t$ map for values of $t\in\C$ which makes sense, and given another vector field $Z$, we denote by $[Y,Z]$ their Lie bracket. Moreover, for a real-analytic embedding $\Phi : \T^n_{r} \rightarrow \T_{\C}^{n}$, $r\leq s$, and for a real-analytic vector field $Y$ which is well-defined on the image of $\Phi$, we let $\Phi^*Y$ be the pull-back $Y$, which is well-defined on $\T^n_{r}$.

Finally, if $d$ is the number of effective frequencies of $\alpha$, we denote by $\Psi_{\alpha}=\Psi_d$ the function defined in \S\ref{ssec1}, Definition~\ref{defphi}. From Proposition~\ref{proppsietpsiprime}, $\Psi_\alpha$ is left-continuous and non-decreasing, hence the function
\begin{equation}\label{delta}
\Delta_{\alpha} : [1,+\infty[ \rightarrow [1,+\infty[, \quad \Delta_{\alpha}(Q)=Q\Psi_{\alpha}(Q)
\end{equation}
is left-continuous and increasing. Therefore it has a generalized inverse
\begin{equation}\label{deltastar}
\Delta_{\alpha}^* : [1,+\infty[ \rightarrow [1,+\infty[, \quad \Delta_{\alpha}^*(x)=\sup\{Q \geq 1\; | \; \Delta_{\alpha}(Q)\leq x\} 
\end{equation}
which satisfies $\Delta_{\alpha}^*(\Delta_{\alpha}(Q))=Q$ and $\Delta_{\alpha}(\Delta_{\alpha}^*(x))\leq x$. Moreover, $\Delta_{\alpha}^*$ is both non-decreasing and continuous.

\bigskip
 
Our first result is the following.

\begin{theorem}\label{thmfini}
Let $X=X_\alpha+P$ be as in~(\ref{H}), define 
\[ Q(\varepsilon)=\Delta_{\alpha}^*\left((2\varepsilon)^{-1}\right), \quad \kappa_{d,\alpha}=2^6d^2(2^d-1)|\alpha|^{-1},\] 
and assume that
\begin{equation}\label{thr0}
Q(\varepsilon)>\max(1,d|\alpha|^{-1}), \quad Q(\varepsilon)\geq \kappa_{d,\alpha}s^{-1}.
\end{equation}
Then there exists a real-analytic embedding $\Phi : \T^n_{s/2} \rightarrow \T^n_{s}$ such that
\[ \Phi^*X=X_\alpha+N+R, \quad [X_\alpha,N]=0,  \]
with the estimates $|\Phi-\mathrm{Id}|_{s/2}<d^2|\alpha|^{-1}Q(\varepsilon)^{-1}$ and
\[ |N|_{s/2}<2\varepsilon, \quad |R|_{s/2}< 2\varepsilon \exp\left(-(\ln2)\kappa_{d,\alpha}^{-1}sQ(\varepsilon)\right). \] 
\end{theorem}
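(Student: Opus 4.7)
The plan is to set up a finite iterative Nekhoroshev-style scheme whose elementary step is the quasi-periodic averaging lemma (Lemma~\ref{multi}), which the authors announce as built from the periodic averaging lemma (Lemma~\ref{one}) together with the simultaneous periodic approximation given by Proposition~\ref{corutile}. The iteration will be stopped after a finite number of steps chosen so that the geometric decay of the remainder matches the announced exponential bound.

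First I would freeze $Q:=Q(\varepsilon)=\Delta_\alpha^*((2\varepsilon)^{-1})$. Since $Q>\max(1,d|\alpha|^{-1})$ by hypothesis, Proposition~\ref{corutile} provides $d$ linearly independent periodic vectors $\omega_1,\ldots,\omega_d$ with periods $T_j\leq T^\star:=|\alpha|^{-1}d\,\Psi_\alpha(Q)$ and with $|\alpha-\omega_j|\leq d(|\alpha|T_jQ)^{-1}$. The defining relation $\Delta_\alpha(Q)\leq(2\varepsilon)^{-1}$ can be rewritten as $T^\star\varepsilon\leq d(2|\alpha|Q)^{-1}$, which is the basic smallness input one feeds into the quasi-periodic averaging step.

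Next I would run the induction. Set $\varepsilon_j:=2^{-j}\varepsilon$, $s_j:=s-j\delta$ for a width loss $\delta>0$ to be fixed, and construct inductively a real-analytic embedding $\Phi_j:\T^n_{s_j}\to\T^n_{s_{j-1}}$ so that $\Psi_j:=\Phi_1\circ\cdots\circ\Phi_j$ pulls $X$ back to $X_\alpha+N_j+R_j$ with $[X_\alpha,N_j]=0$ and $|R_j|_{s_j}\leq\varepsilon_j$. One step of the induction is a single application of Lemma~\ref{multi} to $X_\alpha+N_{j-1}+R_{j-1}$, using the same $d$ periodic vectors: its quantitative output will be precisely a halving of the remainder at the expense of the analyticity loss $\delta$, provided $\delta$ is taken proportional to $\kappa_{d,\alpha}/Q$. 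Stopping at $N:=\lfloor\kappa_{d,\alpha}^{-1}sQ\rfloor$, the width budget $N\delta=s/2$ is met exactly by the assumption $Q\geq\kappa_{d,\alpha}s^{-1}$, while
\[ |R|_{s/2}=|R_N|_{s_N}\leq 2^{-N}\varepsilon\leq 2\varepsilon\exp\bigl(-(\ln 2)\kappa_{d,\alpha}^{-1}sQ(\varepsilon)\bigr). \]
The remaining estimates follow by telescoping: since $N_j-N_{j-1}$ and $\Phi_j-\mathrm{Id}$ are produced by the averaging step and are controlled respectively by $|R_{j-1}|_{s_{j-1}}\leq 2^{-(j-1)}\varepsilon$ and by $T^\star|R_{j-1}|_{s_{j-1}}$, the geometric series give $|N|_{s/2}<2\varepsilon$ and $|\Phi-\mathrm{Id}|_{s/2}$ of order $T^\star\varepsilon\lesssim d^2|\alpha|^{-1}Q^{-1}$, matching the stated bound.

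The main obstacle is verifying that the smallness hypothesis of Lemma~\ref{multi} is exactly preserved from one step to the next, with the specific numerical constant $\kappa_{d,\alpha}=2^6 d^2(2^d-1)|\alpha|^{-1}$. The factor $|\alpha|^{-1}d^2$ is traceable to the period bound $T^\star$ and the error bound $d(|\alpha|T_jQ)^{-1}$ from Proposition~\ref{corutile}. The factor $2^d-1=\sum_{i=0}^{d-1}2^i$ reflects the internal structure of Lemma~\ref{multi}: the quasi-periodic averaging is obtained by cascading the $d$ periodic averagings of Lemma~\ref{one} along $\omega_1,\ldots,\omega_d$ with geometrically doubling width losses, so that the total loss inside one macroscopic step is $(2^d-1)$ times that of a single periodic averaging. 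The delicate bookkeeping is therefore to choose $\delta$ small enough to fit this internal cascade while keeping $N\delta=s/2$, which is precisely the role of the numerical threshold appearing in~(\ref{thr0}).
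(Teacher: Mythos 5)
Your proposal follows essentially the same route as the paper: fix $Q=Q(\varepsilon)$, use Proposition~\ref{corutile} (inside Lemma~\ref{multi}) to produce the $d$ periodic approximations, iterate the quasi-periodic averaging lemma with a fixed halving rate $c=1/2$, stop after $m=\lfloor\kappa_{d,\alpha}^{-1}sQ\rfloor$ steps so the total width loss is $s/2$, and collect the geometric series for $N$ and $\Phi-\mathrm{Id}$. The main line of reasoning — uniform step loss $\sigma\asymp C_{d,\alpha}Q^{-1}$, remainder decaying as $2^{-m}\varepsilon$, and the threshold~\eqref{thr0} guaranteeing both $Q>\max(1,d|\alpha|^{-1})$ and $m\geq 1$ — is exactly the paper's. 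One small correction to your side remark on provenance of the constant: the factor $2^d-1$ in $\kappa_{d,\alpha}$ does \emph{not} come from ``geometrically doubling width losses'' inside the cascade of Lemma~\ref{multi}. In the paper's proof of that lemma, the width loss is uniform ($\varsigma=\sigma/d$ at each of the $d$ internal periodic steps); it is the \emph{error} that accumulates by a factor of $2$ per internal pullback (via Lemma~\ref{tech3}, $|(V_{j+1}^1)^*P_j^+|\leq 2|P_j^+|$), giving the internal bound $|P_j^+|\leq(2^j-1)b\varepsilon$ and forcing $b=(2^d-1)^{-1}c$, which is how $2^d-1$ enters the third condition of~(\ref{thr2}). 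This misattribution does not affect the correctness of your argument, since you use Lemma~\ref{multi} as a black box.
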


The theorem states that the perturbed vector field $X=X_\alpha+P$ can be analytically conjugate to a ``partial" normal form $X_\alpha+N+R$, that is a  normal form $X_\alpha+N$ where $N$ commutes with $X_\alpha$, plus a ``small" remainder $R$. In general, this remainder cannot be equal to zero, regardless of the Diophantine properties of $\alpha$. This theorem will be proved by applying finitely many steps of averaging, as usual. The novelty here is that each of this step, which in all other proofs require to estimate small divisors, will be replaced by $d$ elementary steps in which we will only deal with periodic approximations. So it could be said that our proof will reduce the general case $1\leq d\leq n$ to the special case $d=1$. In the case $d=1$, $\alpha$ is a periodic vector and its only periodic approximation is itself: the function $\Psi_{\alpha}(Q)$ is therefore constantly equals to its period $T$, and then $Q(\varepsilon)=|\alpha|(2T\varepsilon)^{-1}$.

Note that Theorem~\ref{thmfini} implies a result of ``stability" in finite time: one can compare the flow of $X_\alpha+P$ to the simpler flow of $X_\alpha+N$, during an interval of time $|t|\leq T(\varepsilon)$ where $T(\varepsilon)$ is essentially the inverse of the size of the remainder $R$. Then, one should probably be able to show that this time $T(\varepsilon)$ cannot be improved ``uniformly" in general (see \cite{Bou11}: the example there is given for $d=n$ and for Hamiltonian vector fields, but the construction should extend to our setting).

The smallness of the remainder is of course entirely tied up with the Diophantine properties of $\alpha$. If $\Delta_{\alpha}^{*}$ grows very slowly, that is if $\Psi_\alpha$ grows very fast, the size of the remainder $R$, which is always strictly smaller than $\varepsilon$, may not be much smaller. However, if $\alpha \in \mathcal{D}_d^\tau$, that is if $\alpha$ is a Diophantine vector with exponent $\tau\geq d-1$, then by Corollary~\ref{cordio}, one can find two positive constants $Q_\alpha \geq 1$ and $C_\alpha>0$ such that for all $Q\geq Q_\alpha$, $\Psi_\alpha(Q)\leq C_\alpha Q^\tau$. The above corollary is therefore an immediate consequence of Theorem~\ref{thmfini}.

\begin{corollary}\label{corfini}
Let $X=X_\alpha+P$ be as in~(\ref{H}), with $\alpha \in \mathcal{D}_d^\tau$, $\tau \geq d-1$, define
\[ E_{\alpha}=\max(1,d|\alpha|^{-1},Q_\alpha), \quad \kappa_{d,\alpha}=2^6d^2(2^d-1)|\alpha|^{-1},  \]
and assume that
\begin{equation}\label{thr00}
\epsilon < (2C_\alpha)^{-1}(E_\alpha)^{-(\tau+1)}, \quad \epsilon \leq (2C_\alpha)^{-1}(\kappa_{d,\alpha} s^{-1})^{-(\tau+1)}. 
\end{equation}
Then there exists a real-analytic embedding $\Phi : \T^n_{s/2} \rightarrow \T^n_{s}$ such that
\[ \Phi^*X=X_\alpha+N+R, \quad [X_\alpha,N]=0  \]
with the estimates $|\Phi-\mathrm{Id}|_{s/2}<d^2|\alpha|^{-1}(2C_\alpha\varepsilon)^{\frac{1}{1+\tau}}$ and
\[ |N|_{s/2}<2\varepsilon, \quad |R|_{s/2}< 2\varepsilon \exp\left(-(\ln2)\kappa_{d,\alpha}^{-1}s(2C_\alpha\varepsilon)^{-\frac{1}{1+\tau}}\right).\] 
\end{corollary}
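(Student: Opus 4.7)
The plan is to deduce the corollary from Theorem~\ref{thmfini} by establishing a polynomial lower bound on $Q(\varepsilon)$ and checking that the two hypotheses \eqref{thr0} are implied by \eqref{thr00}. First I would invoke Corollary~\ref{cordio}: since $\alpha\in\mathcal{D}_d^\tau$, there are constants $Q_\alpha\geq 1$ and $C_\alpha>0$ such that $\Psi_\alpha(Q)=\Psi_d(Q)\leq C_\alpha Q^\tau$ for every $Q\geq Q_\alpha$. Multiplying by $Q$ gives the estimate $\Delta_\alpha(Q)=Q\Psi_\alpha(Q)\leq C_\alpha Q^{\tau+1}$ on the same range.

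Next, I would derive the key estimate $Q(\varepsilon)\geq(2C_\alpha\varepsilon)^{-1/(\tau+1)}$ whenever the right-hand side is $\geq Q_\alpha$. Indeed, setting $Q_*=(2C_\alpha\varepsilon)^{-1/(\tau+1)}$, the previous paragraph gives $\Delta_\alpha(Q_*)\leq C_\alpha Q_*^{\tau+1}=(2\varepsilon)^{-1}$, so by the definition of $\Delta_\alpha^*$ in \eqref{deltastar} we have $Q(\varepsilon)=\Delta_\alpha^*((2\varepsilon)^{-1})\geq Q_*$. Under the first assumption of \eqref{thr00}, namely $\varepsilon<(2C_\alpha)^{-1}E_\alpha^{-(\tau+1)}$, we obtain $(2C_\alpha\varepsilon)^{-1/(\tau+1)}>E_\alpha\geq\max(1,d|\alpha|^{-1},Q_\alpha)$. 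This validates the lower-bound argument (since $Q_*\geq Q_\alpha$) and yields $Q(\varepsilon)>\max(1,d|\alpha|^{-1})$, which is the first condition in \eqref{thr0}. The second assumption of \eqref{thr00} rearranges directly to $(2C_\alpha\varepsilon)^{-1/(\tau+1)}\geq\kappa_{d,\alpha}s^{-1}$, and combined with the lower bound gives $Q(\varepsilon)\geq\kappa_{d,\alpha}s^{-1}$, which is the second condition of \eqref{thr0}.

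Finally I would apply Theorem~\ref{thmfini} to obtain the embedding $\Phi\colon\T^n_{s/2}\to\T^n_s$ with $\Phi^*X=X_\alpha+N+R$ and $[X_\alpha,N]=0$, and substitute the lower bound on $Q(\varepsilon)$ into the quantitative estimates. For the distortion of $\Phi$ this gives
\[ |\Phi-\mathrm{Id}|_{s/2}<d^2|\alpha|^{-1}Q(\varepsilon)^{-1}\leq d^2|\alpha|^{-1}(2C_\alpha\varepsilon)^{1/(\tau+1)}, \]
and for the remainder, using that $x\mapsto \exp(-(\ln 2)\kappa_{d,\alpha}^{-1}sx)$ is decreasing,
\[ |R|_{s/2}<2\varepsilon\exp\bigl(-(\ln 2)\kappa_{d,\alpha}^{-1}sQ(\varepsilon)\bigr)\leq 2\varepsilon\exp\bigl(-(\ln 2)\kappa_{d,\alpha}^{-1}s(2C_\alpha\varepsilon)^{-1/(\tau+1)}\bigr), \]
while the bound $|N|_{s/2}<2\varepsilon$ is unchanged. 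There is no genuine obstacle: the only point requiring attention is the inequality direction when inverting $\Delta_\alpha$, which is handled cleanly because Corollary~\ref{cordio} translates the Diophantine condition (stated in terms of $\Psi_1'$) into a bound on $\Psi_d$ via the duality of Theorem~\ref{thdio}, and thus into a polynomial growth bound on $\Delta_\alpha$ that is exactly what is needed to lower-bound its generalized inverse.
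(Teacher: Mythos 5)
Your proposal is correct and follows exactly the route the paper intends: it invokes Corollary~\ref{cordio} to get $\Psi_\alpha(Q)\leq C_\alpha Q^\tau$ for $Q\geq Q_\alpha$, deduces $Q(\varepsilon)=\Delta_\alpha^*((2\varepsilon)^{-1})\geq(2C_\alpha\varepsilon)^{-1/(\tau+1)}$, checks the two conditions of \eqref{thr0} from \eqref{thr00}, and substitutes into the estimates of Theorem~\ref{thmfini} (the paper treats this as immediate and leaves these verifications implicit). No gaps.
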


For our second result, we have to impose two restrictions on our vector $\alpha$. First we assume that it satisfies the following integral condition:
\begin{equation}\label{condA}
\int_{\Delta_\alpha(1)}^{+\infty} \frac{dx}{x\Delta_{\alpha}^*(x)}< \infty, \tag{A}
\end{equation}
where $\Delta_{\alpha}^*$ is the function defined in~(\ref{deltastar}). Using Corollary~\ref{corbr}, we will show in appendix~\ref{app1}, Lemma~\ref{lemmeapp}, that $\alpha$ satisfies this condition~(\ref{condA}) if and only if $\alpha \in \mathcal{B}_d$, that is condition~(\ref{condA}) is equivalent to the Bruno-Rüssmann condition. Furthermore, we have to restrict to the case $d=n$: under such an assumption, any real-analytic vector field $N$ such that $[N,X_\alpha]=0$ is constant, that is $N=X_\beta$ for some $\beta \in \C^n$.

\begin{theorem}\label{thminfini}
Let $X=X_\alpha+P$ be as in~(\ref{H}), with $\alpha$ satisfying condition~(\ref{condA}) and $d=n$, define 
\[ Q(\varepsilon)=\Delta_{\alpha}^*\left((3\varepsilon)^{-1}\right), \quad r(\varepsilon)=Q(\varepsilon)^{-1}+(\ln 2)^{-1}\int_{\Delta_{\alpha}(Q(\varepsilon))}^{+\infty}\frac{dx}{x\Delta_{\alpha}^*(x)}, \quad \kappa_{d,\alpha}=2^6d^2(2^d-1)|\alpha|^{-1} \] 
and assume that
\begin{equation}\label{thr000}
Q(\varepsilon)>\max(1,d|\alpha|^{-1}), \quad r(\varepsilon) \leq \kappa_{d,\alpha}^{-1} s.
\end{equation}
Then there exist a unique constant $\beta\in\C^n$ and a real-analytic embedding $\Phi : \T^n_{s/2} \rightarrow \T^n_{s}$ such that
\[ \Phi^*(X+X_\beta)=X_\alpha \]
with the estimates
\[ |\Phi-\mathrm{Id}|_{s/2}\leq 3^{-1}d^{2}|\alpha|^{-1}r(\varepsilon), \quad |\beta|\leq 2\varepsilon.\]   
\end{theorem}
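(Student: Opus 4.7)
The approach is a KAM-style infinite iteration built on the quasi-periodic averaging lemma (Lemma \ref{multi}). Because $d=n$, any vector field commuting with $X_\alpha$ is constant, so every averaged ``normal-form'' piece produced by Lemma \ref{multi} is of the form $X_{\gamma_j}$ for some $\gamma_j\in \C^n$; these accumulate into the unique constant $\beta$, while the successive conjugations compose into the embedding $\Phi$. The Bruno-R\"ossmann condition (A) enters as exactly the right summability condition to control the total loss of analyticity radius.

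First I would set up the iteration. Set $\varepsilon_0=\varepsilon$, $\varepsilon_{j+1}=\varepsilon_j/2$, and let $x_j=(3\varepsilon_j)^{-1}$ so that $x_{j+1}/x_j=2$; put $Q_j=\Delta_\alpha^*(x_j)$, $s_0=s$, and $s_{j+1}=s_j-\rho_j$ where $\rho_j$ is the radius loss coming from one application of Lemma \ref{multi} (of order $Q_j^{-1}$ up to universal constants). Inductively, assume the vector field at stage $j$ has the form $X_\alpha+X_{\beta_j}+P_j$ on $\T^n_{s_j}$ with $|P_j|_{s_j}\leq\varepsilon_j$ and $\beta_j=\sum_{i<j}\gamma_i$. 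Applying Lemma \ref{multi} with parameter $Q_j$ produces an embedding $\Phi_j:\T^n_{s_{j+1}}\to\T^n_{s_j}$ and a constant $\gamma_j\in\C^n$ satisfying
$$\Phi_j^*(X_\alpha+X_{\beta_j}+P_j)=X_\alpha+X_{\beta_j+\gamma_j}+P_{j+1}$$
with $|\gamma_j|\leq\varepsilon_j$ and $|P_{j+1}|_{s_{j+1}}\leq\varepsilon_{j+1}$. The smallness condition needed to reapply Lemma \ref{multi} is essentially $\varepsilon_{j+1}\Delta_\alpha(Q_{j+1})\leq 1/3$, which holds by the very choice $Q_{j+1}=\Delta_\alpha^*(x_{j+1})$.

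Second, I would verify convergence. Geometric summability $|\gamma_j|\leq 2^{-j}\varepsilon$ yields $\beta=\sum_j \gamma_j$ with $|\beta|\leq 2\varepsilon$. For the radius loss, since $\Delta_\alpha^*$ is non-decreasing, writing $x_0=\Delta_\alpha(Q(\varepsilon))$ one has
$$\sum_{j\geq 1}\frac{1}{\Delta_\alpha^*(x_j)}\leq\frac{1}{\ln 2}\sum_{j\geq 0}\int_{x_j}^{x_{j+1}}\frac{dx}{x\,\Delta_\alpha^*(x)}=\frac{1}{\ln 2}\int_{x_0}^{+\infty}\frac{dx}{x\,\Delta_\alpha^*(x)},$$
which together with the initial $Q_0^{-1}$ contribution reproduces $r(\varepsilon)$. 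Hypothesis~\eqref{thr000} then ensures $\sum_j\rho_j\leq s/2$, so the composed embedding $\Phi=\lim_{J\to\infty}\Phi_0\circ\cdots\circ\Phi_J$ is well-defined on $\T^n_{s/2}$ with $|\Phi-\mathrm{Id}|_{s/2}$ of the claimed order. Passing to the limit ($P_j\to 0$) yields $\Phi^*(X+X_\beta)=X_\alpha$.

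Third, for uniqueness of $\beta$: writing $\Phi=\mathrm{Id}+u$, the conjugation equation unravels to $\partial_\alpha u=P(\,\cdot\,+u)+\beta$, whose spatial mean gives $\beta=-\langle P(\,\cdot\,+u)\rangle$, so $\beta$ is determined by $u$; the usual contraction argument on this functional equation within a small ball around $\mathrm{Id}$ then yields uniqueness of both $u$ and $\beta$. The principal difficulty is the step-by-step bookkeeping: the smallness hypothesis of Lemma \ref{multi} and the radius loss at each iteration must be balanced so that the geometric choice $\varepsilon_{j+1}=\varepsilon_j/2$ produces a Riemann-sum comparison closing up exactly with the factor $(\ln 2)^{-1}$ appearing in the definition of $r(\varepsilon)$. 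This is the point where the proof departs meaningfully from that of Theorem~\ref{thmfini}: one trades the finite, direct iteration for an infinite induction whose summability is guaranteed precisely by the Bruno integral.
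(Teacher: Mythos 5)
Your plan is the \emph{direct} KAM iteration: conjugate $X_\alpha+P$ step by step, pushing the mean of $P_j$ into an accumulating normal-form piece $X_{\beta_j}$, aiming at $\Phi^*X=X_\alpha+X_\beta$. This is precisely the scheme the paper discusses at the opening of \S\ref{ss34} and explains \emph{cannot} be carried out. Concretely, to apply Lemma~\ref{multi} at step $j$ to $Z=X_\alpha+S+P_j$ with $S=X_{\beta_j}$ you must satisfy the first part of~\eqref{thr2}, namely $Q_j\Psi_\alpha(Q_j)(\varepsilon_j+\delta_j)\leq 1$ with $\delta_j\geq|\beta_j|$. But $|\beta_j|=|\sum_{i<j}\gamma_i|$ accumulates to a quantity of order $\varepsilon$ (it does not tend to $0$), while $Q_j\Psi_\alpha(Q_j)=\Delta_\alpha(Q_j)\approx(3\varepsilon_j)^{-1}\to\infty$; hence $Q_j\Psi_\alpha(Q_j)\,\delta_j\to\infty$ and the hypothesis fails for large $j$. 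Your sentence ``the smallness condition needed to reapply Lemma~\ref{multi} is essentially $\varepsilon_{j+1}\Delta_\alpha(Q_{j+1})\leq 1/3$'' silently drops the $\delta$-term from~\eqref{thr2}, and that omission is exactly where the argument breaks. A secondary point: even if your iteration closed, its limit would read $\Phi^*X=X_\alpha+X_\beta$, which is not the statement of Theorem~\ref{thminfini}; the theorem asserts $\Phi^*(X+X_\beta)=X_\alpha$, and these differ because $\Phi^*X_\beta\neq X_\beta$ for a generic embedding $\Phi$.

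The missing idea is Moser's modifying-term device, implemented in the paper as Lemma~\ref{multi2}: one iterates on the parametrized family $X_y+P$ for $y$ in a shrinking ball around $\alpha$, constructing at each step an embedding $\varphi_m:B_{\varepsilon_m}(\alpha)\to B_{\varepsilon_{m-1}}(\alpha)$ together with $\Phi_m$ such that $\Phi_m^*(X_{\varphi_m(x)}+P_m)=X_x+P_{m+1}$. The crucial gain is that the $S$-term in Lemma~\ref{multi} is now $X_{\varphi_m(x)-\alpha}$, of size $\leq 2\varepsilon_m\to 0$, so the hypothesis~\eqref{thr2} can be satisfied at every step with $\delta_m=2\varepsilon_m$; one never carries the full $|\beta|$ along. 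The vector $\beta$ is recovered implicitly at the end as $\varphi(\alpha)-\alpha$ where $\varphi=\lim\varphi^m$. Your choice of scales $\varepsilon_j=2^{-j}\varepsilon$, $Q_j=\Delta_\alpha^*((3\varepsilon_j)^{-1})$ and the Riemann-sum comparison yielding the factor $(\ln 2)^{-1}\int dx/(x\Delta_\alpha^*(x))$ are correct and match the paper, but they cannot be exploited until the iteration itself is set up in the parametrized form.
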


The above theorem states that by adding a ``modifying term" $X_\beta$ (in the terminology of Moser, \cite{Mos67}) to the perturbed vector field $X=X_\alpha+P$, the modified perturbed vector field $X+X_\beta=X_{\alpha+\beta}+P$ can be analytically conjugated to $X_\alpha$. In view of Theorem~\ref{thmfini} in the case $d=n$, the result says that if we modify the original perturbed vector field by adding a vector field $N$ such that $[N,X_\alpha]$, then the modified perturbed vector field can be analytically conjugate to $X_\alpha$. Hence such a statement can be called an ``inverted" normal form, and it implies a result of ``stability" in infinite times (but not for the original vector field). Theorem~\ref{thminfini} is exactly the classical KAM theorem for constant vector fields on the torus first proved by Arnold (\cite{Arn61}) and Moser (\cite{Mos66}). The proof we will give here is very close to the proof of Theorem~\ref{thmfini}, replacing a finite iteration by an infinite iteration, the latter being possible by the restrictions imposed on $\alpha$. As for the proof of Theorem~\ref{thmfini}, only the proof of the averaging step is new, the induction is then essentially classical. Also, as in \cite{Rus10} or \cite{Pos11}, the speed of convergence in the induction process is linear, and we can artificially prescribed any rate of convergence $0<c<1$ without any difficulties (but for the statement we simply chose $c=1/2$). 

Now let us briefly discuss the restrictions we imposed on $\alpha$. First an arithmetic condition is known to be necessary in the case $n=2$, and the condition we used is known to be optimal by a result of Yoccoz (see \cite{Yoc02} for instance, where the discrete version of the problem, which is the analytic linearization of circle diffeomorphisms, is considered). However, for $n\geq 3$, nothing is known, and the flexibility in the proof might suggest that condition~(\ref{condA}) in Theorem~\ref{thminfini} is perhaps not ``optimal", as trying to get rid of any artificial choice in the proof might lead to a perhaps weaker arithmetical condition (and the speed of convergence in the iteration might be sub-linear). Finally, we had to restrict also to $d=n$, and one may ask if the statement remains true for any $1\leq d\leq n$, in the sense that there exists a real-analytic vector field $N$ which commutes with $X_\alpha$ such that $X+N$ can be analytically conjugated to $X_\alpha$.

\subsection{Periodic averaging}\label{ss31}

In this paragraph, we will prove a result about periodic averaging. We consider a periodic frequency $\omega \in \R^n \setminus \{0\}$ with minimal period $T$, and let $X_\omega=\omega$. Recall that averaging along such a periodic frequency, no ``small divisors" arise, one can solve the associated homological equation by a simple integral formula without expanding in Fourier series and cutting Fourier modes. In terms of small divisors, one can notice that for all $k\in \Z^n$ such that $k\cdot\omega\neq 0$, then $|k\cdot\omega|\geq T^{-1}$, but of course we will not use this point of view.

Given $\varpi \in \R^n \setminus \{0\}$, we let $X_\varpi=\varpi$ and given two parameters $\mu>0$ and $\delta>0$, we consider a real-analytic vector field on $\T^n_s$ of the form
\begin{equation}\label{Y}
Y=X_\omega+X_\varpi+S+P, \quad |\varpi|\leq \mu, \quad |S|_{s}\leq \delta, \quad |P|_{s}\leq \varepsilon.
\end{equation}
Here $X_\omega$ is considered as unperturbed, $P$ is our original perturbation and the vector fields $X_\varpi$ and $S$ are ``parameters" which are free for the moment, but will be determined subsequently. The vector field $X_\varpi$ represents a shift of frequency, and will be chosen in \S\ref{ss32}: more precisely, we will approximate our original vector $\alpha$ by a periodic vector $\omega$, and then choose $\varpi=\alpha-\omega$ so that $X_\alpha=X_\omega+X_\varpi$. The vector field $S$ can be ignored at this stage, it will become important only for the inductions later on in \S\ref{ss33} and \S\ref{ss34}.  

\begin{lemma}\label{one}
Consider $Y$ as in~(\ref{Y}), and for $0<\varsigma<s$, $0<b<1$, assume that 
\begin{equation}\label{thr}
\varepsilon+\delta\leq \mu, \quad 2\varepsilon \leq \mu, \quad 2^4T\mu\varsigma^{-1}\leq b.
\end{equation}
Then, setting 
\[ [P]=\int_{0}^{1}P\circ X_{T\omega}^t dt, \quad V=T\int_{0}^{1}(P-[P])\circ X_{T\omega}^t tdt \] 
the map $V^1 : \T^n_{s-\varsigma} \rightarrow \T^n_{s}$ is well-defined real-analytic embedding, and
\[  (V^1)^*Y=X_\omega+X_\varpi+S+[P]+\tilde{P} \]
with the estimates 
\[ |V^1-\mathrm{Id}|_{s-\varsigma}\leq T\varepsilon, \quad |[P]|_{s}\leq\varepsilon , \quad |\tilde{P}|_{s-\varsigma}\leq b\varepsilon. \]
\end{lemma}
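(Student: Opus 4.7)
The key observation is that since $T\omega\in\Z^n$, the translation $X_{T\omega}^t(\theta)=\theta+tT\omega$ is real-valued and therefore preserves the complex strip $\T^n_s$ for all $t$, and $X_{T\omega}^1=\mathrm{Id}$. This closed-orbit structure is precisely what avoids small divisors: the generator $V$ is designed so that the homological equation $L_\omega V = P-[P]$ holds identically, and then the pullback by the time-one map $V^1$ cancels the non-averaged part of $P$ to first order, leaving only quadratic errors.

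My first step would be to verify the homological equation by a direct computation: writing $V(\theta+s\omega)=T\int_0^1 t\,(P-[P])(\theta+(t+s/T)T\omega)\,dt$ and substituting $u=t+s/T$, differentiation in $s$ at $s=0$ produces a boundary contribution $(P-[P])(\theta+T\omega)=(P-[P])(\theta)$ (thanks to $T\omega\in\Z^n$) and an integral contribution $-[P-[P]](\theta)=0$, giving $L_\omega V = P-[P]$. The bound $|[P]|_s\leq|P|_s\leq\varepsilon$ is immediate, and $|V|_s\leq T\int_0^1 t\cdot 2\varepsilon\,dt=T\varepsilon$.

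Next I would check well-definedness of $V^1$. From $T\varepsilon\leq T\mu/2\leq b\varsigma/32<\varsigma/2$ (combining the three hypotheses), the flow $V^t$ sends $\T^n_{s-\varsigma}$ into $\T^n_{s-\varsigma/2}\subset\T^n_s$ for $t\in[0,1]$, and $|V^1-\mathrm{Id}|_{s-\varsigma}\leq|V|_{s-\varsigma/2}\leq T\varepsilon$. Using the standard Taylor expansion
\[
(V^1)^* Y = Y + [V,Y] + \int_0^1 (1-t)\,(V^t)^*[V,[V,Y]]\,dt
\]
together with $[V,X_\omega]=-L_\omega V=[P]-P$, the first-order terms collapse to
\[
(V^1)^* Y = X_\omega + X_\varpi + S + [P] + \tilde P,
\]
with
\[
\tilde P = [V,X_\varpi] + [V,S] + [V,P] + \int_0^1 (1-t)\,(V^t)^*[V,[V,Y]]\,dt.
\]

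The main obstacle, and essentially the only real work, is bounding $|\tilde P|_{s-\varsigma}\leq b\varepsilon$; the hypothesis $2^4T\mu\varsigma^{-1}\leq b$ is calibrated for exactly this. Cauchy estimates on the intermediate strips $\T^n_{s-\varsigma/2}$ and $\T^n_{s-\varsigma}$ yield $|DV|_{s-\varsigma}\lesssim|V|_s/\varsigma\leq T\varepsilon/\varsigma$ and more generally $|[V,W]|_{s-\varsigma}\lesssim |V|_s|W|_s/\varsigma$ (up to a fixed dimensional constant absorbed into the factor $2^4$). The three explicit brackets contribute terms of order $T\varepsilon\mu/\varsigma$, $T\varepsilon\delta/\varsigma$ and $T\varepsilon^2/\varsigma$, each bounded by $\varepsilon\cdot T\mu/\varsigma\leq b\varepsilon/16$ using $\delta\leq\mu-\varepsilon\leq\mu$ and $2\varepsilon\leq\mu$; the double-bracket integral is of order $(T\mu/\varsigma)^2\varepsilon\leq(b/16)^2\varepsilon$ because $V^t$ is a near-identity embedding whose pullback changes norms by a bounded factor on the nested strips. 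Summing gives $|\tilde P|_{s-\varsigma}\leq b\varepsilon$. The bookkeeping of dimensional constants across the nested Cauchy estimates is the only delicate point, but the margin in the factor $2^4$ makes it routine.
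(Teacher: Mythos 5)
Your setup matches the paper's: the same generator $V$, the homological equation $[V,X_\omega]=P-[P]$ verified via $T\omega\in\Z^n$ and the vanishing average, the bounds $|[P]|_s\le\varepsilon$, $|V|_s\le T\varepsilon$, and the well-definedness of $V^1$ from $T\varepsilon<\varsigma$. The gap is in the one estimate you yourself call the only real work. Because you expand all of $Y$ to second order, your remainder contains $\int_0^1(1-t)(V^t)^*[V,[V,Y]]\,dt$ with $Y=X_\omega+X_\varpi+S+P$, and you assert this integral is of order $(T\mu/\varsigma)^2\varepsilon$. That is neither justified nor correct as stated. The double bracket contains $[V,[V,X_\omega]]$, which cannot be estimated by a Cauchy bound: no hypothesis of the lemma controls $|\omega|$ relative to $\mu,\varepsilon,\delta,\varsigma$, so you must re-use the exact identity $[V,X_\omega]=[P]-P$ \emph{inside} the double bracket, a step you never mention. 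Even after doing so, the resulting piece $[V,[P]-P]$ has size about $T\varepsilon^2/\varsigma\le\tfrac12(T\mu/\varsigma)\varepsilon$, i.e.\ first order in $T\mu/\varsigma$, the same order as your three explicit brackets, not second order; so the bookkeeping ``$(b/16)^2\varepsilon$ for the double bracket'' is wrong, and the claim that the margin in $2^4$ makes the constants routine is optimistic: in the paper's own bound the factor $16$ is exactly saturated ($|\tilde P|_{s-\varsigma}\le 16T\mu\varepsilon\varsigma^{-1}\le b\varepsilon$), so there is no slack for casually absorbed constants. With the identity substituted inside the double bracket and a careful split of the loss $\varsigma$ between the two bracket estimates (Lemma~\ref{tech2}) and the pullback (Lemma~\ref{tech3}, which costs a factor $2$ and requires $|V|_s\le(4e)^{-1}$ times the allotted loss), your route does close --- roughly $\tfrac{b}{4}\varepsilon$ from the single brackets plus at most $\tfrac{b}{2}\varepsilon$ from the double-bracket integral --- but that argument has to be supplied; as written, the step fails.

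For comparison, the paper sidesteps the double bracket altogether: it applies Taylor at order two only to $(V^1)^*X_\omega$ and at order one to $(V^1)^*(X_\varpi+S+P)$, then substitutes $[X_\omega,V]=[P]-P$ into the second-order remainder, which merges the two integrals into the single expression $\tilde P=\int_0^1(V^t)^*[P_t+X_\varpi+S,V]\,dt$ with $P_t=tP+(1-t)[P]$ and $|P_t|_s\le\varepsilon$. This remainder involves only one bracket under one pullback and is bounded directly by $2\cdot 4T\varepsilon\varsigma^{-1}(\varepsilon+\mu+\delta)\le 16T\mu\varepsilon\varsigma^{-1}\le b\varepsilon$. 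If you want to keep your full second-order expansion, state and use the substitution $[V,X_\omega]=[P]-P$ a second time inside $[V,[V,Y]]$ and redo the constant bookkeeping explicitly; otherwise the estimate of $\tilde P$, the only nontrivial part of the lemma, is missing.
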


Note that our condition~(\ref{thr}) implies in particular that $T\varepsilon$ is sufficiently small with respect to $\varsigma$, and at this stage this is essentially the only assumption we need (let us also note that the second part of the condition~(\ref{thr}) is here just for convenience, as it will be implied by the first part). We artificially introduced other assumptions in order to prescribe already the size of the remainder $\tilde{P}$ to $b\varepsilon$, for an arbitrary $0<b < 1$. Note also that our vector field $V$, and therefore the transformation $V^1$, is independent of the choice of $X_\varpi$ and $S$. 

In the special case where $X_\varpi=S=0$, this is just an averaging along the periodic flow generated by $X_\omega$. From an analytical point of view, $X_\omega$ induces a semi-simple linear differential operator $\mathcal{L}_{X_\omega}=[\,\cdot\,,X_\omega]$ acting on the space of vector fields: $[P]$ is just the projection of $P$ onto the kernel of $\mathcal{L}_{X_\omega}$, hence $[[P],X_\omega]=0$, then $P-[P]$ lies in the range of $\mathcal{L}_{X_\omega}$ and $V$ is its unique pre-image, that is $[V,X_\omega]=P-[P]$. From a geometrical point of view, let $\mathcal{F}^1$ be the one-dimensional foliation tangent to $X_\omega$: since $\omega$ is periodic, the leaves $\mathcal{F}^1$ are compact and diffeomorphic to $\T^1$. Then $[P]$ is just the mean value of $P$ on each leaf of $\mathcal{F}^1$, the vector field $P-[P]$ has therefore zero mean so we can integrate it along each leaf of $\mathcal{F}^1$ to obtain a periodic vector field $V$. The statement that there is no ``small divisors" can be expressed by saying that the inverse operator of $\mathcal{L}_{X_\omega}$ (defined on the image of $\mathcal{L}_{X_\omega}$) is bounded by $T$, which simply comes from the fact that the leaves of $\mathcal{F}^1$ are compact. The proof of Lemma~\ref{one} is essentially classical, but for completeness we give all the details, using technical estimates which are contained in Appendix~\ref{app2}.

\begin{proof}[Proof of Lemma~\ref{one}]
Since $|P|_s\leq \varepsilon$, then obviously $|[P]|_s\leq \varepsilon$ and $|V|_s\leq T\varepsilon$. From the second and third part of~\eqref{thr}, we have in particular $T\varepsilon < \varsigma$, hence by Lemma~\ref{tech1}, the map $V^1 : \T^n_{s-\varsigma} \rightarrow \T^n_{s}$ is a well-defined real-analytic embedding and  
\[ |V^1-\mathrm{Id}|_{s-\varsigma}\leq|V|_s\leq T\varepsilon.\]
Now we can write
\begin{equation}\label{tay}
(V^1)^*Y=(V^1)^*X_\omega+(V^1)^*(X_\varpi+S+P) 
\end{equation}
and using the general equality
\[ \frac{d}{dt}(V^t)^*F=(V^t)^*[F,V] \]
for an arbitrary vector field $F$, we can apply Taylor's formula with integral remainder to the right-hand side of~\eqref{tay}, at order two for the first term and at order one for the second term, and we get
\[ (V^1)^*Y=X_\omega+[X_\omega,V]+\int_{0}^{1}(1-t)(V^t)^*[[X_\omega,V],V]dt+X_\varpi+S+P+\int_{0}^{1}(V^t)^*[X_\varpi+S+P,V]dt. \]
Now let us check that the equality $[V,X_\omega]=P-[P]$ holds true:  let us denote $G=P-[P]$ and $DV$ the differential of $V$, then since $X_\omega$ is a constant vector field, we have
\[ [V,X_\omega]=DV.\omega=T\int_{0}^{1}D(G\circ X_{T\omega}^{t}).\omega tdt=\int_{0}^{1}D(G\circ X_{T\omega}^{t}).T\omega tdt \]
so using the chain rule
\[ [V,X_\omega]=\int_{0}^{1}\frac{d}{dt}(G\circ X_{T\omega}^{t}) tdt \]
and an integration by parts
\[ [V,X_\omega]=\left.(G\circ X_{T\omega}^{t})t\right\vert_{0}^{1}-\int_{0}^{1}G\circ X_{T\omega}^{t}dt=G,  \]
where in the last equality, $G\circ X_{T\omega}^{1}=G$ since $T\omega\in\Z^n$ and the integral vanishes since $[G]=0$. So using the equality $[V,X_\omega]=P-[P]$, that can be written as $[X_\omega,V]+P=[P]$, we have 
\[ (V^1)^*Y=X_\omega+X_\varpi+S+[P]+\int_{0}^{1}(1-t)(V^t)^*[[X_\omega,V],V]dt+\int_{0}^{1}(V^t)^*[X_\varpi+S+P,V]dt,\]
and if we set
\[ P_t=tP+(1-t)[P], \quad \tilde{P}=\int_{0}^{1}(V^t)^*[P_t+X_\varpi+S,V]dt \]
and use again the equality $[X_\omega,V]=[P]-P$ we eventually obtain
\[ (V^1)^*Y=X_\omega+X_\varpi+S+[P]+\tilde{P}.\]
It remains to estimate $\tilde{P}$, and for that let us write $U=[X_\varpi+S+P,V]$. Using the second and third part of~\eqref{thr}, we have $T\varepsilon \leq 2^{-1}T\mu \leq 2^{-5}\varsigma \leq (8e)^{-1}\varsigma=(4e)^{-1}\varsigma/2$ and therefore by Lemma~\ref{tech3}, we can estimate
\begin{equation}\label{est1}
|\tilde{P}|_{s-\varsigma}\leq 2|U|_{s-\varsigma/2}. 
\end{equation}
Now the term $U$ is just a sum of three Lie brackets, and as $|P_t|_s\leq \varepsilon$, each of them can be estimated by Lemma~\ref{tech2} and we obtain
\begin{equation}\label{est2}
|U|_{s-\varsigma/2}\leq 4T\varepsilon\varsigma^{-1}(\varepsilon+\mu+\delta)\leq 8T\varsigma^{-1}\mu\varepsilon 
\end{equation}
where the last inequality uses the first part of~\eqref{thr}. Now from~\eqref{est1}, \eqref{est2} and the last part of \eqref{thr}, we finally obtain
\[ |\tilde{P}|_{s-\varsigma} \leq 16T\varsigma^{-1}\mu\varepsilon \leq b\varepsilon \]
which is the estimate we wanted. 
\end{proof}

\subsection{Quasi-periodic averaging}\label{ss32}

Now we consider our original unperturbed vector field $X_\alpha$ and a real-analytic vector field on $\T^n_s$ of the form
\begin{equation}\label{Z}
Z=X_\alpha+S+P, \quad |S|_{s}\leq \delta, \quad |P|_{s}\leq \varepsilon,
\end{equation} 
which, when $S=0$, corresponds to our vector field $X$ as in~(\ref{H}). Recall that to $\alpha\in\R^n\setminus\{0\}$ is associated a $d$-dimensional vector subspace $F_{\alpha} \subseteq \R^n$, which is the smallest rational subspace containing $\alpha$, and a $\Z$-module $\Lambda_{\alpha}=\Z^n\cap F_{\alpha}$ of rank $d$. 

For a parameter $Q>\max(1,d|\alpha|^{-1})$ to be chosen later, we will use Proposition~\ref{corutile} to find $d$ periodic vectors $\omega_j \in \R^n\setminus\{0\}$, with periods $T_j$ essentially bounded by $\Psi_\alpha(Q)$, such that each $\omega_j$ is essentially a $Q$-approximation of $\alpha$ and the integer vectors $T_j\omega_j$ form a $\Z$-basis of $\Lambda_{\alpha}$. Then applying inductively $d$ times Lemma~\ref{one}, we will obtain the following result. 

\begin{lemma}\label{multi}
Consider $Z$ as in~(\ref{Z}), and for $Q>\max(1,d|\alpha|^{-1})$, $0<\sigma<s$, and $0<c< 1$, assume that 
\begin{equation}\label{thr2}
Q\Psi_{\alpha}(Q)(\varepsilon+\delta)\leq 1, \quad 2Q\Psi_{\alpha}(Q)\varepsilon\leq 1, \quad 2^4d^2(2^d-1)|\alpha|^{-1}Q^{-1}\sigma^{-1}\leq c.
\end{equation}
Then there exists a real analytic embedding $\Phi : \T^n_{s-\sigma} \rightarrow \T^n_{s}$ such that
\[ \Phi^* Z=X_\alpha+S+\overline{P}+P^+, \quad  \overline{P}=\int_{\vartheta \in F_{\alpha}/\Lambda_{\alpha}}P\circ X_{\vartheta}^{1} d\vartheta \]
with the estimates 
\[ |\Phi-\mathrm{Id}|_{s-\sigma}\leq d^2|\alpha|^{-1}\Psi_{\alpha}(Q)\varepsilon, \quad |\overline{P}|_{s}\leq \varepsilon , \quad |P^+|_{s-\sigma}\leq c\varepsilon. \] 
\end{lemma}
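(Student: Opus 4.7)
The plan is to invoke Proposition~\ref{corutile} to produce $d$ periodic approximations $\omega_1,\ldots,\omega_d$ of $\alpha$ (with periods $T_1,\ldots,T_d$), and then to apply Lemma~\ref{one} successively, once per direction $\omega_j$. The embedding $\Phi$ will be defined as the composition $V_1^1\circ V_2^1\circ\cdots\circ V_d^1$, where $V_j$ is the vector field produced at step $j$ of Lemma~\ref{one}.

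At step $j$, the current vector field has the form $X_\alpha+S+C_{j-1}$ with $C_0=P$. I would split $X_\alpha=X_{\omega_j}+X_{\alpha-\omega_j}$ and apply Lemma~\ref{one} with $\omega=\omega_j$, $\varpi=\alpha-\omega_j$, keeping $S$ fixed throughout (so that the parameter $\delta$ of Lemma~\ref{one} never grows), and feeding the entire current perturbation $C_{j-1}$ (not just its non-averaged part) into the role of $P$. This produces $C_j=[C_{j-1}]_j+\tilde P_j$, where $[\,\cdot\,]_j$ denotes the average along the period-$T_j$ flow of $\omega_j$. The crucial structural observation is that since $T_1\omega_1,\ldots,T_d\omega_d$ form a $\Z$-basis of $\Lambda_\alpha$ (another assertion of Proposition~\ref{corutile}), the iterated $d$-fold averaging operator $[\cdots[\,\cdot\,]_1\cdots]_d$ computes exactly the integral defining $\overline P$; expanding the telescope then yields $C_d=\overline P+P^+$ with $P^+=\sum_{i=1}^{d}[\cdots[\tilde P_i]_{i+1}\cdots]_d$, the $i=d$ term reducing to $\tilde P_d$.

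To pick parameters I would set $b_j=b:=c/(2^d-1)$ and $\varsigma_j=\sigma/d$ at each step, so that the total analyticity loss is exactly $\sigma$ and $b\le 1$. The geometric recursion $|C_j|_{s-j\sigma/d}\le(1+b)|C_{j-1}|_{s-(j-1)\sigma/d}$, combined with the elementary inequality $(1+c/(2^d-1))^d\le 1+c\le 2$, gives $\varepsilon_j:=|C_{j-1}|\le 2\varepsilon$ uniformly in $j$. To verify the hypotheses of Lemma~\ref{one} at step $j$, I must place $\mu_j$ in a window bounded below by $\max(|\alpha-\omega_j|,\varepsilon_j+\delta,2\varepsilon_j)$ and above by $b\,\varsigma_j/(16\,T_j)$. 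Proposition~\ref{corutile} supplies $|\alpha-\omega_j|\le d(|\alpha|T_jQ)^{-1}$ and $T_j\le d|\alpha|^{-1}\Psi_\alpha(Q)$, and the three assumptions in~\eqref{thr2} are calibrated exactly so that this window is non-empty for every $j$.

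The main obstacles are then routine bookkeeping. The remainder size comes from the telescoping identity $\sum_{j=1}^{d}b(1+b)^{j-1}=(1+b)^d-1\le c$, which yields $|P^+|_{s-\sigma}\le c\varepsilon$; the $L^\infty$-norm contractivity of each averaging operator is essential here. The displacement estimate follows by summing $|V_j^1-\mathrm{Id}|\le T_j\varepsilon_j$ over $j$ and using $T_j\le d|\alpha|^{-1}\Psi_\alpha(Q)$; one must check inductively that each partial composition $V_1^1\circ\cdots\circ V_j^1$ continues to define a real-analytic embedding on $\T^n_{s-j\sigma/d}$, but this is standard and relies on the same composition estimates already invoked in the proof of Lemma~\ref{one}. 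The genuinely new ingredient is thus not analytic but structural: $d$ independent periodic averagings along a $\Z$-basis of $\Lambda_\alpha$ reconstruct the full quasi-periodic average of $P$, thereby reducing the general quasi-periodic problem to the periodic case covered by Lemma~\ref{one}.
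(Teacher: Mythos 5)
Your overall architecture (Proposition~\ref{corutile} to get a $\Z$-basis of $\Lambda_\alpha$ made of periodic approximations, $d$ successive applications of Lemma~\ref{one} with $\varpi_j=\alpha-\omega_j$, $\varsigma=\sigma/d$, $b=c/(2^d-1)$, and the observation that the $d$-fold iterated periodic average equals $\overline P$) is exactly the paper's. But there is a genuine gap in the way you handle the remainders: you feed the \emph{entire} current perturbation $C_{j-1}=[C_{j-2}]_{j-1}+\tilde P_{j-1}$ back into Lemma~\ref{one}. Then at step $j\geq 2$ the hypotheses of Lemma~\ref{one} must be checked with $\varepsilon_j=|C_{j-1}|$, which your own recursion only bounds by $(1+b)^{j-1}\varepsilon$, i.e.\ possibly strictly larger than $\varepsilon$ (up to almost $2\varepsilon$ when $c$ is close to $1$). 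The assumptions~\eqref{thr2} give only $\varepsilon+\delta\leq (Q\Psi_\alpha(Q))^{-1}\leq\mu_j$ and $2\varepsilon\leq\mu_j$, and equality $2\varepsilon=\mu_j$ is perfectly possible (it occurs when $T_j$ attains its upper bound $d|\alpha|^{-1}\Psi_\alpha(Q)$); in that case $2\varepsilon_j\leq\mu_j$ and $\varepsilon_j+\delta\leq\mu_j$ fail at the second step already, so your claim that the window for $\mu_j$ is ``calibrated exactly'' by~\eqref{thr2} is not correct for your scheme. The same inflation spoils the displacement bound: summing $|V_j^1-\mathrm{Id}|\leq T_j\varepsilon_j$ gives $d|\alpha|^{-1}\Psi_\alpha(Q)\,\varepsilon\sum_{j=1}^d(1+b)^{j-1}$, and $\sum_{j=1}^d(1+b)^{j-1}=((1+b)^d-1)/b$ can exceed $d$ (it can be as large as $2^d-1$), so you do not recover the stated estimate $|\Phi-\mathrm{Id}|_{s-\sigma}\leq d^2|\alpha|^{-1}\Psi_\alpha(Q)\varepsilon$.

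The missing idea, which is how the paper makes the constants in~\eqref{thr2} suffice, is to \emph{never} re-inject the remainder into the averaging step. Inductively one writes $(\Phi_j)^*Z=X_\alpha+S+P_j+P_j^+$ with $P_j=[P_{j-1}]_j$, and at step $j+1$ Lemma~\ref{one} is applied only to $Y_j=X_{\omega_{j+1}}+X_{\varpi_{j+1}}+S+P_j$; since averaging along the real translation flow is an $L^\infty$ contraction on every $\T^n_r$, one has $|P_j|_s\leq\varepsilon$ for all $j$, so the \emph{same} conditions $\varepsilon+\delta\leq\mu_j$, $2\varepsilon\leq\mu_j$, $2^4T_j\mu_j\varsigma^{-1}\leq b$ work at every step and follow directly from~\eqref{thr2} with $\mu_j=d(|\alpha|T_jQ)^{-1}$. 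The accumulated remainder is merely transported, $P_{j+1}^+=\tilde P_{j+1}+(V_{j+1}^1)^*P_j^+$, and estimated with Lemma~\ref{tech3}: $|(V_{j+1}^1)^*P_j^+|\leq 2|P_j^+|$, giving $|P_j^+|\leq(2^j-1)b\varepsilon$ and hence $|P^+|_{s-\sigma}\leq(2^d-1)b\varepsilon=c\varepsilon$ --- this doubling per step is precisely the origin of the factor $2^d-1$ in the threshold, which your scheme would instead need in the smallness assumptions where it is not available. With this bookkeeping the displacement estimate also comes out exactly as stated, since each step contributes $T_{j+1}\varepsilon\leq d|\alpha|^{-1}\Psi_\alpha(Q)\varepsilon$. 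So your proposal is repairable, but only by adopting this separation of the averaged part from the remainder (or by strengthening~\eqref{thr2}, which is not allowed by the statement).
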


Let $\tilde{\mathcal{F}}_{\alpha}^d$ be the $d$-dimensional linear foliation of $\R^n$ given by $\R^n=\bigsqcup_{a\in F_{\alpha}^\perp}F_{\alpha}^a$, where $F_{\alpha}^a=a+F_{\alpha}$ is the translate of $F_{\alpha}$ by $a \in F_{\alpha}^\perp$. If $\pi : \R^n \rightarrow \T^n$ is the canonical projection, then $\pi(F_{\alpha})=F_{\alpha}/\Lambda_{\alpha}$ is compact and diffeomorphic to $\T^d$, hence $\mathcal{F}_{\alpha}^d=\pi(\tilde{\mathcal{F}}_{\alpha}^d)$ is a $d$-dimensional foliation of $\T^n$ with compact leaves diffeomorphic to $\T^d$. So in the above statement, $d\vartheta$ is the Haar measure on the compact quotient group $F_{\alpha}/\Lambda_{\alpha}$ and $X_{\vartheta}^{1}$ is the time-one map of the constant vector field $X_{\vartheta}=\vartheta$. Note that by Birkhoff's ergodic theorem, one can also express $\overline{P}$ in the more classical format
\[ \overline{P}=\lim_{s\rightarrow +\infty}\frac{1}{s}\int_{0}^{s}P\circ X_{\alpha}^{t}dt \]
since each leaf of $\mathcal{F}_{\alpha}^d$ is invariant by $X_\alpha$, and the restriction of $X_\alpha$ to each such leaf is uniquely ergodic (hence the above convergence is uniform). The vector field $\overline{P}$ can be equivalently considered as being the projection of $P$ onto the kernel of the operator $\mathcal{L}_{X_\alpha}=[\,\cdot\,,X_\alpha]$ or the mean value of $P$ along the leaves of $\mathcal{F}_{\alpha}^d$. As before, we will notice that the transformation $\Phi$ constructed in the above lemma is independent of the choice of $S$.

In the special case where $S=0$, this is just an averaging along the quasi-periodic flow generated by $X_\alpha$. In the classical approach, one tries to solve the equation $[W,X_\alpha]=P-\overline{P}$, or equivalently to integrate $P-\overline{P}$ along the leaves of the one-dimensional foliation tangent to $X_\alpha$, to find a coordinate transformation $\Phi=W^1$. But then small divisors inevitably arise as the inverse operator of $\mathcal{L}_{X_\alpha}$ is now unbounded, since the leaves of the foliation defined by $X_{\alpha}$ are non-compact. To overcome this difficulty, usually one introduce a parameter $K\geq 1$ and replace $P$ by a polynomial approximation $P_K$ obtained by cutting higher Fourier modes of $P$. The term $P-P_K$ is then considered as an ``error" and thus one solves only an ``approximate" equation. 

Here we shall use a significantly different though ``dual" approach, replacing the quasi-periodic flow $X_\alpha$ by $d$ independent approximating periodic flows $X_{\omega_1}, \dots, X_{\omega_d}$, which at each point will be tangent to the foliation $\mathcal{F}_{\alpha}^d$. More precisely, for a given $Q$ sufficiently large, we will approximate $\alpha$ by $d$ independent $T_j$-periodic vectors $\omega_j$ such that the integer vectors $T_j\omega_j$ form a $\Z$-basis of $\Lambda_\alpha$. Replacing $\alpha$ by $\omega_j$, the terms $\alpha-\omega_j$ will be considered as ``errors" and we will solve $d$ equations $[V_j,X_{\omega_j}]=P_{j-1}-[P_{j-1}]_j$ successively, starting with $P_0=P$ and choosing at each step $P_j=[P_{j-1}]_j$, where $[\,\cdot\,]_j$ denotes the average along the periodic flow of $\omega_j$. At each step, only the inverse operator of $\mathcal{L}_{X_{\omega_j}}=[\,\cdot\,,X_{\omega_j}]$ will be involved, and the latter is bounded by $T_j$. At the end, we will find a coordinate transformation $\Phi$ as the composition of time-one maps $V_1^1\circ \cdots \circ V_d^1$, even though we do not integrate along the flow of $X_\alpha$ (that is we do not find a generating function of $\Phi$). Also, $P_d$ will be equal to the (space) average of $P$ along the leaves of the foliation spanned by $X_{\omega_1}, \dots, X_{\omega_d}$ (which is the foliation $\mathcal{F}_{\alpha}^d$) and therefore $P_d$ is also the (time) average along the flow of $X_\alpha$.  Note that the role of our ``approximating" parameter $Q\geq 1$ is dual to the use of the parameter $K\geq 1$ in the classical approach, and by solving exactly the $d$ equations we mentioned above we will essentially solves the classical approximate equation. 

\begin{proof}[Proof of Lemma~\ref{multi}]
For a given $Q>\max(1,d|\alpha|^{-1})$, we apply Proposition~\ref{corutile}: there exists $d$ periodic vectors $\omega_1, \dots, \omega_d$, of periods $T_1, \dots, T_d$, such that $T_1\omega_1, \dots, T_d\omega_d$ form a $\Z$-basis of $\Lambda_\alpha$ and for $j\in\{1,\dots,d\}$,
\[ |\alpha-\omega_j|\leq d(|\alpha|T_j Q)^{-1}, \quad |\alpha|^{-1} \leq T_j \leq |\alpha|^{-1}d\Psi_d(Q).\]
Now we define $\varpi_j=\alpha-\omega_j$ and $\mu_j=d(|\alpha|T_j Q)^{-1}$ so that $|\varpi_j|\leq \mu_j$, and we set $\varsigma=d^{-1}\sigma$ and $b=(2^d-1)^{-1}c$. Then $0<\sigma<s$ and $0<c< 1$ implies $0<\varsigma<s$ and $0<b< 1$, and for any $j\in\{1,\dots,d\}$, by~(\ref{thr2}), the conditions
\begin{equation}\label{thri}
\varepsilon+\delta\leq \mu_j, \quad 2\varepsilon\leq \mu_j, \quad 2^4T_j\mu_j\varsigma^{-1}\leq b,
\end{equation}
are satisfied. Set $P_0=P$ and define inductively 
\[ P_j=\int_{0}^{1}P_{j-1}\circ X_{T_j\omega_{j}}^tdt=[P_{j-1}]_j, \quad j\in\{1,\dots,d\}.\]
Obviously, $|P_j|_{s}<\varepsilon$ and
\begin{equation*}
P_d = \int_{0}^{1}\dots\int_{0}^{1}P\circ X_{T_1\omega_{1}}^{t_1}\circ \cdots \circ X_{T_d\omega_{d}}^{t_d} dt_1\dots dt_d=\overline{P} 
\end{equation*}
since 
\[ \{t_1T_1\omega_1+\cdots+t_dT_d\omega_d \; | \;  0\leq t_j <1, \; j\in\{1,\dots,d\} \} \subseteq F_\alpha \] 
is a fundamental domain of $F_{\alpha}/\Lambda_{\alpha}$. Now for $j\in\{0,\dots,d\}$, set $s_j=s-j\varsigma$, then $s_0=s$ and $s_d=s-\sigma$. Then we claim that for any $j\in\{0,\dots,d\}$, there exist a real analytic embedding $\Phi_j : \T^n_{s_j} \rightarrow \T^n_{s}$ such that
\[ (\Phi_j^*)Z=X_\alpha+S+P_j+P_j^+ \]
with the estimates 
\[ |\Phi_j-\mathrm{Id}|_{s_j}\leq dj|\alpha|^{-1}\Psi_{\alpha}(Q)\varepsilon, \quad |P_j^+|_{s_j}\leq (2^j-1)b\varepsilon. \] 
For $j=0$, since $P_0=P$, letting $\Phi_0$ be the identity and $P_0^+=0$, there is nothing to prove. So assume the statement is true for some $j\in\{0,\dots,d-1\}$, and let us prove it remains true for $j+1$. To do so, let us write, $X_\alpha=X_{\omega_j}+X_{\varpi_j}$ so that by~\eqref{thri}, Lemma~\ref{one} can be applied to the vector field
\[ Y_j=(\Phi_j^*)Z-P_j^+=X_\alpha+S+P_j=X_{\omega_j}+X_{\varpi_j}+S+P_j \]
and if 
\[ V_{j+1}=T_{j+1}\int_{0}^{1}(P_j-P_{j+1})\circ X_{T_{j+1}\omega_{j+1}}^t tdt,  \] 
then $V_{j+1}^{1} : \T^{n}_{s_{j+1}} \rightarrow \T^{n}_{s_{j}}$ is a well-defined analytic embedding for which
\[ (V_{j+1}^{1})^*Y_j=X_{\omega_j}+X_{\varpi_j}+S+P_{j+1}+\tilde{P}_{j+1}=X_{\alpha}+S+P_{j+1}+\tilde{P}_{j+1}, \]
with the estimate 
\[ |V_{j+1}^{1}-\mathrm{Id}|_{s_{j+1}}\leq T_{j+1} \varepsilon\leq d|\alpha|^{-1}\Psi_{\alpha}(Q)\varepsilon, \quad |\tilde{P}_{j+1}|_{s_{j+1}}\leq b\varepsilon. \] 
So we define $\Phi_{j+1}=\Phi_j\circ V_{j+1}^{1}$ and $P_{j+1}^+=\tilde{P}_{j+1}+(V_{j+1}^{1})^*P_j^+$ hence
\[ (\Phi_{j+1}^*)Z=X_\alpha+S+P_{j+1}+P_{j+1}^+. \]
For the estimates, we write $\Phi_{j+1}-\mathrm{Id}=\Phi_j\circ V_{j+1}^{1}-V_{j+1}^{1}+V_{j+1}^{1}-\mathrm{Id}$ so that
\[ |\Phi_{j+1}-\mathrm{Id}|_{s_{j+1}}\leq |\Phi_j-\mathrm{Id}|_{s_j}+|V_{j+1}^{1}-\mathrm{Id}|_{s_{j+1}}\leq d(j+1)|\alpha|^{-1}\Psi_{\alpha}(Q)\varepsilon  \]
and using Lemma~\ref{tech3}, we have
\[ |(V_{j+1}^{1})^*P_j^+|_{s_{j+1}}\leq 2|P_j^+|_{s_j}\leq 2(2^j-1)b\varepsilon \]
hence
\[ |P_{j+1}^+|_{s_{j+1}}\leq b\varepsilon + 2(2^j-1)b\varepsilon=(1+2(2^j-1))b\varepsilon=(2^{j+1}-1)b\varepsilon. \]
So this proves the claim, and setting $\Phi=\Phi_d$ and $P^+=P_d^+$, as $P_d=\overline{P}$ and $b=(2^d-1)^{-1}c$, this also proves the lemma.
\end{proof}

\subsection{Proof of Theorem~\ref{thmfini}}\label{ss33}

We are finally ready to give the proof of Theorem~\ref{thmfini}, which consists in applying inductively Lemma~\ref{multi} as many times as we can. For simplicity, this lemma will be applied with $c=1/2$. 

\begin{proof}[Proof of Theorem~\ref{thmfini}]
Let $C_{d,\alpha}=2^5d^2(2^d-1)|\alpha|^{-1}$ and $\kappa_{d,\alpha}=2C_{d,\alpha}$. For $m\geq 1$ to be chosen below, and for $i\in\{0,\dots,m\}$, define 
\[ \varepsilon_{i}=2^{-i}\varepsilon, \quad \gamma_i=(1-2^{-i})2\varepsilon=\sum_{k=0}^{i-1}\varepsilon_i, \quad \sigma_i=(2m)^{-1}s. \]
Note that $\varepsilon_i+\gamma_i=\gamma_{i+1}<2\varepsilon$ for $i\in\{0,\dots,m\}$. Let $s_{-1}=s_0=s$ and define inductively $s_{i}=s_{i-1}-\sigma_i$ for $i\in\{1,\dots,m\}$ so that $s_m=s/2$. For $Q>\max(1,d|\alpha|^{-1})$ to be chosen below, if the conditions
\begin{equation}\label{thrii}
Q\Psi_{\alpha}(Q)(\varepsilon_i+\gamma_i)\leq 1, \quad 2Q\Psi_{\alpha}(Q)\varepsilon_i\leq 1, \quad C_{d,\alpha}Q^{-1}\sigma_{i}^{-1}\leq 1
\end{equation}
are satisfied, then we claim that for all $i\in\{0,\dots,m\}$, there exists a real-analytic embedding $\Phi^i : \T^n_{s_{i}} \rightarrow \T^n_{s}$ such that
\[ (\Phi^i)^*X=X_\alpha+N_i+P_i, \quad [N_i,X_\alpha]=0, \]
with the estimates
\[ |\Phi^i-\mathrm{Id}|_{s_{i}}\leq d^2|\alpha|^{-1}\Psi_{\alpha}(Q)\gamma_i, \quad |N_{i}|_{s_{i-1}} \leq \gamma_{i}, \quad |P_{i}|_{s_{i}}\leq \varepsilon_{i}. \]
Indeed, for $i=0$, choosing $\Phi^0$ to be the identity, $N_0=0$ and $P_0=P$, there is nothing to prove. So assume the statement holds true for some $i\in\{0,\dots,m-1\}$, and let us prove it remains true for $i+1$. By the induction hypothesis and~(\ref{thrii}), we can apply Lemma~\ref{multi} with $c=1/2$ to $Z=(\Phi^i)^*X=X_\alpha+N_i+P_i$, so with $S=N_i$ and $\delta=\gamma_i$, and we find a real-analytic embedding $\Phi_{i+1} : \T^n_{s_{i+1}} \rightarrow \T^n_{s_{i}}$ such that
\[ \Phi_{i+1}^{*}Z=X_\alpha+N_i+\overline{P_i}+P_i^+ \]
with the estimates
\[ |\Phi_{i+1}-\mathrm{Id}|_{s_{i+1}}\leq d^2|\alpha|^{-1}\Psi_{\alpha}(Q)\varepsilon_i, \quad |\overline{P_i}|_{s_{i}}\leq \varepsilon_i, \quad |P_i^+|_{s_{i+1}}\leq 2^{-1}\varepsilon_i=\varepsilon_{i+1}. \]
Since $\varepsilon_i+\gamma_i=\gamma_{i+1}$ and $[\overline{P_i},X_\alpha]=0$, setting $\Phi^{i+1}=\Phi^i\circ\Phi_{i+1}$, $N_{i+1}=N_i+\overline{P_i}$ and $P_{i+1}=P_i^+$, the statement obviously holds true for $i+1$ and this proves the claim. Therefore setting $\Phi=\Phi^m : \T^n_{s/2} \rightarrow \T^n_{s}$, $N=N_m$ and $R=P_m$, we obtain
\[ \Phi^*X=X_\alpha+N+R, \quad [X_\alpha,N]=0, \]
with the estimates
\[ |\Phi-\mathrm{Id}|_{s/2}\leq d^2|\alpha|^{-1}\Psi_{\alpha}(Q)\gamma_m<2d^2|\alpha|^{-1}\Psi_{\alpha}(Q)\varepsilon,\]
and
\[ |N|_{s/2}\leq |N|_{s_{m-1}}\leq \gamma_m<2\varepsilon, \quad |R|_{s/2}\leq \varepsilon_m=2^{-m}\varepsilon. \] 
Now it remains to choose our parameters $Q\geq 1$ and $m\geq 1$ in order to fulfil our assumption~(\ref{thrii}). First, $\varepsilon_i+\gamma_i< 2\varepsilon$ and $2\varepsilon_i\leq 2\varepsilon$ for all $i\in\{0,\dots,m\}$, hence the first and second part of~(\ref{thrii}) are satisfied if $2Q\Psi_{\alpha}(Q)\varepsilon\leq 1$, and the latter is satisfied if $\Delta_{\alpha}(Q)=Q\Psi_{\alpha}(Q)\leq (2\varepsilon)^{-1}$, that is if $Q=\Delta_{\alpha}^*\left((2\varepsilon)^{-1}\right)$. As for the third part of~(\ref{thrii}), since $\sigma_i^{-1}=2ms^{-1}$ the latter is satisfied if we choose $m$ to be the largest integer smaller than $(2C_{d,\alpha})^{-1}sQ=\kappa_{d,\alpha}^{-1}sQ$. So eventually $Q$ and $m$ are chosen as follows:
\[ Q=\Delta_{\alpha}^*\left((2\varepsilon)^{-1}\right), \quad m=\left\lfloor(\kappa_{d,\alpha})^{-1} sQ\right\rfloor=\left\lfloor(\kappa_{d,\alpha})^{-1}s\Delta_{\alpha}^*\left((2\varepsilon)^{-1}\right)\right\rfloor.  \]
Our threshold~(\ref{thr0}) in the statement of Theorem~\ref{thmfini} ensures that both $Q>\max(1,d|\alpha|^{-1})$ and $m\geq 1$, and we have obtained
\[ \Phi^*X=X_\alpha+N+R, \quad [X_\alpha,N]=0, \]
with the estimates
\[ |\Phi-\mathrm{Id}|_{s/2}< 2d^2|\alpha|^{-1}\Psi_{\alpha}(Q)\varepsilon \leq d^2|\alpha|^{-1}Q^{-1}\]
and since $m>\kappa_{d,\alpha}^{-1}sQ-1$, 
\[ |N|_{s/2}<2\varepsilon, \quad |R|_{s/2}< 2\varepsilon 2^{-\kappa_{d,\alpha}^{-1}sQ}=2\varepsilon \exp\left(-\ln2(\kappa_{d,\alpha})^{-1}sQ\right). \]
This was the statement to prove.
\end{proof}

\subsection{Proof of Theorem~\ref{thminfini}}\label{ss34}

First let us point out that even if we impose an arithmetical condition on $\alpha$ and we restrict to $d=n$, it seems very difficult to use the scheme of the proof of Theorem~\ref{thmfini} to go from a finite iteration to an infinite iteration, that is to analytically conjugate $X_\alpha+P$ to a normal form $X_\alpha+N$, with $[X_\alpha,N]=0$. Of course, there is no problem at the formal level as one can construct a formal transformation $\Phi^{\infty}$ and a formal vector field $N_{\infty}$ such that formally $(\Phi^{\infty})^*X=X_\alpha+N_{\infty}$. However, because of the presence of the formal vector field $N_{\infty}$, the formal transformation $\Phi^\infty$ cannot converge in general. A technical explanation in our situation goes as follows. To make infinitely many iterations in the previous scheme, instead of fixing a large $Q$ with respect to a small $\varepsilon$, one should consider an increasing sequence of $Q_m$ tending to infinity with respect to the decreasing sequence $\varepsilon_m$ tending to zero. But then at each step we would have to apply Lemma~\ref{multi} with $\delta=\gamma_m$ and the first part of condition~(\ref{thrii}) would read $Q_m\Psi_{\alpha}(Q_m)(\varepsilon_m+\gamma_m)\leq 1$. This condition cannot hold for all $m\in\N$: $Q_m\Psi_{\alpha}(Q_m)$ has to tend to infinity, while $\varepsilon_m+\gamma_m$ does not converge to zero (it is strictly bigger than $\varepsilon$, and actually converges to $2\varepsilon$). 

Now we will explain how a slight modification of this scheme (and therefore of the expected result) will lead us to the proof of Theorem~\ref{thminfini}. First, for $d=n$, a vector field $N$ which commutes with $X_\alpha$ has to be constant, that is $N=X_\beta$ for some $\beta\in \C^n$. Then the idea, which goes back to Arnold and which has been largely exploited by Moser (see \cite{Mos67}), is that instead of trying to conjugate $X=X_\alpha+P$ to a normal form $X_\alpha+X_\beta$, we will try to conjugate a modified vector field $X+X_\beta=X_\alpha+X_\beta+P$ to the constant vector field $X_\alpha$. This simple change of point of view will allow us to apply the averaging procedure described in Lemma~\ref{multi} infinitely many times and to prove the convergence easily. Technically speaking, at each step we will be able to apply Lemma~\ref{multi} with $\delta=2\varepsilon_m$ (and not with $\delta=\gamma_m$ as we did before), and the condition $Q_m\Psi_{\alpha}(Q_m)(\varepsilon_m+2\varepsilon_m)\leq 1$ can and will be fulfilled for all $m\in\N$. The last part of condition~(\ref{thrii}) will be used as a definition of $\sigma_m$, namely $\sigma_m$ will be essentially equal to $Q_m^{-1}$, and the arithmetic condition on $\alpha$ will ensure that the series of $Q_m^{-1}$, and therefore the series of $\sigma_m$, is finite and can be made as small as we wishes provided we choose $Q=Q_0$ sufficiently large. 

The only real difference with the scheme of the proof of Theorem~\ref{thmfini}, where everything was explicit, is that here the modifying vector field $X_\beta$ cannot be constructed explicitly, its existence is obtained by an implicit argument (exactly like a fixed point whose existence is obtained by Picard or Newton iterations). The modifying vector field $X_\beta$ cannot be determined in advance, at each step of the iteration it is only known at a certain precision which increases with the number of steps, so that it is only at the end of iteration that $\beta$ can be uniquely determined. To make things more precise, for a given $r\geq 0$, let
\[ B_r(\alpha)=\{ x \in \C^n \; | \; |\alpha-x|\leq r \}. \]
Then Lemma~\ref{multi} can be recast as follows.

\begin{lemma}\label{multi2}
Let $X$ be as in \eqref{H} with $d=n$, $Q>\max(1,d|\alpha|^{-1})$, $0<\sigma<s$ and assume that 
\begin{equation}\label{thr22}
3Q\Psi_{\alpha}(Q)\varepsilon\leq 1, \quad 2^5d^2(2^d-1)|\alpha|^{-1}Q^{-1}\sigma^{-1}\leq 1.
\end{equation}
Then there exist an embedding $\varphi : B_{\varepsilon}(\alpha)\rightarrow B_{2\varepsilon}(\alpha)$ and a real analytic embedding $\Phi : \T^n_{s-\sigma} \rightarrow \T^n_{s}$ such that for all $x\in B_{\varepsilon}(\alpha)$,
\[ \Phi^* (X_{\varphi(x)}+P)=X_{x}+P^+ \]
with the estimates 
\[ |\Phi-\mathrm{Id}|_{s-\sigma}\leq d^2|\alpha|^{-1}\Psi_{\alpha}(Q)\varepsilon, \quad |P^+|_{s-\sigma}\leq 2^{-1}\varepsilon. \] 
\end{lemma}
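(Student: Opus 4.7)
The plan is to reduce Lemma~\ref{multi2} to Lemma~\ref{multi}, exploiting two features specific to the case $d = n$. First, when $d = n$ one has $F_\alpha = \R^n$ and $\Lambda_\alpha = \Z^n$, so the quasi-periodic average $\overline{P}(\theta) = \int_{\T^n} P(\theta + \vartheta)\,d\vartheta$ is independent of $\theta$; thus $\overline{P} = X_{\bar\beta}$ for a unique constant vector $\bar\beta \in \C^n$, satisfying $|\bar\beta| \leq |\overline{P}|_s \leq \varepsilon$. Second, as remarked immediately after the statements of Lemma~\ref{one} and Lemma~\ref{multi}, the transformation $\Phi$ produced by Lemma~\ref{multi} depends only on $P$, and not on the auxiliary term $S$. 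Together these two observations let one build a single $\Phi$ from $P$ alone and absorb the constant $\bar\beta$ into a shift of frequency, producing a single conjugacy that works for every $x \in B_\varepsilon(\alpha)$ simultaneously.

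Concretely, I would define $\varphi(x) = x - \bar\beta$. This is a translation, hence an embedding, and $|\varphi(x) - \alpha| \leq |x - \alpha| + |\bar\beta| \leq 2\varepsilon$ shows that $\varphi$ sends $B_\varepsilon(\alpha)$ into $B_{2\varepsilon}(\alpha)$. Given $x \in B_\varepsilon(\alpha)$, set $\eta = \varphi(x) - \alpha$, so that $|\eta| \leq 2\varepsilon$ and
\[ X_{\varphi(x)} + P = X_\alpha + X_\eta + P. \]
Then apply Lemma~\ref{multi} to $Z = X_\alpha + X_\eta + P$ with $S = X_\eta$, $\delta = 2\varepsilon$, and $c = 1/2$; the three conditions~\eqref{thr2} become
\[ 3Q\Psi_\alpha(Q)\varepsilon \leq 1, \qquad 2Q\Psi_\alpha(Q)\varepsilon \leq 1, \qquad 2^5 d^2(2^d-1)|\alpha|^{-1}Q^{-1}\sigma^{-1} \leq 1, \]
all of which are implied by the two hypotheses~\eqref{thr22} of Lemma~\ref{multi2}. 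The lemma then produces $\Phi$ and $P^+$ with exactly the required estimates, and
\[ \Phi^*(X_{\varphi(x)} + P) = X_\alpha + X_\eta + \overline{P} + P^+ = X_{\alpha + \eta + \bar\beta} + P^+ = X_x + P^+, \]
since by construction $\alpha + \eta + \bar\beta = \alpha + (x - \alpha - \bar\beta) + \bar\beta = x$.

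The main obstacle, and really the only point beyond a direct citation of Lemma~\ref{multi}, is the fact that a single $\Phi$ must serve for every $x \in B_\varepsilon(\alpha)$; otherwise the statement of the lemma would not make sense. This reduces precisely to the independence of $\Phi$ from $S$ noted above: it is already visible from the explicit formula $V = T\int_0^1 (P - [P])\circ X_{T\omega}^t\,t\,dt$ of Lemma~\ref{one}, and propagates through the $d$-fold inductive construction of Lemma~\ref{multi} because each successive $V_j$ depends only on the averaged $P_{j-1}$ (itself determined by $P$ and the periodic approximations $\omega_j$). Once this is recognized, no further difficulty remains.
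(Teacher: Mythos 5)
Your proposal is correct and follows essentially the same route as the paper's own proof: define $\varphi$ as the translation by the constant vector representing $\overline{P}$, write $X_{\varphi(x)}+P=X_\alpha+X_{\varphi(x)-\alpha}+P$, and apply Lemma~\ref{multi} with $S=X_{\varphi(x)-\alpha}$, $\delta=2\varepsilon$, $c=1/2$, the hypotheses~\eqref{thr22} being exactly what makes~\eqref{thr2} hold. Your explicit justification that the single map $\Phi$ serves all $x$ (independence of the construction from $S$, traced back to the formula for $V$ in Lemma~\ref{one}) is precisely the point the paper makes in the remarks surrounding Lemmas~\ref{multi} and~\ref{multi2}.
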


\begin{proof}
Since $d=n$, we have $\overline{P}=X_\eta$ for some vector $\eta\in \C^n$, and as $|P|_s\leq \varepsilon$, then $|\eta|\leq \varepsilon$. So let us define $\varphi : B_{\varepsilon}(\alpha)\rightarrow B_{2\varepsilon}(\alpha)$ to be the translation 
\[ \varphi(x)=x-\eta, \quad x\in B_{\varepsilon}(\alpha).\]
Take any $x\in B_{\varepsilon}(\alpha)$, then $|\varphi(x)-\alpha|\leq |x-\alpha|+|\eta| \leq 2\varepsilon$ and we can write
\[ X_{\varphi(x)}+P=X_{\alpha}+X_{\varphi(x)-\alpha}+P \]
and by condition~\eqref{thr22}, we can apply Lemma~\ref{multi} with $S=X_{\varphi(x)-\alpha}$, $\delta=2\varepsilon$ and $c=1/2$, to find a real analytic embedding $\Phi : \T^n_{s-\sigma} \rightarrow \T^n_{s}$ such that 
\[ \Phi^*(X_{\varphi(x)}+P)=X_{\varphi(x)+\eta}+P^+=X_{x}+P^+ \]
with the estimates 
\[ |\Phi-\mathrm{Id}|_{s-\sigma}\leq d^2|\alpha|^{-1}\Psi_{\alpha}(Q)\varepsilon, \quad |P^+|_{s-\sigma}\leq 2^{-1}\varepsilon. \] 
This was the statement to prove.
\end{proof}

Let us point out that the transformation $\Phi$ in the above lemma is indeed independent of the choice of $y\in B_{2\varepsilon}(\alpha)$, simply because in Lemma~\ref{multi} the transformation is independent of the choice of $S$. To understand this more precisely, let us notice that the transformation in Lemma~\ref{multi} does not depend directly on $\alpha$, but on the choice (for a given $Q$ sufficiently large) of $n$ periodic vectors $\omega_j$ for which
\[ |\alpha-\omega_j|\leq d(|\alpha|T_j Q)^{-1}, \quad |\alpha|^{-1}\leq T_j \leq d|\alpha|^{-1}\Psi_{\alpha}(Q). \]
Now the point is that if $y$ is sufficiently close to $\alpha$, one can choose the same periodic vectors and consequently the same transformation: the first part of condition~\eqref{thr22} implies (in particular) that 
\[ |y-\alpha|\leq 2\varepsilon \leq (Q\Psi_\alpha(Q))^{-1} \leq d(|\alpha|T_j Q)^{-1}  \]
so that for all $1\leq i \leq n$,
\[ |y-\omega_i|\leq |y-\alpha|+|\alpha-\omega_i|\leq 2d(|\alpha|T_j Q)^{-1}. \]
Hence the same transformation can be used if we replace $\alpha$ by $y$, and the same estimates can be obtained (by imposing slightly stronger assumptions, as it is done in the first part of condition~\eqref{thr22}, to compensate the factor $2$ in the last inequality).   

Now it will be easy to use Lemma~\ref{multi2} infinitely many times to prove Theorem~\ref{thminfini}. At the beginning, $y$ can lie anywhere in $B_{2\varepsilon}(\alpha)$. After one step, if $\overline{P}=X_\eta$, then $y$ has to belong to the image of the map $\varphi$ defined above, and this image is a smaller ball, namely $B_{\varepsilon}(-\eta)$. After $m$ steps, $y$ has to belong to a ball of radius $\varepsilon_{m-1}=2^{m-1}\varepsilon$ and so at the limit, $y=\alpha+\beta$ for some unique $\beta \in \C^n$ with $|\beta|\leq 2\varepsilon$.

\begin{proof}[Proof of Theorem~\ref{thminfini}]
Let $C_{d,\alpha}=2^5d^2|\alpha|^{-1}(2^d-1)$ and $\kappa_{d,\alpha}=2C_{d,\alpha}$. For any $m\in\N$, and for $Q>\max(1,d|\alpha|^{-1})$ to be chosen below, let us define the monotonic sequences
\[ \varepsilon_m=2^{-m}\varepsilon, \quad \Delta_m=2^{m}\Delta_{\alpha}(Q), \quad Q_m=\Delta_{\alpha}^*(\Delta_m). \]
Let also $\varepsilon_{-1}=2\varepsilon$, $s_0=s$ and for $m\in\N$, define 
\[  \sigma_m=C_{d,\alpha}Q_m^{-1}, \quad s_{m+1}=s_m-\sigma_m. \]
With these choices, since $\Delta_m\varepsilon_m=\Delta_0\varepsilon_0$ for all $m\in\N$, the first part of the condition
\begin{equation}\label{thrim}
3\Delta_m\varepsilon_m\leq 1, \quad C_{d,\alpha}Q_m^{-1}\sigma_{m}^{-1}\leq 1,
\end{equation}
is satisfied for any $m\in \N$ provided $3\Delta_0\varepsilon_0=3\Delta_{\alpha}(Q)\varepsilon\leq 1$, while the second part is always satisfied by definition of $\sigma_m$. Hence we choose $Q=\Delta_{\alpha}^*\left((3\varepsilon)^{-1}\right)$, and the requirement that $Q>\max(1,d|\alpha|^{-1})$ is ensured by the first part of our threshold~(\ref{thr000}) in the statement of Theorem~\ref{thminfini}. Now note that
\[ Q_m\Psi_{\alpha}(Q_m)=\Delta_{\alpha}(Q_m)=\Delta_{\alpha}(\Delta_{\alpha}^*(\Delta_m))\leq \Delta_m, \]
hence the condition~(\ref{thrim}) implies
\begin{equation}\label{thrimm}
3Q_m\Psi_{\alpha}(Q_m)\varepsilon_m\leq 1, \quad C_{d,\alpha}Q_m^{-1}\sigma_{m}^{-1}\leq 1.
\end{equation}
We claim that for any $m\in\N$, there exist an embedding $\varphi^m : B_{\varepsilon_{m-1}}(\alpha)\rightarrow B_{2\varepsilon}(\alpha)$ and a real analytic embedding $\Phi^m : \T^n_{s_{m}} \rightarrow \T^n_{s}$ such that for all $x_m\in B_{\varepsilon_{m-1}}(\alpha)$,
\[ (\Phi^m)^* (X_{\varphi^m(x_m)}+P)=X_{x_m}+P_m \]
with the estimates 
\begin{equation}\label{estim}
|\Phi^m-\mathrm{Id}|_{s_m}\leq 3^{-1}d^2|\alpha|^{-1}\sum_{i=0}^{m-1}Q_i^{-1}, \quad |P_{m}|_{s_{m}}\leq \varepsilon_{m}. 
\end{equation}
Indeed, for $m=0$, choosing $\varphi^0$ and $\Phi^0$ to be the identity, and $P_0=P$, there is nothing to prove. If we assume that the statement holds true for some $m\in\N$, then by \eqref{thrimm} we can apply Lemma~\ref{multi2} to the resulting vector field and an embedding $\varphi_{m+1} : B_{\varepsilon_{m}}(\alpha)\rightarrow B_{\varepsilon_{m-1}}(\alpha)$ and a real analytic embedding $\Phi_{m+1} : \T^n_{s_{m+1}} \rightarrow \T^n_{s_m}$ are constructed. It is then sufficient to let $\varphi^{m+1}=\varphi^m\circ \varphi_{m+1}$, $\Phi^{m+1}=\Phi^m\circ \Phi_{m+1}$ and $P_{m+1}=P_m^+$. The estimate $|P_{m+1}|_{s_{m+1}}\leq \varepsilon_{m+1}$ is obvious, and since $3Q_m\Psi_{\alpha}(Q_m)\varepsilon_m\leq 1$, we have 
\[|\Phi_{m+1}-\mathrm{Id}|_{s_{m+1}}\leq d^2|\alpha|^{-1}\Psi_{\alpha}(Q_m)\varepsilon_m\leq 3^{-1}d^2|\alpha|^{-1}Q_m^{-1}  \]
and the estimate for $\Phi^{m+1}$ follows.

Now let us prove that
\begin{equation}\label{limit}
\lim_{m \rightarrow +\infty}\varepsilon_m=0, \quad \lim_{m \rightarrow +\infty}s_m\geq s/2. 
\end{equation}
The first assertion is obvious. For the second one, $Q_m=\Delta_{\alpha}^*(\Delta_m)=\Delta_{\alpha}^*\left(2^{m}\Delta_{\alpha}(Q)\right)$ hence
\[ \sum_{m\geq 1}Q_m^{-1}\leq \int_{0}^{+\infty}\frac{dy}{\Delta_{\alpha}^*\left(2^{y}\Delta_{\alpha}(Q)\right)}=(\ln 2)^{-1}\int_{\Delta_{\alpha}(Q)}^{+\infty}\frac{dx}{x\Delta_{\alpha}^*(x)} \]
and therefore, using the second part of our condition~(\ref{thr000}) in the statement of Theorem~\ref{thminfini} and the fact that $Q_0=Q$, we obtain
\begin{equation}\label{limit2}
\sum_{m\geq 0}Q_m^{-1}\leq Q^{-1}+ (\ln 2)^{-1}\int_{\Delta_{\alpha}(Q)}^{+\infty}\frac{dx}{x\Delta_{\alpha}^*(x)} \leq \kappa_{d,\alpha}^{-1}s 
\end{equation}
and by the definition of $\sigma_m$, this gives
\[ \sum_{m\geq 0}\sigma_m=C_{d,\alpha}\sum_{m\geq 0}Q_m^{-1}\leq C_{d,\alpha}\kappa_{d,\alpha}^{-1}s=s/2. \]
This implies that
\[ \lim_{m \rightarrow +\infty}s_m=s-\sum_{m\geq 0}\sigma_m \geq s/2. \]
From the first part of~\eqref{limit}, when $m$ goes to infinity, $x_m$ converges to $\alpha$ and therefore $\varphi^m$ converges to a trivial map $\varphi : \{\alpha\} \rightarrow B_{2\epsilon}(\alpha)$. From~\eqref{estim} and~\eqref{limit}, $P_m$ converges to zero uniformly on every compact subsets of $\T^n_{s/2}$, while $\Phi^m$ converges to a embedding $\Phi : \T^n_{s/2} \rightarrow \T^n_s$, uniformly on every compact subsets of $\T^n_{s/2}$. Since the space of real-analytic functions is closed for the topology of uniform convergence on compact subsets, $\Phi$ is real-analytic, and from~\eqref{limit2}, we obtain the estimate
\[ |\Phi-\mathrm{Id}|_{s/2}\leq 3^{-1}d^{2}|\alpha|^{-1}\sum_{m\geq 0}Q_m^{-1} \leq 3^{-1}d^{2}|\alpha|^{-1}\left(Q^{-1}+(\ln 2)^{-1}\int_{\Delta_{\alpha}(Q)}^{+\infty}\frac{dx}{x\Delta_{\alpha}^*(x)}\right).\]
Finally, if $\beta \in \C^n$ is the unique vector such that $\varphi(\alpha)=\alpha+\beta$, then $|\beta|\leq 2\varepsilon$ and we have 
\[  \Phi^*(X+X_\beta)=X_\alpha. \]
This was the statement to prove. 
\end{proof}

\appendix

\section{Bruno-Rüssmann condition}\label{app1}

Recall that the set of Bruno-Rüssmann vectors $\mathcal{B}_d$ was defined in \S\ref{subseccordio}, and in \S\ref{sec3} we introduced another condition~(\ref{condA}). The aim of this short appendix is to prove the following proposition.

\begin{lemma}\label{lemmeapp}
A vector $\alpha$ satisfies condition~(\ref{condA}) if and only if it belongs to $\mathcal{B}_d$. 
\end{lemma}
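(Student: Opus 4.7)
My plan is to show that both conditions reduce to the convergence of the same positive series
\[ \Sigma := \sum_{\ell \geq 1} \frac{\ln(\psi_\ell/\psi_{\ell-1})}{Q_\ell}, \]
where, by Proposition~\ref{proppsietpsiprime}, $(Q_\ell)_{\ell \geq 1}$ is the increasing sequence of discontinuity points of $\Psi_d$, $\psi_0$ denotes the value of $\Psi_d$ on $[1, Q_1]$, and $\psi_\ell$ denotes its value on $]Q_\ell, Q_{\ell+1}]$; the sequence $(\psi_\ell)_{\ell\geq 0}$ is non-decreasing with limit $+\infty$, so every summand above is non-negative and $\Sigma \in [0,+\infty]$ is well-defined.

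For the $\mathcal{B}_d$ side, Corollary~\ref{corbr} rephrases membership in $\mathcal{B}_d$ as the existence of a continuous, non-decreasing, unbounded $\Phi \geq \Psi_d$ with $\int_1^{+\infty} Q^{-2}\ln\Phi(Q)\, dQ < +\infty$. Since $\Psi_d$ is piecewise constant, a direct computation yields
\[ \int_1^{+\infty} Q^{-2}\ln\Psi_d(Q)\, dQ = \ln\psi_0\bigl(1 - 1/Q_1\bigr) + \sum_{\ell \geq 1} \ln\psi_\ell\bigl(1/Q_\ell - 1/Q_{\ell+1}\bigr). \]
Expanding $\ln\psi_\ell = \ln\psi_0 + \sum_{k=1}^\ell \ln(\psi_k/\psi_{k-1})$, swapping summations (free of charge by Fubini for non-negative terms) and applying the telescoping identity $\sum_{\ell \geq k}(1/Q_\ell - 1/Q_{\ell+1}) = 1/Q_k$, this collapses to $\ln\psi_0 + \Sigma$. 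Hence $\Psi_d \leq \Phi$ gives one direction immediately; for the converse, given $\Sigma < +\infty$, I would construct an explicit admissible $\Phi$ by bridging each jump of $\Psi_d$ with a short linear ramp from $\psi_{\ell-1}$ to $\psi_\ell$ on a small interval to the right of $Q_\ell$, preserving $\Phi \geq \Psi_d$ and affecting the logarithmic integral only by a bounded multiple of $\Sigma$.

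For condition~(\ref{condA}) I would unfold $\Delta_\alpha$ and its generalized inverse. Setting $Q_0 := 1$, one has $\Delta_\alpha(Q) = Q\psi_{\ell-1}$ on $]Q_{\ell-1}, Q_\ell]$, with a jump at $Q_\ell$ from $Q_\ell\psi_{\ell-1}$ to $Q_\ell\psi_\ell$. From the definition $\Delta_\alpha^*(x) = \sup\{Q \geq 1 : \Delta_\alpha(Q) \leq x\}$ and the monotonicity of $\Delta_\alpha$, it follows that $\Delta_\alpha^*(x) = x/\psi_{\ell-1}$ on $[Q_{\ell-1}\psi_{\ell-1}, Q_\ell\psi_{\ell-1}]$, while $\Delta_\alpha^*(x) \equiv Q_\ell$ on the plateau $]Q_\ell\psi_{\ell-1}, Q_\ell\psi_\ell]$ corresponding to the jump. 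Splitting the integral along these intervals gives, for each $\ell \geq 1$,
\[ \int_{Q_{\ell-1}\psi_{\ell-1}}^{Q_\ell\psi_{\ell-1}}\frac{dx}{x \cdot (x/\psi_{\ell-1})} = \frac{1}{Q_{\ell-1}} - \frac{1}{Q_\ell}, \qquad \int_{Q_\ell\psi_{\ell-1}}^{Q_\ell\psi_\ell}\frac{dx}{x\,Q_\ell} = \frac{\ln(\psi_\ell/\psi_{\ell-1})}{Q_\ell}, \]
and summing over $\ell$ the first pieces telescope to $1/Q_0 = 1$, while the second pieces are precisely $\Sigma$, so
\[ \int_{\Delta_\alpha(1)}^{+\infty}\frac{dx}{x\,\Delta_\alpha^*(x)} = 1 + \Sigma. \]
Consequently (A) too is equivalent to $\Sigma < +\infty$, and the lemma follows.

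The only non-routine step is producing a continuous, non-decreasing, unbounded $\Phi \geq \Psi_d$ with finite logarithmic integral once $\Sigma < +\infty$; this is the mild obstacle, and it is resolved by the explicit linear interpolation across jumps sketched above (which is essentially the content of the remark following Corollary~\ref{corbr}). Everything else is piecewise integration of step functions and a Fubini rearrangement of a non-negative series.
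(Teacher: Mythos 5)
Your route is genuinely different from the paper's, and its two main computations are correct. The paper first builds a continuous $\Phi$ squeezed between $\Psi_d(Q)$ and $\Psi_d(Q+1)$, and then passes from condition~(\ref{condA}) to the integral $\int Q^{-2}\ln\Phi(Q)\,dQ$ abstractly, via a Stieltjes change of variables $x=\Delta_\Phi(Q)$ and an integration by parts; it never looks at the jump data of $\Psi_d$. You instead evaluate both integrals in closed form against the jump data: the identity $\int_1^{+\infty}Q^{-2}\ln\Psi_d(Q)\,dQ=\ln\psi_0+\Sigma$ (Fubini on a non-negative double series plus telescoping) and the identity $\int_{\Delta_\alpha(1)}^{+\infty}\frac{dx}{x\,\Delta_\alpha^*(x)}=1+\Sigma$ are both right, and your description of $\Delta_\alpha^*$ (equal to $x/\psi_{\ell-1}$ on the sloped ranges and constant equal to $Q_\ell$ on the plateaus created by the jumps of $\Psi_d$) is accurate. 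What your approach buys is an explicit common invariant, the Bruno-type series $\Sigma=\sum_\ell Q_\ell^{-1}\ln(\psi_\ell/\psi_{\ell-1})$, which makes the equivalence transparent and quantitative; what the paper's approach buys is that it avoids any bookkeeping with the sequence $(Q_\ell,\psi_\ell)$ and works only with monotonicity.

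The one step that is wrong as stated is the construction of the admissible $\Phi$ from $\Sigma<+\infty$. Since $\Psi_d$ is left-continuous and equals $\psi_\ell$ on the whole interval $]Q_\ell,Q_{\ell+1}]$, any continuous non-decreasing $\Phi\geq\Psi_d$ must already satisfy $\Phi(Q_\ell)\geq\psi_\ell$; a linear ramp from $\psi_{\ell-1}$ to $\psi_\ell$ placed on a small interval to the \emph{right} of $Q_\ell$ lies strictly below $\Psi_d$ on that interval, so it does not preserve $\Phi\geq\Psi_d$. The fix is to put the ramp immediately to the \emph{left} of each jump point: keep $\Phi=\psi_{\ell-1}$ on $]Q_{\ell-1},Q_\ell-\epsilon_\ell]$ and interpolate linearly from $\psi_{\ell-1}$ to $\psi_\ell$ on $[Q_\ell-\epsilon_\ell,Q_\ell]$ (this is exactly the paper's construction, with $\epsilon_\ell\leq 1$ so that moreover $\Phi(Q)\leq\Psi_d(Q+1)$). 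With this placement your quantitative claim survives: choosing $\epsilon_\ell\leq\min(1,\,Q_\ell-Q_{\ell-1},\,Q_\ell/2)$, the excess of $\int Q^{-2}\ln\Phi$ over $\int Q^{-2}\ln\Psi_d$ is at most $\sum_\ell \epsilon_\ell\,(Q_\ell-\epsilon_\ell)^{-2}\ln(\psi_\ell/\psi_{\ell-1})\leq 4\Sigma$, and $\Phi$ is continuous, non-decreasing, unbounded and $\geq\Psi_d$, as required by Corollary~\ref{corbr}. With that correction the argument is complete.
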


In the proof, we shall make use of integration by parts and change of variables formulas for Stieltjes integral. 

\begin{proof}
First let us prove that there exists a continuous, non-decreasing and unbounded function $\Phi : [1,+\infty[ \rightarrow [1,+\infty[$ such that 
\begin{equation}\label{comp}
\Psi_\alpha(Q) \leq \Phi(Q) \leq \Psi_\alpha(Q+1), \quad Q\geq 1. 
\end{equation}
Recall from Proposition~\ref{proppsietpsiprime} that $\Psi_\alpha=\Psi_d$ is left-continuous and constant on each interval $]Q_l,Q_{l+1}]$, $l\in\N^*$ and on $[1,Q_1]$ with $Q_1 \geq 1$. For any $l\in\N^*$, let us choose a point $b_l\in]Q_l,Q_{l+1}[ \cap [Q_{l+1}-1,Q_{l+1}[$ and  define
\begin{equation*}
\Phi_l(Q)=
\begin{cases}
\Psi_\alpha(Q_{l+1}), \quad Q\in]Q_l,b_l],\\
\Psi_\alpha(Q_{l+1})+(Q-b_l)(Q_{l+1}-b_l)^{-1}\left(\Psi_\alpha(Q_{l+2})-\Psi_\alpha(Q_{l+1})\right), \quad Q\in]b_l,Q_{l+1}].
\end{cases}
\end{equation*} 
If $Q_1>1$, we choose a point $b_0\in [1,Q_{1}[ \cap [Q_{1}-1,Q_{1}[$ and define $\Phi_0$ on $[1,Q_1]$ similarly, otherwise if $Q_1=1$ we simply define $\Phi_0(1)=\Psi_{\alpha}(1)$, so that finally the function $\Phi$ defined by 
\[ \Phi=\Phi_0 \mathbf{1}_{[1,Q_{1}]}+\sum_{l\in\N^*}\Phi_l \mathbf{1}_{]Q_l,Q_{l+1}]}\] 
has all the wanted properties. The existence of such a function $\Phi$ shows that in Corollary \ref{corbr}, the integral condition may be written equivalently in terms of $\Psi_{\alpha}$, or also in terms of this function $\Phi$. 

Now let $\Delta_\Phi(Q)=Q\Phi(Q)$ and $\Delta_\Phi^{-1}$ denotes the functional inverse of $\Delta_\Phi$. From~\eqref{comp}, we deduce the following inequalities:
\begin{equation*}\
\Delta_\alpha(Q) \leq \Delta_\Phi(Q) \leq \Delta_\alpha(Q+1), \quad Q\geq 1. 
\end{equation*}
\begin{equation*}
\Delta^*_\alpha(x)-1 \leq \Delta^{-1}_\Phi(x) \leq \Delta^*_\alpha(x), \quad x\geq \Delta_{\alpha}(1). 
\end{equation*}
\begin{equation}\label{comp4}
\frac{1}{\Delta^*_\alpha(x)} \leq \frac{1}{\Delta^{-1}_\Phi(x)} \leq \frac{1}{\Delta^*_\alpha(x)-1}, \quad x> \Delta_{\alpha}(1). 
\end{equation}
From~\eqref{comp4} it follows that $\alpha$ satisfies condition~\eqref{condA} if and only if
\begin{equation}\label{eqint2}
\int_{\Delta_{\Phi}(1)}^{+\infty}\frac{dx}{x\Delta_{\Phi}^{-1}(x)}<\infty.
\end{equation}
By a change of variables, letting $Q=\Delta_{\Phi}^{-1}(x)$, (\ref{eqint2}) is equivalent to 
\begin{equation}\label{eqint5}
\int_{1}^{+\infty}\frac{d\Delta_{\Phi}(Q)}{Q\Delta_{\Phi}(Q)}<\infty.
\end{equation}
Now for $t=\Delta_{\Phi}^{-1}(1)$ and $T>t$, an integration by parts gives
\begin{equation}\label{eqint4}
T^{-1}\ln\Delta_{\Phi}(T)+\int_{t}^{T}Q^{-2}\ln(\Delta_{\Phi}(Q))dQ=\int_{t}^T \frac{d\Delta_{\Phi}(Q)}{Q\Delta_{\Phi}(Q)}
\end{equation}
and as $T^{-1}\ln\Delta_{\Phi}(T)>0$, letting $T$ goes to infinity in~\eqref{eqint4}, condition~(\ref{eqint5}) implies
\begin{equation}\label{eqint3}
\int_{1}^{+\infty}Q^{-2}\ln(\Delta_{\Phi}(Q))dQ<\infty.
\end{equation}
Now, using the fact that $\Delta_{\Phi}$ is increasing and assuming that~\eqref{eqint3} holds true, we also have
\begin{equation*}
T^{-1}\ln\Delta_{\Phi}(T)=\int_{T}^{+\infty}Q^{-2}\ln(\Delta_{\Phi}(T))dQ \leq \int_{T}^{+\infty}Q^{-2}\ln(\Delta_{\Phi}(Q))dQ 
\end{equation*}
and therefore letting $T$ goes to infinity in~(\ref{eqint4}), condition~(\ref{eqint3}) implies condition~(\ref{eqint5}). So~(\ref{eqint5}) and~(\ref{eqint3}) are in fact equivalent. Since $\ln\Delta_{\Phi}(Q)=\ln Q+\ln\Phi(Q)$, \eqref{eqint3} is clearly equivalent to
\begin{equation}\label{eqint1}
\int_{1}^{+\infty}Q^{-2}\ln(\Phi(Q))dQ<\infty.
\end{equation}
Hence conditions~\eqref{condA}, \eqref{eqint2}, \eqref{eqint5}, \eqref{eqint3}, \eqref{eqint1} and $\alpha\in \mathcal{B}_d$ are all equivalent, and this ends the proof.
\end{proof}

\section{Technical estimates}\label{app2}

In this second appendix, we derive technical estimates concerning the Lie series method for vector fields. These are well-known (see \cite{Fas90} for instance, where this formalism was first used as far as we aware of), but for completeness we prove the estimates adapted to our need.

\begin{lemma}\label{tech1}
Let $V$ be a bounded real-analytic vector field on $\T^n_s$, $0<\varsigma<s$ and $\tau=\varsigma |V|_{s}^{-1}$. For $t\in \C$ such that $|t|< \tau$, the map $V^t : \T^n_{s-\varsigma} \rightarrow \T^n_s$ is a well-defined real-analytic embedding, and we have 
\[ |V^t-\mathrm{Id}|_{s-\varsigma}\leq |V|_s, \quad |t|<\tau. \]
Moreover, $V^t$ depends analytically on $t$, for $|t|<\tau$.
\end{lemma}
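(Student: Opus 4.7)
My plan is to realize $V^t(z_0)$ as the solution at complex time $t$ of the ODE $\dot z = V(z)$ with $z(0) = z_0$, and to combine short-time existence (via Picard iteration) with an a priori displacement bound that bootstraps existence all the way to $|t|<\tau$.

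\emph{Step 1 (short-time existence).} I would set up Picard iterates $\Phi_0(z_0) = z_0$ and
$$\Phi_{k+1}(t,z_0) = z_0 + \int_0^t V(\Phi_k(s,z_0))\,ds,$$
with the integral taken along the straight segment from $0$ to $t$. Viewing these as holomorphic maps $\T^n_{s-\varsigma}\to \T^n_s$, Cauchy's estimates give the Lipschitz bound $|DV|_{s-\varsigma/2}\leq 2|V|_s/\varsigma$ on a slightly smaller strip, so the Picard operator is a contraction in the sup-norm for $|t|$ in some small disk $|t|<\tau_0$. Its fixed point yields a real-analytic flow map $V^t$ there, jointly analytic in $(t,z_0)$ since each iterate is and convergence is uniform on compacts.

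\emph{Step 2 (bootstrap to the full disk $|t|<\tau$).} The key estimate comes directly from the integral equation: as long as the orbit $V^s(z_0)$ remains in $\T^n_s$ for $s\in [0,t]$,
$$\bigl|V^t(z_0) - z_0\bigr| = \Bigl|\int_0^t V(V^s(z_0))\,ds\Bigr| \leq |t|\cdot |V|_s.$$
For $|t|<\tau = \varsigma|V|_s^{-1}$ the right-hand side is strictly less than $\varsigma$, so since $z_0\in\T^n_{s-\varsigma}$, the orbit in fact stays inside $\T^n_s$. A standard maximality/continuity argument (the set of $t$ in the disk $|t|<\tau$ for which the flow is defined is both open and closed) then extends $V^t$ to the full disk. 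This simultaneously delivers the displacement estimate: $|V^t - \mathrm{Id}|_{s-\varsigma}\leq |t|\cdot|V|_s$, which is $\leq |V|_s$ in the regime $|t|\leq 1$ relevant to Lemma~\ref{one} (where $t=1$).

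\emph{Step 3 (embedding and analytic dependence).} Analyticity of $V^t$ in $t$ is inherited from the Picard construction. For the embedding property I would construct $V^{-t}$ in parallel, on a strip of width $s-2\varsigma$ (so that its image lies in $\T^n_{s-\varsigma}$), using the same bootstrap; uniqueness of solutions to the ODE then yields $V^{-t}\circ V^t = \mathrm{Id}$ on $\T^n_{s-2\varsigma}$, giving injectivity. Invertibility of the Jacobian on $\T^n_{s-\varsigma}$ follows from the Cauchy estimate $|DV^t - I|_{s-\varsigma}\lesssim |t|\cdot |V|_s/\varsigma < 1$, applied after shrinking the strip slightly.

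The only mildly delicate point is the domain bookkeeping in Step 3: the nested strips $s \supset s-\varsigma \supset s-2\varsigma$ must be tracked when composing $V^{-t}$ with $V^t$, and the Cauchy estimate for $DV^t - I$ requires one more loss. This is routine and is the standard way to derive embedding properties of flow maps from $C^0$ displacement bounds, so I expect no real obstacle.
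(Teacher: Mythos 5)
Your proposal is correct in outline and takes essentially the same approach as the paper, which simply invokes the standard existence and uniqueness theory for analytic ODEs together with the integral form $V^t=\mathrm{Id}+\int_0^t V\circ V^u\,du$; your Picard iteration and bootstrap just make that appeal explicit, and you correctly notice that the displacement bound $|t|\,|V|_s$ only matches the stated bound $|V|_s$ for $|t|\leq 1$, the range actually used in the applications.

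However, Step 3 has a genuine gap in the injectivity argument. Constructing $V^{-t}$ on a strictly smaller strip and composing with $V^t$ only establishes $V^{-t}\circ V^t=\mathrm{Id}$ on $\T^n_{s-2\varsigma}$ (in fact on $\T^n_{s-3\varsigma}$ once one tracks carefully where the composition is actually defined), not on the full domain $\T^n_{s-\varsigma}$ where the lemma asserts the embedding, and shrinking the strip cannot recover this loss: the ``inverse flow on a smaller strip'' route inherently gives up width. The clean argument for injectivity on all of $\T^n_{s-\varsigma}$ is backward uniqueness: if $z_1,z_2\in\T^n_{s-\varsigma}$ satisfy $V^t(z_1)=V^t(z_2)$, then the two analytic curves $u\mapsto V^u(z_i)$ stay in $\T^n_s$ for $u$ on the segment from $0$ to $t$ and agree at $u=t$, so by uniqueness for analytic ODEs and a continuation argument they agree throughout, giving $z_1=z_2$. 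Likewise, rather than a Cauchy estimate on a shrunken strip, invertibility of $DV^t$ on all of $\T^n_{s-\varsigma}$ follows directly from the variational equation $\tfrac{d}{du}DV^u=DV(V^u)\,DV^u$ with $DV^0=I$, whose solution (being a linear flow) has nonvanishing determinant for all $u$.
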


\begin{proof} This is a direct consequence of the existence theorem for analytic differential equations and the analytic dependence on the initial condition: on the domain $\T^n_{s-\varsigma}$, for $|t|<\tau$, $V^t$ is well-defined, depends analytically on $t$ and satisfies the equality
\[ V^t=\mathrm{Id}+\int_{0}^{t}V\circ V^u du. \]
The statement follows.
\end{proof}

\begin{lemma}\label{tech2}
Let $X$ and $V$ be two bounded real-analytic vector fields on $\T^n_s$, and $0<\varsigma<s$. Then
\[ |[X,V]|_{s-\varsigma}\leq 2\varsigma^{-1}|X|_s|V|_s. \]
\end{lemma}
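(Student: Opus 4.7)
The plan is to prove Lemma~\ref{tech2} by a standard Cauchy estimate, since the Lie bracket of two vector fields is a first-order differential operator and the loss of domain $\varsigma$ is precisely what allows us to control a derivative of a bounded holomorphic function.

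First I would write the Lie bracket component-wise as
\[ [X,V] = DV\cdot X - DX\cdot V, \]
where $DV$ and $DX$ denote the Jacobian matrices of $V$ and $X$ (viewed as $\C^n$-valued holomorphic maps on $\T^n_s$). Since the sup-norm $|\cdot|_s$ is the max-over-components max-over-domain norm, it suffices to estimate each component of $DV\cdot X$ and $DX\cdot V$ separately on the smaller domain $\T^n_{s-\varsigma}$.

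Next I would apply the one-dimensional Cauchy integral formula in each complex variable $\theta_j$: for any component $V_i$ of $V$, holomorphic and bounded by $|V|_s$ on $\T^n_s$, the partial derivative satisfies $|\partial_{\theta_j} V_i|_{s-\varsigma} \leq \varsigma^{-1} |V|_s$ on the shrunk strip, because a disc of radius $\varsigma$ around any point of $\T^n_{s-\varsigma}$ lies inside $\T^n_s$. Consequently, for each component of the matrix-vector product $DV\cdot X$,
\[ \Big| (DV\cdot X)_i \Big|_{s-\varsigma} = \Big| \sum_{j} (\partial_{\theta_j} V_i) X_j \Big|_{s-\varsigma} \leq \varsigma^{-1}|V|_s \cdot |X|_s, \]
where I use that $|X|_s = \max_j |X_j|_s$ and the max-norm conventions fixed in \S\ref{ss30}. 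The same estimate with $X$ and $V$ exchanged bounds $|DX\cdot V|_{s-\varsigma}$ by $\varsigma^{-1}|X|_s|V|_s$. Adding the two contributions using the triangle inequality gives the claimed bound $|[X,V]|_{s-\varsigma} \leq 2\varsigma^{-1}|X|_s |V|_s$.

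There is no real obstacle here; the only delicate point is to make sure the Cauchy estimate is applied with the correct norm convention (max over coordinates and over the complex strip), so that no extra dimensional factor appears and the constant $2$ truly matches the two terms of the Lie bracket. The statement and its proof are completely independent of the periodic/Diophantine machinery of the paper.
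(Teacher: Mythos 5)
Your overall strategy (write $[X,V]=DV\cdot X-DX\cdot V$ and absorb the loss of width $\varsigma$ by a Cauchy estimate) is sound, but the decisive inequality does not follow from what you actually prove. After bounding each partial derivative separately, $|\partial_{\theta_j}V_i|_{s-\varsigma}\leq\varsigma^{-1}|V|_s$, the triangle inequality applied to $\sum_{j}(\partial_{\theta_j}V_i)X_j$ only yields $n\,\varsigma^{-1}|V|_s|X|_s$: with the paper's convention $|X|_s=\sup_{z}\max_j|X_j(z)|$ one has $\sum_j|X_j|\leq n|X|_s$, not $\leq |X|_s$. So the max-norm convention does not remove the dimensional factor --- it is exactly what produces it --- and your argument as written proves the lemma only with constant $2n\varsigma^{-1}$ in place of $2\varsigma^{-1}$. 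This is not purely cosmetic: the constant of Lemma~\ref{tech2} is propagated through the induction of Lemma~\ref{tech3} (where the smallness assumption $|V|_s\leq(4e)^{-1}\varsigma$ is calibrated on it) and into the thresholds of Lemmas~\ref{one} and~\ref{multi}.

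The missing idea is to estimate the directional derivative in one stroke rather than the individual partials. Fix $\theta\in\T^n_{s-\varsigma}$ and consider the one-variable holomorphic function $g(z)=V_i(\theta+zX(\theta))$; since $|X(\theta)|\leq|X|_s$ in the max norm, $g$ is defined and bounded by $|V|_s$ on the disc $|z|<\varsigma|X|_s^{-1}$, and Cauchy's estimate for $g'(0)=\sum_j\partial_{\theta_j}V_i(\theta)X_j(\theta)$ gives $|(DV\cdot X)_i(\theta)|\leq\varsigma^{-1}|X|_s|V|_s$ with no factor $n$; exchanging $X$ and $V$ and adding the two terms then gives the stated bound. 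This is the same mechanism as in the paper's proof, which instead composes with the flow: $\mathcal{L}_Vf$ is the derivative at $t=0$ of $t\mapsto f\circ V^t$, which by Lemma~\ref{tech1} is analytic in $t$ and bounded by $|f|_s$ on $\T^n_{s-\varsigma}$ for $|t|<\varsigma|V|_s^{-1}$, so a Cauchy estimate in the single complex variable $t$ yields $|\mathcal{L}_Vf|_{s-\varsigma}\leq\varsigma^{-1}|V|_s|f|_s$, applied then to the components $\mathcal{L}_XV_i-\mathcal{L}_VX_i$ of the bracket. With the directional (or flow) Cauchy estimate in place of your componentwise one, the rest of your argument goes through.
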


\begin{proof}
First consider a real-analytic function $f$ defined on $\T^n_s$, and let $\mathcal{L}_V f$ the Lie derivative of $f$ along $V$, that is
\[ \mathcal{L}_V f=\frac{d}{dt}(f\circ V^t)_{|t=0}=F'(0), \quad F(t)=f\circ V^t. \]
Now for $|t|<\tau=\varsigma |V|_{s}^{-1}$, by Lemma~\ref{tech1}, $F(t)$ is a well-defined real-analytic function on $\T^n_{s-\sigma}$ and moreover the map $F$ is analytic in $t$, hence by the classical Cauchy estimate 
\[ |\mathcal{L}_V f|_{s-\sigma}=|F'(0)|\leq \tau^{-1}\sup_{|t|<\tau}|f\circ V^t|_{s-\sigma}\leq  \tau^{-1} |f|_s =\varsigma^{-1} |V|_{s}|f|_s. \]
Similarly, we have
\[ |\mathcal{L}_X f|_{s-\sigma} \leq \varsigma^{-1} |X|_{s}|f|_s.  \]
Now, for $1\leq i \leq n$, if $X^i$ and $V^i$ are the components of $X$ and $V$, then $\mathcal{L}_X V_i-\mathcal{L}_V X_i$ are the components of  $[X,V]$, so each component of $[X,V]$ is bounded, on the domain $\T^n_{s-\sigma}$, by $2\varsigma^{-1}|X|_s|V|_s$ and therefore
\[ |[X,V]|_{s-\varsigma}\leq 2\varsigma^{-1}|X|_s|V|_s \]
which is the desired estimate.
\end{proof}

\begin{lemma}\label{tech3}
Let $X$ and $V$ be two bounded real-analytic vector fields on $\T^n_s$, and $0<\varsigma<s$. Assume that $|V|_s \leq (4e)^{-1}\varsigma$. Then for all $|t|\leq 1$, 
\[ |(V^t)^*X|_{s-\sigma}\leq 2|X|_s. \]
\end{lemma}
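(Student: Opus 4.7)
\medskip
\noindent\textbf{Plan.} The natural approach is to write $(V^t)^*X$ as a Lie (Taylor) series in $t$ and to control it by iterating Lemma~\ref{tech2}. Specifically, since the flow of $V$ satisfies $\frac{d}{dt}(V^t)^*X = (V^t)^*[V,X]$, a formal Taylor expansion at $t=0$ gives
\[
(V^t)^*X \;=\; \sum_{k\ge 0} \frac{t^k}{k!}\,\mathrm{ad}_V^k X,
\qquad \mathrm{ad}_V^k X = [V,[V,\ldots,[V,X]\ldots]]\ \text{($k$ brackets)}.
\]
I would first justify this expansion on $\T^n_{s-\varsigma}$ (for $|t|\leq 1$) by checking that the series is normally convergent there under the smallness assumption $|V|_s\le (4e)^{-1}\varsigma$; given the bounds below this is routine, but it is the one place where care is needed.

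\medskip
\noindent The heart of the estimate is an iterated application of Lemma~\ref{tech2}. Splitting the loss of analyticity $\varsigma$ into $k$ equal pieces of width $\varsigma/k$, each successive bracket costs a factor $2(\varsigma/k)^{-1}|V|_s$, so by induction
\[
\bigl|\mathrm{ad}_V^k X\bigr|_{s-\varsigma} \;\leq\; \Bigl(\frac{2k}{\varsigma}|V|_s\Bigr)^{k}|X|_s.
\]
The elementary bound $k^k \leq e^k\, k!$ (from Stirling) then gives
\[
|(V^t)^*X|_{s-\varsigma} \;\leq\; \sum_{k\geq 0} \frac{|t|^k}{k!}\Bigl(\frac{2k}{\varsigma}|V|_s\Bigr)^{k}|X|_s \;\leq\; \sum_{k\geq 0}\Bigl(\frac{2e|t|\,|V|_s}{\varsigma}\Bigr)^{k}|X|_s.
\]

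\medskip
\noindent Finally, the hypotheses $|t|\leq 1$ and $|V|_s \leq (4e)^{-1}\varsigma$ yield $2e|t|\,|V|_s/\varsigma \leq 1/2$, so the geometric series is bounded by $2$, giving the desired estimate $|(V^t)^*X|_{s-\varsigma}\leq 2|X|_s$. The only real subtlety is the first step — justifying that the Lie series actually represents the pullback on the whole strip $\T^n_{s-\varsigma}$ and not merely formally; this can be done by combining the a priori bound of Lemma~\ref{tech1} (which controls $V^t$ on strips of the appropriate width) with the uniform convergence of the series on compact subsets, so that the sum and the analytic function $(V^t)^*X$ agree by analytic continuation in $t$ from a neighbourhood of $0$.
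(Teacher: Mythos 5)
Your proposal is correct and follows essentially the same route as the paper: expand $(V^t)^*X$ as a Lie series, iterate Lemma~\ref{tech2} with the loss of width $\varsigma$ split into $k$ equal pieces to get $\bigl(2k\varsigma^{-1}|V|_s\bigr)^k|X|_s$, and then use $k^k\le e^k k!$ together with $|V|_s\le(4e)^{-1}\varsigma$ to dominate the sum by a geometric series of ratio at most $1/2$. The extra care you flag about justifying the convergence of the Lie series is a reasonable refinement of what the paper treats as a formal expansion, but it does not change the argument.
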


\begin{proof}
From the general identity
\[ \frac{d}{dt}(V^t)^*X=(V^t)^*[X,V] \]
we have the formal Lie series expansion
\[ (V^t)^*X=\sum_{n\in \N}(n!)^{-1}X_nt^n, \quad X_0=X, \quad X_{n}=[X_{n-1},V], \; n\geq 1. \]
Let $s_i=s-in^{-1}\varsigma$ for $1\leq i \leq n$, so that $s_0=s$ and $s_n=s-\varsigma$. Using Lemma~\ref{tech2}, we have
\[ |X_n|_{s-\varsigma}=|X_n|_{s_n}=|[X_{n-1},V]|_{s_n}\leq 2n\varsigma^{-1}|X_{n-1}|_{s_{n-1}}|V|_{s_{n-1}} \]
and by induction
\[ |X_n|_{s-\varsigma}\leq (2n\varsigma^{-1})^n|X_0|_{s_0}|V|_{s_0}^n=(2n\varsigma^{-1})^n|X|_{s}|V|_{s}^n. \]
By assumption, $|V|_s \leq (4e)^{-1}\varsigma$ so
\[ |X_n|_{s-\varsigma}\leq (2e)^{-n}n^n |X|_s \]
and therefore
\[ |(V^t)^*X|_{s-\sigma}\leq |X|_s \sum_{n\geq 0}(n!)^{-1}(2e)^{-n}(n|t|)^n. \]
Now for any $n\in\N^*$, $n!\geq n^ne^{-n}$, hence $(n!)^{-1}(2e)^{-n}(n|t|)^n \leq (2^{-1}|t|)^n$ and since $|t|\leq 1$, the above series is bounded by $2$. This proves the lemma.
\end{proof}

The above estimate can be easily improved, but for $t\neq 0$, the constant $2$ cannot be replaced by $1$.

 \newcommand{\url}{\texttt}
\addcontentsline{toc}{section}{References}
\bibliographystyle{amsalpha}
\bibliography{AS6}

\end{document}